\newtheorem{theorem}{Theorem}
\newtheorem{definition}[theorem]{Definition}
\newtheorem{lemma}[theorem]{Lemma}
\newtheorem{problem}[theorem]{Problem}
\newtheorem{proposition}[theorem]{Proposition}
\newtheorem{remark}[theorem]{Remark}
\newtheorem{maintheorem}{Result}
\newcommand\cL{{\mathcal L}}
\newcommand\cP{{\mathcal P}}
\newcommand\bN{{\mathbb N}}
\newcommand\bR{{\mathbb R}}
\newcommand\fB{{\mathfrak B}}
\newcommand\fH{{\mathfrak H}}
\newcommand{\norm}[1]{\left\Vert#1\right\Vert}
\DeclareMathOperator*{\esssup}{ess\, sup}
\DeclareMathOperator*{\essinf}{ess\, inf}
\title[Optimal response for SDE by local kernel perturbations]{Optimal response for stochastic differential equations by local kernel perturbations}
\author{Gianmarco Del Sarto\textsuperscript{1}, Stefano Galatolo\textsuperscript{2}, Sakshi Jain\textsuperscript{3, \Letter}}
\address{\textsuperscript{1}Class of Science,
Scuola Normale Superiore, Pisa - Italy,\\
Department of Science, Technology and Society, University School for Advanced Studies IUSS Pavia, Pavia - Italy\\
Department of Mathematics,
Technische Universität Darmstadt, Darmstadt - Germany.}
\address{ \textsuperscript{2}Dipartimento di Matematica,
 Centro Interdipartimentale per lo Studio dei Sistemi Complessi Universitá di Pisa, Pisa -Italy}
\address{\textsuperscript{3, \Letter}School of Mathematics,
Monash University, Melbourne - Australia.}
\email{\textsuperscript{1} delsarto@mathematik.tu-darmstadt.de}
\email{\textsuperscript{2} stefano.galatolo@unipi.it}
\email{\textsuperscript{3, \Letter} sakshi.jain@monash.edu}
\date{\today}
\begin{document}
\keywords{Stochastic differential equations, transfer operators, linear response}
\subjclass{Primary: 37H10, 37C30, Secondary: 37M05, 37N35, 49N45, 60H10}
\maketitle 
\begin{abstract} 
  We consider a random dynamical system on $\mathbb{R}^d$, whose dynamics is defined by a stochastic differential equation. The annealed transfer operator associated with such systems is a kernel operator. Given a set of feasible infinitesimal perturbations $P$ to this kernel, with support in a certain compact set, and a specified observable function $\phi: \mathbb{R}^d \to \mathbb{R}$, we study which infinitesimal perturbation in $P$ produces the greatest change in expectation of $\phi$. We establish conditions under which the optimal perturbation uniquely exists and present a numerical method to approximate the optimal infinitesimal kernel perturbation. Finally, we numerically illustrate our findings with concrete examples.
\end{abstract}

\tableofcontents

\section{Introduction}\label{sec:intro}
 The predictive understanding and the control of the statistical properties  of a  dynamical system is an important topic of study, with applications to  many different fields, so is understanding the change in these statistical properties after small changes in the initial dynamical system.
The concept of statistical stability of a dynamical system relates to the statistical properties of typical orbits of a dynamical system, which are in turn encoded into its invariant or stationary measures, and to how these properties change when the system is perturbed. We say that the system exhibits a linear response to the perturbation when such a change is differentiable (see Theorem \ref{thm:res_diff} for a formalization of this concept).
In this case, the long-time average of a given observable  changes smoothly during the perturbation. 

 The linear response for dynamical systems was first studied by Ruelle \cite{Ruelle97}, and then was studied extensively for different classes of systems having some form of hyperbolicity (see \cite{Baladi14} for a general survey). The linear response for stochastic differential equations (SDEs) was studied in \cite{HM10}, \cite{KLP} and \cite{CKB22}, where general response results were established.
In the context of discrete-time random systems, linear response results have been shown in \cite{GG19}, \cite{BRS20}  and \cite{AFG22}. The study of linear response has great importance in the applications, in particular to climate sciences (see \cite{GL20}, \cite{HM10}).

In the present article, we address a natural inverse problem related to linear response: The Optimal Response of a given observable. We consider a certain observable, defined on the phase space associated with our system and search for the optimal infinitesimal perturbation to apply to the system in order to maximize the observable's expectation.
The understanding of this problem also has evident importance in the applications, as it is a formalization of the natural question of ``how to manage the system in such a way that its statistical properties change in a wanted direction'', hence an optimal control problem for the statistical properties of the system \cite{AFG22,BLL20, HM10,GL20, M18}. 

The optimal response problem for a fixed observable as described above was studied for the first time in \cite{ADF18}, for finite-state Markov chains.
Then the case of dynamical systems  whose transfer operators are kernel operators (including random dynamical systems that have additive noise) was considered in \cite{AFG22}.
The case of one-dimensional deterministic expanding circle maps with deterministic perturbations was studied in \cite{FG23}.
The above articles also consider the problem of optimizing the spectral gap and hence the speed of mixing. In \cite{G22}, an optimal response problem is considered where optimal coupling is studied in the context of mean-field coupled systems.
A problem strictly   related to the optimal response is the `linear request problem', which focuses on the search for a perturbation achieving a prescribed response direction \cite{BLL20,GP17,GG19,M18,K18}.
All of these studies are  in terms of understanding how a system can be modified to control the behavior of its statistical properties.\\

  Stochastic differential equations and random dynamical systems are widely used as models of climate and fluid dynamics related phenomena.  This strongly motivates the study of the optimal response for such systems.
  As mentioned above, the  optimal response  and the control of the statistical properties for discrete time  random dynamical systems  whose associated transfer operator is a kernel operator is studied in \cite{AFG22} and \cite{GG19}. In these papers, the phase space considered was compact.
   Since the transfer operator associated with a stochastic differential equation is a kernel operator (see Theorem \ref{MPZ} for a precise statement and estimates on the regularity of the kernel).
The results of these papers hence apply to the case of time discretizations of stochastic differential equations over compact spaces. The extension of these results to non-compact phase space is a non-trivial task, which requires the use of suitable functional spaces, in order to recover the ``compact immersion''-like  functional analytic properties, which are well known to be important to establish  spectral gap  for the associated transfer operator. 
Important stochastic differential models are formulated on noncompact spaces, such as $\mathbb{R}^d$. 
Thus, such an extension is strongly motivated.
An approach to the definition of suitable spaces for the transfer operators associated with stochastic differential equations on $\mathbb{R}^d$ was implemented in \cite{FGGV24}, by using a sort of weighted Bounded Variation spaces with the scope of studying extreme events.

In the present paper, motivated by these considerations, we approach the study of optimal linear response for random dynamical systems over $\mathbb{R}^d$ that have a kernel transfer operator. 
This setting is, in our opinion, a first step in approaching the study of the optimal response for  stochastic differential equations.

\section{Settings and Results}\label{sec:sde}

Let $T>0$ and consider the SDE on $\mathbb{R}^d$ given by
\begin{equation}  \label{eq:system1}
\left \lbrace 
\begin{aligned}
dX_{s}^x & =b\left(X_{s}^x\right) dt+dW_{s}, &\quad s \in (0,T),\\ X_{0}^x &
=x.
\end{aligned}
\right.
\end{equation}
Here, $(W_{s})_s$ is a Brownian motion defined on a probability space $\left(
\Omega,\mathcal{F},\mathbb{P}\right)$, and $x \in \mathbb{R}^d$ is the initial condition. We impose the following assumptions on the drift $b$: 

\begin{description}
\item[A] (Locally-Lipschitz continuity) For each \(x_0\in\bR^d\), there exist constants $K>0$ and \(\delta_0>0\) such that for all $ x \in {\mathbb{R}^d}$ with \(|x_0-x|<\delta_0\),
\begin{equation}\label{eq:lipschitz}
|b(x_0)-b(x)|\leq K|x_0-x|.
\end{equation}

\item[B] (Dissipativity) There exist constants $c_1 \in \mathbb{R}$ and $c_2>0$ such that for all $x \in \mathbb{R}^d $, 
\begin{equation}  \label{eq:dissiphyp}
\langle b(x), x \rangle \leq c_1 - c_2 \norm{x}^2 .
\end{equation}
\end{description}

Under assumptions \textbf{A} and \textbf{B}, the SDE \eqref{eq:system1} admits a unique stationary measure $\mu$, see \cite[Section~4.1]{FGGV24} for details.

Now, let $\theta _{s}$, for $s\geq 0$, be the flow associated to the deterministic part of the above SDE, that is
\begin{equation*}
\left\{ 
\begin{array}{l}
\dot{\theta}_{s}(x)=b(\theta_{s}(x)), \\ 
\theta _{0}(x)=x.%
\end{array}%
\right.
\end{equation*}
For $\lambda \in (0,1],\  s>0$, define the Gaussian density
\begin{equation*}
g_{\lambda}(s,x):=s^{-\frac{d}{2}}e^{\frac{-\lambda |x|^{2}}{s}}.
\end{equation*}
The following result, due to \cite{MPZ}, provides two-sided estimates and gradient bounds for the density associated to the (unique) solution $(X_s^x)_s$ of the SDE \eqref{eq:system1}.

\begin{theorem}[\protect\cite{MPZ}, Theorem 1.2 and Remark 1.3.]
\label{MPZ} 
Fix \(T>0\).  For each \(t\in(0,T)\) and \(x\in\mathbb{R}^{d}\), the law of \(X_{t}^{x}\) has a density $\kappa_t(x,y)$ that is
continuous in both variables $x,y \in \mathbb{R}^d$. Moreover, $\kappa_t$ satisfies the following:

\begin{description}
\item[1] (Two sided density bounds) There exist  constants $\lambda _{0}\in
(0,1], ~C_{0}\geq 1$, depending on $T,K,d$, such that for all $x,y\in \mathbb{R}%
^{d}$ and $~t<T$,%
\begin{equation*}
C_{0}^{-1}g_{\lambda _{0}^{-1}}(t,\theta _{t}(x)-y)\leq \kappa_t(x,y)\leq
C_{0}g_{\lambda _{0}}(t,\theta _{t}(x)-y).
\end{equation*}

\item[2] (Gradient estimates) There exist constants $\lambda _{1}\in (0,1],~C_{1}\geq 1,
$ depending on $T,K,d$, such that for all $x,y\in \mathbb{R}^{d}$, and $~t<T$%
\begin{equation*}
|\nabla _{x} \kappa_{t}(x,y)|\leq C_{1}t^{-\frac{1}{2}}g_{\lambda _{1}}(t,\theta
_{t}(x)-y),
\end{equation*}%
\begin{equation*}
|\nabla _{y} \kappa_{t}(x,y)|\leq C_{1}t^{-\frac{1}{2}}g_{\lambda _{1}}(t,\theta
_{t}(x)-y).
\end{equation*}
\end{description}
\end{theorem}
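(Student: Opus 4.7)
The plan is to prove the result via the parametrix method combined with Malliavin calculus, which is the standard route for Aronson-type bounds for non-degenerate SDEs. The noise being additive with identity diffusion matrix makes the generator $\frac{1}{2}\Delta + b\cdot\nabla$ uniformly parabolic; the crucial twist is that the drift $b$ is unbounded and only locally Lipschitz. Existence and smoothness of a density for each $t \in (0,T)$ is immediate from Malliavin calculus, since the Malliavin matrix of $X_t^x$ is trivially non-degenerate (the diffusion coefficient is the identity); alternatively one may obtain weak solutions via Girsanov after a localization argument that uses the dissipativity \eqref{eq:dissiphyp} to rule out explosion.

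For the upper bound, I would freeze the drift along the deterministic skeleton and compare $\kappa_t$ to a Gaussian centered at $\theta_t(x)$. Writing a Duhamel expansion $\kappa_t = p_t^{(0)} + \int_0^t \kappa_{t-s} \star \Phi_s \, ds$, where $p_t^{(0)}$ is the heat kernel associated with the frozen drift and $\Phi_s$ measures the discrepancy, one observes that each differentiation of the Gaussian produces a factor $s^{-1/2}$ which is precisely balanced by a factor $s^{1/2}$ gained from the local Lipschitz continuity of $b$. Iterating the resulting Volterra equation yields a series whose terms decay geometrically, summing to the Gaussian upper bound $\kappa_t(x,y) \leq C_0 g_{\lambda_0}(t,\theta_t(x)-y)$. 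Centering at $\theta_t(x)$ rather than $x$ is natural: the deterministic flow is the most likely path, and dissipativity keeps $\theta_s(x)$ and the solution under control for $s \in (0,T)$.

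For the lower bound, the standard device is a chaining argument a la Aronson--Fabes--Stroock. Partition $[0,t]$ into $N$ subintervals of length $t/N$ chosen small enough that, inside a tube of Gaussian width around the deterministic trajectory, the drift is essentially constant and a local Gaussian lower bound applies; then stitch together the estimates via the semigroup property, integrating over intermediate points in a tube around $\theta_\cdot(x)$. The product of local lower bounds telescopes, via a Chapman--Kolmogorov identity, into $C_0^{-1} g_{\lambda_0^{-1}}(t,\theta_t(x)-y)$. For the gradient estimates, the cleanest route is the Bismut--Elworthy--Li formula: for bounded measurable $\phi$,
\begin{equation*}
\nabla_x \bE[\phi(X_t^x)] = \bE\!\left[\phi(X_t^x)\,\frac{1}{t}\int_0^t J_s^x \, dW_s\right],
\end{equation*}
where $J_s^x = \nabla_x X_s^x$ solves the linearized equation $dJ_s^x = \nabla b(X_s^x) J_s^x \, ds$. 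Gronwall gives an $L^2$ bound on $J_s^x$, and combining this Malliavin weight with the already-established Gaussian upper bound (after localizing $\phi$ to a neighborhood of $y$ and taking $\phi$ to approximate $\delta_y$) yields $|\nabla_x \kappa_t(x,y)| \leq C_1 t^{-1/2} g_{\lambda_1}(t,\theta_t(x)-y)$. The $\nabla_y$ estimate follows either by an analogous argument applied to the adjoint semigroup or by differentiating the parametrix series directly.

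The main obstacle is dealing with the unbounded, only locally Lipschitz drift: classical parametrix proofs assume $b$ globally smooth with bounded derivatives. To get around this I would use the dissipativity \eqref{eq:dissiphyp} to derive exponential moment bounds on $\sup_{s\leq T}\norm{X_s^x}$; this allows truncating $b$ outside a large ball $B_R$ and controlling the resulting Girsanov density, so one reduces to the bounded-Lipschitz case up to errors that vanish as $R \to \infty$. Once this reduction is in place, the parametrix, chaining, and Malliavin arguments go through with constants depending only on $T$, $K$ (the local Lipschitz constant), and $d$, as claimed.
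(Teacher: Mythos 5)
First, a point of order: the paper does not prove this statement at all. Theorem~\ref{MPZ} is imported verbatim from \cite{MPZ} (Theorem 1.2 and Remark 1.3 there) and is used as a black box; there is no internal proof to compare against. Your sketch is therefore being measured against the literature proof, and in broad outline it matches it: the argument in \cite{MPZ} is indeed a parametrix expansion in which the Gaussian proxy is centered along the deterministic flow $\theta_t(x)$, precisely because the drift is unbounded, and the two-sided bounds come from iterating the resulting Volterra series together with a chaining argument for the lower bound. So the overall architecture you propose is the right one.

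That said, two steps in your sketch would not go through as written. (i) The reduction to a bounded drift by truncating $b$ outside a ball $B_R$ and invoking Girsanov cannot be the backbone of the argument: the classical bounded-coefficient parametrix bounds are centered at $x$, not at $\theta_t(x)$, and their constants degrade with $\|b\|_\infty$, hence with $R$; there is no mechanism in your sketch by which the errors ``vanish as $R\to\infty$'' while producing constants depending only on $T,K,d$ and a bound centered at the flow. The whole point of \cite{MPZ} is to avoid this reduction by building the flow-centered proxy from the start, so that only the local oscillation of $b$ (controlled by \eqref{eq:lipschitz}) enters the parametrix kernel. (ii) The Bismut--Elworthy--Li route for the gradient bounds presupposes a differentiable drift: the variational equation for $J_s^x=\nabla_x X_s^x$ involves $\nabla b(X_s^x)$, which need not exist under mere local Lipschitz continuity, and even granting it via Rademacher, a Gronwall bound on $J$ in $L^2$ only yields $|\nabla_x \cP\phi(x)|\lesssim t^{-1/2}\|\phi\|_\infty$, not the pointwise Gaussian weight $t^{-1/2}g_{\lambda_1}(t,\theta_t(x)-y)$ on $|\nabla_x\kappa_t(x,y)|$; extracting the latter from a $\delta_y$-approximation requires precisely the kind of pointwise kernel control that the parametrix already delivers by direct differentiation of the series. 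In \cite{MPZ} the gradient estimates are obtained by differentiating the parametrix expansion term by term and checking that the singularity $s^{-1/2}$ remains integrable, which is the self-contained route you should take for both $\nabla_x$ and $\nabla_y$. Finally, note that the dissipativity hypothesis \eqref{eq:dissiphyp} is not needed for Theorem~\ref{MPZ} itself (which is a finite-horizon statement); in this paper it is used only to guarantee existence and uniqueness of the stationary measure, so building your proof on exponential moment bounds derived from it misattributes where that hypothesis does its work.
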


\begin{remark}\label{rem:no_t}
    As stated above, the SDE \eqref{eq:system1} that we consider in this article admits a unique stationary measure $\mu$.
   In the following, we fix the time $t=1$ and consider a time discretisation of this system. For notational convenience, we drop the subscript $t$, and denote the kernel $\kappa_1(x,y)$ simply by $\kappa(x,y)$.
\end{remark}

As discussed in Section~\ref{sec:intro}, a dynamical system exhibits \emph{linear response} if its invariant measure changes differentiably with respect to the changes in the initial system. One of our main results gives the \emph{explicit characterization for linear response} for the system \eqref{eq:system1}, which is suitable for the numerical approximation. 

To be able to state the precise result (Theorem~\ref{thm:res_diff}), we introduce the transfer operator \(\cL: L^1 \to L^1\) (Definition~\ref{def:to}), defined by
$$
\cL f(y) = \int \kappa (x,y) f(x)\ dx,
$$
where  $\kappa$ is the kernel from Theorem \ref{MPZ} (see Remark \ref{rem:no_t}). 

To get a spectral gap and other desirable properties, we consider $\mathcal{L}$ as acting on  a suitable Banach space \(\fB\) of weighted densities (see \eqref{def:strongsp}).
 We then consider a class of perturbed systems by applying perturbations directly to the associated transfer operators. We consider consider a compact domain $D\subset \mathbb{R}^d$ and we  introduce  a family of \emph{perturbed kernels} $\kappa_\delta$ of the form
$$
\kappa_\delta = \kappa + \delta \cdot  \dot   \kappa + r_\delta,
$$
where $\dot \kappa, r_\delta \in L^2(D \times D)$ and $r_\delta = o (\delta)$ is an higher-order perturbation in $L^2(D \times D).$ This yields a family of perturbed transfer operators
 \begin{equation}\label{pert}
\cL_\delta f(y) = \int \kappa_{\delta }(x,y)  f(x)\ dx,
\end{equation}
with the convention \(\cL_0=\cL\), see Section~\ref{sec:perturb} for more details.

\begin{maintheorem}

    Consider the family \(\cL_\delta:\fB\to\fB\) of transfer operators, with \(\delta\in [0,\bar\delta)\). Then:
    \begin{enumerate}
        \item The operators have invariant densities in \(\fB\): for each \(\delta\in [0,\bar\delta)\) there is \(f_\delta\neq 0\) such that \(\cL_\delta f_\delta=f_\delta\) (Proposition~\ref{prop:inv_den} and Proposition~\ref{prop:inv_msr}).
        \item There exists an operator \(\dot\cL:L^1\to L^1\), defined by \(\dot\cL f(y) = \int \dot\kappa(x,y) f(x)\ dx\), such that, restricted to the strong space \(\fB\), the following limit holds
        \[
        \lim\limits_{\delta\to 0} \norm{\frac{\cL_\delta-\cL_0}{\delta}f_0-\dot\cL f_0}_s=0,
        \]
        where \(\norm{\cdot}_s\) is the norm on the strong space \(\fB\) (Lemma~\ref{ldot2}).
        \item For \(\delta\) small enough, we have linear response: that is 
        \begin{equation}\label{res0}
        \lim\limits_{\delta\to 0}\norm{\frac{f_\delta-f_0}{\delta}-(Id-\cL_0)^{-1}\dot\cL f_0}_1=0,
        \end{equation}
        where \(\norm{\cdot}_1\) is the \(L^1\) norm (Theorem~\ref{thm:res_diff}).
    \end{enumerate}
    
\end{maintheorem}

While results on the existence of linear response for SDEs on $\mathbb{R}^d$ can be found in the literature (see \cite{HM10}, \cite{CKB22}), the formula \eqref{res0} is particularly suitable for numerical approximation, within an appropriate numerical scheme that we describe in Section~\ref{sec:optimal}. We exploit this formulation to pursue the main objective of the paper: identifying the optimal perturbation that induces a prescribed response, as outlined in the introduction. Thanks to the explicit characterization that we derive, we are also able to demonstrate how to approximate such perturbations in concrete examples (see Section~\ref{sec:numerics}).

This leads to our next main result, which addresses the existence and uniqueness of such an optimal perturbation. 
To state it, consider a compact neighborhood \(D\subset  \bR^d\) of $0$. Formalizing the idea that we mean to perturb the kernel of the transfer operator in a certain direction $\dot \kappa \in L^2(D\times D)$, we introduce an operator (see Definition~\ref{R}) 
$$
R: L^2(D\times D) \to L^1, \quad R(\dot \kappa) = \lim\limits_{\delta\to 0}\frac{f_{\dot \kappa,\delta} - f_0}{\delta},
$$
where $f_{\dot \kappa,\delta}$ and $ f_0$ are the invariant densities of the perturbed and unperturbed transfer operators \(\cL_\delta\) and \(\cL_0\), respectively, acting on the Banach space \(\fB\). 

\begin{maintheorem}
Let $\phi \in L^\infty $ be an observable, and let $P \subset L^2(D \times D)$ be a closed, bounded, and strictly convex set whose relative interior contains the zero function. Then the optimization problem (Proposition~\ref{prop:gen_cnvx})
     \begin{eqnarray} 
     \max \left \lbrace \mathcal{J}(\dot{\kappa})\ : \   \dot{\kappa} \in P\right \rbrace 
\end{eqnarray}
admits a unique solution.
\end{maintheorem}

The above results are stated more precisely and proved in the following sections (see the references provided in each statement), where all the necessary definitions and technical details are also introduced.

Finally, in subsection~\ref{sec: numerical scheme for the approximation of the opt. pertb.}, we present a numerical scheme for approximating the optimal perturbation. This scheme is built upon the constructive nature of our proofs and the explicit formulas derived therein, which are directly implemented in the numerical procedure. A constructive algorithm for computing the optimal perturbation is given. The corresponding numerical experiments are carried out on a concrete example of an SDE in Section~\ref{sec:numerics}.

 We remark that because of the type of perturbations to the system considered in this work (see \eqref{pert}) the transfer operator obtained after perturbation may no longer be associated with an SDE. Such a perturbation can be interpreted as a ``local'' change to the initial system (the one arising from an SDE), whose nature is independent of the phenomenon whose model is the SDE. This can  be seen as a first step in the study of the optimal response for SDEs, where we apply the simplest possible meaningful perturbations. However one would like to consider other ``non-compact'' perturbations as well, such as those arising from perturbations to the drift term $b$ of the SDE (see \eqref{eq:system1}).

A natural direction for future work is indeed to consider perturbations of the system, to be applied directly on the SDE defining it, and in particular to the drift term $b$.
We think that this case also would fit our general framework. 
However in order to achieve this,
the differentiability and the differential of the transfer operator with respect to this perturbation should be obtained, like it is done
in Section \ref{sec:perturb} for the kind of perturbations we consider. Similar estimates have been achieved in \cite{KLP}. Unfortunately the convergence of these estimates in \cite{KLP} is in the $L^2$ topology, which is not sufficient for our purposes
(see Remark \ref{R20} for an explanation on why $L^2$ is not a good space in order to consider to get spectral gap for the transfer operator, and hence apply our strategy for the optimal response when the phase space is  $\mathbb{R}^d$ and hence not compact).
However such result indicates that with some further work the goal of fitting this kind of perturbations in our framework for the optimal response is achievable.

\subsection*{Overview of the paper}

The remainder of this paper is organised as follows. In Section~\ref{sec:to}, we introduce the transfer operator associated with the SDE~\eqref{eq:system1}, see~\eqref{eq:stoctransfer}. In particular, it is a kernel operator on the non-compact phase space $\mathbb{R}^d$. Then, we define suitable spaces for this operator. Specifically, we let the transfer operator act on $L^1(\mathbb{R}^d)$ and on a ``stronger'' space, constructed using a space we denote by 
$\fB$, which consists of $L^1(\mathbb{R}^d)$ densities that decay at infinity with a prescribed speed and are also in $L^2 $ on the domain $D$ where the perturbations are applied. The choice of the $L^2$ topology in this domain is motivated by optimisation purposes, which are simplified when working on a Hilbert space. The prescribed decay at infinity will be useful in obtaining suitable compactness properties, which will imply that the transfer operator, when considered on the strong space, has a spectral gap (see Lemma~\ref{lem:simple_ev}) and strong mixing properties (see Proposition~\ref{prop:bdd_res}).

In Section~\ref{sec:perturb}, we define and study the perturbations we intend to apply to our system. We show that the perturbed operators also have a spectral gap, and we prove a linear response statement for these systems and perturbations (see Theorem~\ref{thm:res_diff}).

In Section~\ref{sec:optimal}, we consider an observable in $L^\infty(\mathbb{R}^d, \mathbb{R})$ and explore the problem of finding the optimal perturbation that maximises the rate of change of the expectation of the observable. We formalise the optimisation problem in Problem~\ref{prob:max}, and prove that it has a unique solution (see Proposition~\ref{prop:gen_cnvx}). Moreover, we describe a numerical approach to approximate the unique solution in the case where the set of feasible perturbations is a ball in a suitable Hilbert space.

In Section~\ref{sec:numerics}, we apply the algorithm to illustrate the optimal perturbation on some examples. In particular, we consider a gradient system SDE with a symmetric double-well potential. Then, via a finite difference method, we numerically approximate the solution of the optimisation problem for a smooth observable given by the probability density function of a Gaussian random variable.

Finally, there is an appendix stating some well known results of convex optimisation that we have extensively used in Section~5.

\section{Transfer operator and Banach spaces}\label{sec:to}
\subsection{The Kolmogorov operator and the transfer operator}
In this section, we define the transfer operator associated with the evolution of the SDE considered at time $t=1$ and state/prove some basic properties of these operators. We will extensively use the density \(\kappa(x,y)\) (as Remark~\ref{rem:no_t} says, we drop the subscript \(t\)) and its properties given by Theorem~\ref{MPZ}.
\begin{definition}
\label{def:koldef} The Kolmogorov operator $\cP:L^{\infty }(\mathbb{R}%
^{d})\rightarrow C^{0}(\mathbb{R}^{d})$ associated with the system %
\eqref{eq:system1} at time $t=1$ is defined as follows. Let $\phi \in L^{\infty }(%
\mathbb{R}^{d})$, then for all $ x\in \mathbb{R}^{d}$ we set
\begin{equation*}
(\cP\phi )(x):=\mathbb{E}[\phi (X_1^x)],
\end{equation*}
where $X_1^x$ is the solution at time $t =1$ of the SDE \eqref{eq:system1} with initial condition $x$.
\end{definition}

The solution $X_1^x$, thanks to Theorem~\ref{MPZ}, has a density \ $\kappa(x,y)$, and thus we have
 
\begin{equation*}
(\cP\phi )(x)=\int_{\mathbb{R}^{d}}\phi (y)\kappa(x,y)dy.
\end{equation*}%
By this we see that 
\begin{equation}
\norm{ \cP\phi }_{\infty }\leq \norm{ \phi}_{\infty }.  \label{inft}
\end{equation}

 If $\nu $ is a Borel signed measure on $%
\mathbb{R}^{d}$ 
\begin{equation*}
\int_{\mathbb{R}^{d}}(\cP\phi )(x) d\nu(x) = \int_{\mathbb{R}^{d}}\int_{\mathbb{R%
}^{d}}\phi (y)\kappa(x,y)dyd\nu (x)
\end{equation*}%
supposing that $\nu $ has a density with respect to the Lebesgue measure {%
 $f\in L^{1}(\mathbb{R}^{d})$ i.e. $d\nu =f(x)dx$. We can thus
write 
\begin{equation}
\int_{\mathbb{R}^{d}}(\cP\phi )(x)d\nu (x)=\int_{\mathbb{R}^{d}}\phi (y)\left(
\int_{\mathbb{R}^{d}}\kappa(x,y)f(x)dx\right) dy.  \label{duality}
\end{equation}%
}
Now we define the transfer operator $\cL:L^{1}(\mathbb{R}^{d})\rightarrow
L^{1}(\mathbb{R}^{d})$ associated with the evolution of the system at time $t=1$.

\begin{definition}[Transfer operator]\label{def:to}
Given $f\in L^{1}(\mathbb{R}^{d})$ we define the measurable function $\cL f:%
\mathbb{R}^{d}\rightarrow \mathbb{R}^{d}$ as follows. For almost each $y\in 
\mathbb{R}^{d}$ let 
\begin{equation}
\lbrack \cL f](y):=\int \kappa(x,y)f(x)dx.  \label{eq:stoctransfer}
\end{equation}
\end{definition}

\begin{remark}
We will show that this operator has a unique invariant density \( f_\mu \), satisfying \( f_\mu = \mathcal{L} f_\mu \). This function \( f_\mu \) is the density of the stationary measure \( \mu \) associated with the SDE~\eqref{eq:system1} (see Proposition~\ref{prop:inv_den}).

\end{remark}

By \eqref{duality} we now get the duality relation between the Kolmogorov
and the transfer operator 
\begin{equation}
\int (P\phi )(x)d\nu (x)=\int \phi (y)[\cL f](y)dy.  \label{eq:stocduality}
\end{equation}
The following are some well-known and basic facts about integral operators with
kernels $\kappa$ in $L^{p}$, which will be useful:\\

\begin{itemize}
\item If $\kappa\in L^2$ the operator $\cL:L^{2}\rightarrow L^{2}$ is bounded and  
\begin{equation}
\norm{\cL f}_{2}\leq \norm{\kappa}_{2}\norm{f}_{2}  \label{KF}
\end{equation}%
(see Proposition 4.7 in II.\S 4 \cite{C}).

\item If $\kappa\in L^{\infty}$, then 
\begin{equation}
\norm{\cL f}_{\infty }\leq \norm{\kappa}_{\infty }\norm{f}_{1}  \label{KF2}
\end{equation}
and the operator $\cL:L^1\rightarrow L^{\infty }$ is bounded.
\end{itemize}
Now, we discuss the properties of the transfer operator on the space \(L^1\).
\begin{lemma}\label{lem:contraction}
The operator $\cL$ preserves the integral and is a weak contraction with
respect to the $L^1$ norm.\label{7}
\end{lemma}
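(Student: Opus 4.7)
The plan is to prove both claims by direct computation, exploiting that for each $x$ the kernel $\kappa(x,\cdot)$ is, by Theorem~\ref{MPZ}, the probability density of the random variable $X_1^x$. In particular,
\[
\int_{\mathbb{R}^d} \kappa(x,y)\,dy = 1 \quad \text{for every } x\in\mathbb{R}^d,
\]
which is the essential ingredient for both statements.

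First I would show integral preservation. Given $f\in L^1(\mathbb{R}^d)$, apply Tonelli's theorem to $|f(x)|\kappa(x,y)\ge 0$ to justify that
\[
\int_{\mathbb{R}^d}\int_{\mathbb{R}^d}|f(x)|\kappa(x,y)\,dx\,dy=\int_{\mathbb{R}^d}|f(x)|\left(\int_{\mathbb{R}^d}\kappa(x,y)\,dy\right)dx=\norm{f}_1<\infty,
\]
so the integrand $f(x)\kappa(x,y)$ is absolutely integrable on $\mathbb{R}^d\times\mathbb{R}^d$ and Fubini applies. Then
\[
\int_{\mathbb{R}^d}[\cL f](y)\,dy=\int_{\mathbb{R}^d}\int_{\mathbb{R}^d}\kappa(x,y)f(x)\,dx\,dy=\int_{\mathbb{R}^d}f(x)\,dx.
\]

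Next I would prove the weak contraction. Using the nonnegativity of $\kappa$ and the triangle inequality for the integral,
\[
\norm{\cL f}_1=\int_{\mathbb{R}^d}\left|\int_{\mathbb{R}^d}\kappa(x,y)f(x)\,dx\right|dy\le\int_{\mathbb{R}^d}\int_{\mathbb{R}^d}\kappa(x,y)|f(x)|\,dx\,dy,
\]
and then swap the order of integration by Tonelli (everything is nonnegative), obtaining
\[
\norm{\cL f}_1\le\int_{\mathbb{R}^d}|f(x)|\left(\int_{\mathbb{R}^d}\kappa(x,y)\,dy\right)dx=\norm{f}_1.
\]

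There is no real obstacle: the only thing to be careful about is the applicability of Fubini/Tonelli, which is handled by the preliminary nonnegative computation. The boundedness $\cL:L^1\to L^1$ (already implicit in Definition~\ref{def:to}) is a byproduct of the contraction estimate.
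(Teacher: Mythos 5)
Your proof is correct and is essentially the paper's argument unwound: the paper simply invokes the duality relation \eqref{eq:stocduality} with $\phi=1$ and $\phi=[\cL f]/|[\cL f]|$, which encodes exactly the Fubini/Tonelli computation and the normalization $\int\kappa(x,y)\,dy=1$ that you carry out directly. Your version is slightly more explicit about justifying the interchange of integrals, but the substance is the same.
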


\begin{proof}
The first statement directly follows from \eqref{eq:stocduality} setting $%
\phi =1$. For the second, we can work similarly using \eqref{inft} and %
\eqref{eq:stocduality} with $\phi =\frac{[\cL f]}{|[\cL f]|}$.
\end{proof}

\begin{remark}\label{rem:Markov_op}
    Since $\cL$ is a positive operator, we also get that $\cL$ is a Markov operator having kernel $\kappa$.
\end{remark}

Finally, in the following section, we define/construct the strong space that we want to study the operator \(\cL\) on.
\subsection{Functional Spaces}\label{sec:spaces}

In this section we construct spaces which are  suitable to obtain spectral gap on non-compact domains for the action of the transfer operator associated to the system. 

 The spaces we consider consist of densities whose behavior far away from the origin is controlled using certain weight functions going to $\infty $ at infinity, following the approach of \cite{FGGV24}. 
 
Let $\alpha > 0 $ and define the weight function 
\begin{equation}  \label{WF}
\rho_{\alpha} \left( \left\vert x\right\vert \right) =\left( 1+\left\vert
x\right\vert ^{2}\right) ^{\alpha/2}.
\end{equation}
Let $L_{\alpha}^{1}\left( \mathbb{R}^{d}\right) $ be the space of Lebesgue
measurable $f:\mathbb{R}^{d}\rightarrow\mathbb{R}$ such that 
\begin{equation*}
\left\Vert f\right\Vert _{L_{\alpha}^{1}\left( \mathbb{R}^{d}\right)
}:=\int_{\mathbb{R}^{d}}\rho_{\alpha} \left( \left\vert x\right\vert \right)
\left\vert f\left( x\right) \right\vert dx<\infty. 
\end{equation*}
Note that, $L^1_{\alpha} \subset L^{1} $ and for $f \in L^1_{\alpha} $, $\norm{ f }_1 \leq \norm{ f }_{L^1_{\alpha}} $. Moreover for $\alpha= 0$, $L^1_0 = L^1$. 

\begin{remark}
    Throughout the article, we denote the \(L^1, L^2\) and \(L^\infty\) norms by \(\norm{\cdot}_1, \norm{\cdot}_2\) and \(\norm{\cdot}_\infty\) respectively.
\end{remark}

For a Borel subset $S\subseteq \mathbb{R}^{d}$ let us define 
 \begin{equation*}
 \operatorname{osc}(f,S)= \esssup_{x\in S}f -\essinf_{x\in S}f.
 \end{equation*}
 
 Let  $\psi$ be a Radon probability measure on $\mathbb{R}^d$ and assume that: 

\begin{itemize}
\item[\textbf{($A_{\protect\psi}1$)}] $\psi$ is absolutely continuous with respect to
the Lebesgue measure, having a continuous bounded density $\psi^{\prime }$
such that $\psi^{\prime }>0$ everywhere. 
\end{itemize}

With such a  probability measure satisfying (\(A_\psi 1\)), define a norm $\norm{ \cdot }_{BV_{\alpha}}$ for \(f\in L^1_\alpha(\bR^d)\)  by setting 
\begin{equation}  \label{def:BValpha}
\norm{ f}_{BV_{\alpha}}:=\norm{f}_{L_{\alpha
}^{1}\left( \mathbb{R}^{d}\right) }+\sup_{\epsilon\in\left( 0,1\right]
}\epsilon^{-1}\int_{\mathbb{R}^{d}}\operatorname{osc}\left( f,B_{\epsilon}\left( x\right)
\right) d\psi(x),
\end{equation}
where $B_{\epsilon }(x)$ denotes the ball in $\mathbb{R}^d$ with center $x$ and radius $\epsilon$.
Being the sum of a norm and a seminorm, $\norm{ \cdot }_{BV_{\alpha}}$ indeed defines a norm and the following space is a Banach space\footnote{In \cite{S2000} and \cite{K85}, these spaces have been studied extensively and in more general form as well.} (see \cite[Proposition~3.3]{S2000})
\[
BV_\alpha(\bR^d)=\{f\in L^1_\alpha: \norm{f}_{BV_\alpha}<\infty\}.
\] 
 The presence of the measure $\psi$ in the definition of the oscillatory seminorm is necessary to treat the case when we have a non-compact domain, which in this case is $\mathbb{R}^d$. Instead, if the domain was compact, one could simply take $\psi=1$.\\

Finally, to define the strong space, let $D$ be a compact set in $\mathbb{R}^{d}$. In what follows, \(D\) is the set where we allow perturbations in our system. Accordingly, we define a
suitable norm, adapted to such perturbations, as follows:
\begin{equation}\label{eq:strong_norm}
\norm{f}_{s}:=\norm{f}_{L_{2}^{1}}+\norm{1_{D}f}_{{2}}.
\end{equation}

We define the strong space of densities as follows, equipped with the norm $\norm{\cdot}_{s}$:
\begin{equation}\label{def:strongsp}
\fB:=\{f\in L_{2}^{1}\left( \bR^{d}\right) ,\norm{f}_{s}<\infty \}.
\end{equation}

The following results from \cite{FGGV24} will be useful to prove the compact inclusion of the strong space \(\fB\) in \(L^1\) and, ultimately, the spectral gap.
\begin{proposition}
\label{thm:embeddingRD} $BV_{\alpha }\left( \mathbb{R}^{d}\right)
\hookrightarrow L^{1}\left( \mathbb{R}^{d}\right) $ is a compact embedding.
\end{proposition}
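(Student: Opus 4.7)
The plan is to combine a tightness argument at infinity, made possible by the growing weight $\rho_\alpha$ with $\alpha > 0$, with local compactness on each ball, and then glue these via a diagonal extraction. Given a bounded sequence $\{f_n\}\subset BV_\alpha(\mathbb{R}^d)$ with $\|f_n\|_{BV_\alpha}\leq M$, the first step is to observe that
\[
\int_{|x|>R}|f_n(x)|\,dx \leq \rho_\alpha(R)^{-1}\int_{|x|>R}\rho_\alpha(|x|)|f_n(x)|\,dx \leq \rho_\alpha(R)^{-1}M,
\]
which tends to $0$ as $R\to\infty$ uniformly in $n$. This tightness reduces the problem to producing compactness on each ball $B_R$.

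For the local compactness on a fixed closed ball $\overline{B_R}$, I would exploit assumption $(A_\psi 1)$: since $\psi'$ is continuous and strictly positive, it is bounded below by a constant $c_R > 0$ on $\overline{B_R}$. Consequently,
\[
\sup_{\epsilon\in(0,1]}\epsilon^{-1}\int_{B_R}\operatorname{osc}(f_n,B_\epsilon(x))\,dx \leq c_R^{-1}\sup_{\epsilon\in(0,1]}\epsilon^{-1}\int_{\mathbb{R}^d}\operatorname{osc}(f_n,B_\epsilon(x))\,d\psi(x) \leq c_R^{-1}M,
\]
so the restrictions $\{f_n|_{B_R}\}$ are uniformly bounded in the classical Saussol-type BV seminorm defined with respect to Lebesgue measure on $B_R$. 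Combined with the uniform $L^1(B_R)$ bound inherited from $\|f_n\|_{L^1_\alpha}$, one obtains compactness in $L^1(B_R)$ by the Kolmogorov--Riesz criterion, since such a seminorm bound yields a translation-continuity estimate of the form $\|f_n(\cdot + h) - f_n\|_{L^1(B_R)}\leq C|h|$ for $|h|$ small.

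Finally, via a standard diagonal procedure over a countable exhaustion $R_k\to\infty$, I would extract a subsequence $\{f_{n_j}\}$ that converges in $L^1(B_{R_k})$ for every $k$. Writing
\[
\int_{\mathbb{R}^d}|f_{n_j}-f_{n_l}|\,dx = \int_{B_{R_k}}|f_{n_j}-f_{n_l}|\,dx + \int_{|x|>R_k}|f_{n_j}-f_{n_l}|\,dx,
\]
the first term is made small by the local convergence, and the second by the uniform tightness, so $\{f_{n_j}\}$ is Cauchy in $L^1(\mathbb{R}^d)$ and hence converges.

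The main obstacle I expect is the second step: justifying that the oscillation seminorm integrated against the weighted measure $\psi$ can be traded for a Lebesgue-based oscillation seminorm on each ball. This trade-off hinges entirely on the strict positivity of $\psi'$ supplied by $(A_\psi 1)$, and breaks down without it. Once this local BV-type bound is secured, the compact embedding $BV(B_R)\hookrightarrow L^1(B_R)$ follows from standard arguments in the spirit of Saussol's work referenced in the paper.
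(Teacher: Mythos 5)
The paper does not actually prove this proposition: it is quoted from \cite[Theorem~16]{FGGV24}, so there is no in-paper argument to compare against. Your proof is nonetheless correct in outline and is the standard route to such weighted compact-embedding results (and, as far as one can tell, the same route the cited reference takes, following Saussol): uniform tightness at infinity from the divergence of $\rho_\alpha$, a local $BV$-type bound on each ball obtained by using the lower bound $\psi'\geq c_R>0$ on compacta to replace $d\psi$ by Lebesgue measure, local $L^1$-compactness via Kolmogorov--Riesz, and a diagonal extraction. Two small points deserve explicit justification if you were to write this out in full: the translation estimate $\|f(\cdot+h)-f\|_{L^1(B_R)}\leq C|h|$ requires the a.e.\ inequality $|f(x+h)-f(x)|\leq \operatorname{osc}\left(f,B_{\epsilon}(x)\right)$ for $|h|<\epsilon/2$, which needs a short Lebesgue-point argument since $\operatorname{osc}$ is defined via essential suprema and infima; and the Kolmogorov--Riesz criterion on the bounded domain $B_R$ needs the usual care near $\partial B_R$ (e.g.\ work on $B_{R+1}$ or extend by zero). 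Both are routine, so I see no genuine gap.
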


\begin{lemma}
\label{lem:pbound} There exist constants $A,B>0$ and $\lambda \in \left(
0,1\right) $  such that 
\begin{equation}
\norm{\cL^{n}f}_{L_{2}^{1}}\leq A\lambda ^{n}\norm{f}_{L_{2}^{1}}+B\norm{f}_{{1}}
\label{ineq1 lemma 18}
\end{equation}%
for every $f\in L_{2}^{1}$ and every $n\in \mathbb{N}$.
\end{lemma}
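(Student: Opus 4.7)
The plan is to dualise the weighted $L^1$ estimate. Since $\cL$ is Markov (Remark~\ref{rem:Markov_op}), $|\cL^n f|\leq\cL^n|f|$, and by Fubini together with iteration of \eqref{eq:stocduality} we get
\[
\norm{\cL^n f}_{L^1_2}\leq\int_{\bR^d}\rho_2(|y|)\,(\cL^n|f|)(y)\,dy = \int_{\bR^d}|f(x)|\,\bE\bigl[\rho_2(|X_n^x|)\bigr]\,dx,
\]
where the Markov property identifies $\cL^n$ with the time-$n$ transition and hence its dual $\cP^n$ with $\phi\mapsto\bE[\phi(X_n^{\,\cdot})]$. Thus the inequality reduces to a pointwise moment bound of the form
\[
\bE\bigl[1+|X_n^x|^2\bigr]\leq \lambda^n(1+|x|^2)+B,\qquad \lambda\in(0,1),
\]
with $B$ independent of $n$ and $x$.

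To obtain such a bound, apply Itô's formula to the semimartingale $|X_s^x|^2$,
\[
d|X_s^x|^2=\bigl(2\langle b(X_s^x),X_s^x\rangle+d\bigr)\,ds+2X_s^x\cdot dW_s,
\]
and invoke hypothesis \textbf{B}. Taking expectations yields the differential inequality
\[
\tfrac{d}{ds}\bE[|X_s^x|^2]\leq (2c_1+d)-2c_2\,\bE[|X_s^x|^2],
\]
whose solution via Grönwall is $\bE[|X_s^x|^2]\leq e^{-2c_2 s}|x|^2+\tfrac{2c_1+d}{2c_2}\bigl(1-e^{-2c_2 s}\bigr)$. Setting $s=n$ and adding $1$ to both sides gives the desired bound with $\lambda:=e^{-2c_2}\in(0,1)$ and $B:=1+\tfrac{2c_1+d}{2c_2}$; plugging it back into the dual display produces the Lasota–Yorke estimate with $A=1$.

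The main technical point is the rigorous application of Itô's formula, since the drift in \textbf{A} is only \emph{locally} Lipschitz, so the stochastic integral above is a priori only a local martingale. The standard remedy is a stopping-time localisation: set $\tau_R:=\inf\{s:|X_s^x|\geq R\}$, apply Itô on $[0,s\wedge\tau_R]$ where the stochastic integral is a genuine $L^2$-martingale, take expectations and run Grönwall for the stopped process, then send $R\to\infty$ using Fatou's lemma together with the non-explosion of $X_s^x$ guaranteed by \textbf{A}+\textbf{B}. Once this is in place the rest of the argument is purely algebraic, and the constants $A,\lambda,B$ depend only on $c_1,c_2,d$. A purely kernel-based alternative is also available: insert the upper Gaussian bound of Theorem~\ref{MPZ} for $\kappa$ and compute the resulting integral against $\rho_2$ in terms of the deterministic flow $\theta_1(x)$, whose square norm satisfies $|\theta_1(x)|^2\leq e^{-2c_2}|x|^2+c_1(1-e^{-2c_2})/c_2$ by dissipativity applied to the ODE; however this route introduces the constant $C_0$ from Theorem~\ref{MPZ}, which must be absorbed by passing to a sufficiently high iterate before the Lasota–Yorke loop closes.
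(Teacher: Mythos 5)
Your argument is correct, but it takes a genuinely different route from the one the paper relies on: the paper does not prove Lemma~\ref{lem:pbound} internally at all — it imports it from \cite[Lemma~19]{FGGV24}, where the estimate is obtained on the kernel side, essentially along the lines of your closing remark: one integrates the weight $\rho_2$ against the Gaussian upper bound of Theorem~\ref{MPZ} and uses dissipativity of the deterministic flow $\theta_t$ (hypothesis \textbf{B} integrated along the ODE) to get a one-step drift inequality of the form $\int \rho_2(|y|)\kappa(x,y)\,dy \leq \lambda\rho_2(|x|)+B$, at the cost of the constant $C_0$ from Theorem~\ref{MPZ}, which must then be absorbed by iterating. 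Your route instead dualises against the unbounded test function $\rho_2$ and proves the Foster--Lyapunov moment bound $\bE[1+|X_n^x|^2]\leq e^{-2c_2 n}(1+|x|^2)+B$ directly by It\^o's formula and hypothesis \textbf{B}. This buys cleaner constants ($A=1$, $\lambda=e^{-2c_2}$, depending only on $c_1,c_2,d$, with no $C_0$ and no passage to a high iterate) and bypasses the density estimates entirely; the kernel route, by contrast, stays within the analytic framework of the paper, needs no stochastic calculus, and would survive in settings where the process is given only through its transition kernel. Three small points you should tighten: (i) the duality relation \eqref{eq:stocduality} is stated for $\phi\in L^\infty$, so to test against $\rho_2$ you should either invoke Tonelli directly for the nonnegative integrand $\rho_2(|y|)\,\kappa^{(n)}(x,y)\,|f(x)|$ (with $\kappa^{(n)}$ the $n$-step kernel, using time-homogeneity and Chapman--Kolmogorov) or approximate by the truncations $\rho_2\wedge M$ and use monotone convergence; (ii) in the localisation step, running Gr\"onwall on $\bE[|X_{s\wedge\tau_R}^x|^2]$ is slightly awkward because the dissipative term produces $-2c_2\,\bE[1_{\{s<\tau_R\}}|X_s^x|^2]$, which does not match the stopped expectation — it is cleaner to apply It\^o to $e^{2c_2 s}|X_s^x|^2$, stop at $\tau_R$, take expectations, and let $R\to\infty$ by Fatou; (iii) since hypothesis \textbf{B} allows $c_1\in\bR$, your $B=1+\tfrac{2c_1+d}{2c_2}$ could be nonpositive when $2c_1+d<0$; this is harmless (replace $B$ by $\max\{B,1\}$), but worth a word since the statement requires $B>0$.
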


\begin{lemma}
\label{lem:Rdregularization} For every $t>0$, $\cL$ is bounded linear from $%
L^{1}$ to $C^{1}$; in particular, there exists $C>0$ such that 
\begin{equation}
\norm{\cL f}_{C^{1}}\leq C\norm{f} _{{1}}.
\label{C1}
\end{equation}%
Moreover if $f\in L_{2}^{1}$ then 
\begin{equation}
\norm{ \cL f}_{BV_{2}}\leq C\norm{ f}_{L_{2}^{1}}.  \label{eq:RdstrongLY}
\end{equation}
\end{lemma}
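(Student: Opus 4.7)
The plan is to handle the two inequalities separately, exploiting the pointwise kernel and gradient bounds supplied by Theorem~\ref{MPZ} (with $t=1$ fixed by Remark~\ref{rem:no_t}).

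First, for the $C^1$ estimate \eqref{C1}, I would observe that the Gaussian density $g_{\lambda_0}(1,\theta_1(x)-y)$ is bounded uniformly in $x,y$ by some constant depending only on $\lambda_0$ and $d$, since $s^{-d/2}$ is evaluated at $s=1$ and the exponential factor is at most $1$. Hence, by the upper density bound, $\kappa$ is a uniformly bounded kernel and, for $f\in L^1$,
\begin{equation*}
\lvert \cL f(y)\rvert \leq \int |\kappa(x,y)|\,|f(x)|\,dx \leq \|\kappa\|_\infty \,\|f\|_1 \leq C\|f\|_1.
\end{equation*}
The gradient estimate $|\nabla_y\kappa(x,y)|\leq C_1 g_{\lambda_1}(1,\theta_1(x)-y)$ is likewise bounded uniformly and integrably in $x$. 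This uniform integrable domination is exactly what is needed to differentiate $\cL f(y)=\int \kappa(x,y) f(x)\,dx$ under the integral sign (dominated convergence for difference quotients), yielding $\nabla \cL f(y) = \int \nabla_y \kappa(x,y) f(x)\,dx$ with $\|\nabla \cL f\|_\infty \leq C\|f\|_1$. The same domination also gives continuity of $\nabla \cL f$ since $\nabla_y\kappa(x,y)$ is continuous in $y$ (this continuity is part of Theorem~\ref{MPZ}, up to a standard refinement of the argument). Combining the two bounds gives $\|\cL f\|_{C^1}\leq C\|f\|_1$.

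For the second estimate \eqref{eq:RdstrongLY}, I would split $\|\cL f\|_{BV_2}$ into its two pieces according to the definition \eqref{def:BValpha}. The weighted $L^1_2$ part is dispatched directly by Lemma~\ref{lem:pbound} applied with $n=1$:
\begin{equation*}
\|\cL f\|_{L^1_2} \leq A\lambda\,\|f\|_{L^1_2} + B\,\|f\|_1 \leq (A\lambda+B)\,\|f\|_{L^1_2}.
\end{equation*}
For the oscillatory seminorm, I would use part 1: since $\cL f \in C^1$ with $\|\nabla \cL f\|_\infty \leq C\|f\|_1$, the mean value inequality gives, for every $x$ and every $\epsilon\in(0,1]$,
\begin{equation*}
\operatorname{osc}(\cL f, B_\epsilon(x)) \leq 2\epsilon\,\sup_{y\in B_\epsilon(x)}|\nabla \cL f(y)| \leq 2\epsilon\,\|\nabla \cL f\|_\infty.
\end{equation*}
Dividing by $\epsilon$ and integrating against the probability measure $\psi$ produces $\sup_{\epsilon}\epsilon^{-1}\int \operatorname{osc}(\cL f, B_\epsilon(x))\,d\psi(x)\leq 2\|\nabla \cL f\|_\infty \leq 2C\|f\|_1\leq 2C\|f\|_{L^1_2}$. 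Adding the two pieces yields \eqref{eq:RdstrongLY}.

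The only mildly delicate point is the justification of differentiation under the integral sign in part 1, which requires producing an $L^1$-in-$x$ dominating function for the difference quotients of $\kappa(x,y)$ in $y$; this is handled via the gradient estimate from Theorem~\ref{MPZ} together with the fact that $g_{\lambda_1}(1,\theta_1(x)-y)$ is integrable in $x$ uniformly in $y$ (the deterministic flow $\theta_1$ being a diffeomorphism under the locally Lipschitz, dissipative hypotheses \textbf{A}--\textbf{B}, so that $x\mapsto \theta_1(x)-y$ is a nondegenerate change of variables). Beyond this technical verification, the proof is a direct combination of the kernel bounds with the already established Lemma~\ref{lem:pbound}.
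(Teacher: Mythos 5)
Your argument is correct, but note that the paper itself does not prove this lemma: it defers entirely to \cite[Lemma~20]{FGGV24}, so what you have written is a self-contained reconstruction of the cited proof rather than a parallel to anything in the text. Your reconstruction is sound and follows the natural route: at $t=1$ the bounds of Theorem~\ref{MPZ} give $\kappa(x,y)\leq C_0 e^{-\lambda_0|\theta_1(x)-y|^2}\leq C_0$ and $|\nabla_y\kappa(x,y)|\leq C_1 e^{-\lambda_1|\theta_1(x)-y|^2}\leq C_1$ uniformly, which yields $\norm{\cL f}_\infty\leq C_0\norm{f}_1$ and, by dominated convergence for difference quotients, $\norm{\nabla\cL f}_\infty\leq C_1\norm{f}_1$; the $BV_2$ bound then follows by splitting the norm, using Lemma~\ref{lem:pbound} with $n=1$ for the weighted part (together with $\norm{f}_1\leq\norm{f}_{L^1_2}$) and the mean value inequality $\operatorname{osc}(\cL f,B_\epsilon(x))\leq 2\epsilon\norm{\nabla\cL f}_\infty$ for the oscillation part, where $\psi$ being a probability measure makes the integral harmless. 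Two small remarks. First, your closing worry about producing an $x$-integrable dominating function via a change of variables is unnecessary: since the difference quotients of $\kappa(x,\cdot)$ are bounded by the \emph{uniform} constant $C_1$ and $f\in L^1$, the function $C_1|f(x)|$ already dominates, with no need for integrability of the Gaussian in $x$ or for $\theta_1$ to be a diffeomorphism. Second, your hedge about the continuity of $\nabla_y\kappa$ is the one genuine soft spot: Theorem~\ref{MPZ} as quoted asserts continuity of $\kappa$ and a pointwise bound on the gradient, not continuity of the gradient itself, so membership of $\cL f$ in $C^1$ (rather than merely Lipschitz with bounded a.e.\ derivative) strictly requires either the regularity statements of \cite{MPZ} in fuller form or the treatment in \cite{FGGV24}; for the purposes of \eqref{eq:RdstrongLY}, however, the Lipschitz bound alone suffices, since the oscillation estimate only uses $\norm{\nabla\cL f}_\infty$.
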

The proofs of Proposition~\ref{thm:embeddingRD}, Lemma~\ref{lem:pbound} and Lemma~\ref{lem:Rdregularization} can be found in \cite[Theorem~16, Lemma~19 and Lemma~20]{FGGV24} respectively.\\

Now we proceed to prove that the transfer operator  $\cL$ has spectral gap when acting on $\fB$, for which we need some preliminary results like compact inclusion (Proposition~\ref{prop:cpt_incl}) and Lasota-Yorke inequality (Lemma~\ref{lem:LY-ineq}) etc.\\
By Proposition \ref{thm:embeddingRD}, and Lemma \ref{lem:Rdregularization} we have the following:

\begin{proposition}\label{prop:cpt_incl}
Let $B$ be the closed unit ball in $\fB$. Then $\cL(B)$ is a compact set in $L^{1}$.
\end{proposition}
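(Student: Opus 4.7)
The plan is to combine the two preceding results directly: the smoothing estimate of Lemma~\ref{lem:Rdregularization}, which shows that $\cL$ maps $L_{2}^{1}$ boundedly into $BV_{2}$, together with the compact embedding of Proposition~\ref{thm:embeddingRD}.

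First, I would observe that the strong norm~\eqref{eq:strong_norm} satisfies $\norm{f}_{s}\geq \norm{f}_{L_{2}^{1}}$ directly from its definition, so every $f$ in the unit ball $B\subset \fB$ automatically satisfies $\norm{f}_{L_{2}^{1}}\leq 1$. Applying inequality~\eqref{eq:RdstrongLY} of Lemma~\ref{lem:Rdregularization} to such $f$, I obtain a uniform bound $\norm{\cL f}_{BV_{2}}\leq C\norm{f}_{L_{2}^{1}}\leq C$. In other words, $\cL(B)$ is a bounded subset of $BV_{2}(\mathbb{R}^{d})$.

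Next, I invoke Proposition~\ref{thm:embeddingRD}, according to which the inclusion $BV_{2}(\mathbb{R}^{d})\hookrightarrow L^{1}(\mathbb{R}^{d})$ is compact. This immediately implies that $\cL(B)$, being bounded in $BV_{2}$, is relatively compact in $L^{1}(\mathbb{R}^{d})$, which is the statement of the proposition (interpreting ``compact set'' in the standard sense of having compact closure in $L^{1}$, as is customary when verifying the compactness of an operator).

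The argument presents no real difficulty; it is a textbook two-step compactness scheme, with the heavy lifting already done in the Lasota--Yorke type regularization of Lemma~\ref{lem:Rdregularization} and the compact embedding of Proposition~\ref{thm:embeddingRD}, which together handle the non-compactness of $\mathbb{R}^{d}$ via the weight $\rho_{2}$. It is worth noting that the $L^{2}$ piece of the strong norm, namely $\norm{1_{D}f}_{2}$, plays no role in this particular compactness statement; its purpose is rather to provide a Hilbert space structure on the perturbation domain $D$, which will be exploited later in the optimization part of the paper.
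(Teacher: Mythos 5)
Your argument is correct and follows the same core route as the paper: bound $\cL(B)$ in $BV_{2}$ via \eqref{eq:RdstrongLY} together with the observation $\norm{f}_{L_{2}^{1}}\leq\norm{f}_{s}$, then apply the compact embedding of Proposition~\ref{thm:embeddingRD}. The only divergence is at the end: you conclude that $\cL(B)$ is \emph{relatively} compact in $L^{1}$, whereas the paper adds a further step purporting to show that $\cL(B)$ is closed in $BV_{2}$, so as to obtain a genuinely compact set. That extra step is actually the weakest part of the paper's proof: it asserts that a sequence $(y_n)$ in the closed unit ball $B$ of $\fB$ admits a limit $y\in B$ with $\norm{y_n-y}_{s}\to 0$ merely because $B$ is closed, which is not valid in an infinite-dimensional Banach space (and closedness in $BV_2$ would not by itself give closedness in $L^1$ anyway). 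Your reading of ``compact set'' as ``relatively compact'' is the one actually used downstream, since what feeds into Hennion's theorem in Lemma~\ref{lem:simple_ev} is the compactness of $\cL$ as an operator from $\fB$ to $L^{1}$, i.e.\ precompactness of the image of the unit ball; so nothing is lost by your shorter argument.
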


\begin{proof} Using Proposition~\ref{thm:embeddingRD}, which states that \(BV_2\) is compactly embedded in \(L^1\), it suffices to prove that \(\cL(B)\) is a closed and bounded set in the space \(BV_2\). 
By Lemma~\ref{lem:Rdregularization}, for every \(f\in B\), there exists \(C>0\) such that \(\norm{\cL f}_{BV_2}\leq C\norm{f}_{L^1_2}\leq C\norm{f}_s\leq C\), which implies \(\cL(B)\) is a bounded set in \(BV_2\). \\
To prove that \(\cL(B)\) is closed, let \((x_n)_{n\in\bN}\) be a Cauchy sequence in \(\cL(B)\). Since \(\cL(B)\subset BV_2\) and \(BV_2\) is a Banach space, there exists \(x\in BV_2\subset L^1\) such that \(\norm{x_n- x}_{BV_2}\to 0\). We want to show that \(x\in \cL(B)\). Since \(x_n\in \cL(B)\), there exists a sequence \((y_n)_{n\in\bN}\in B\) such that, for all \(n\in\bN\), \(\norm{y_n}_s\leq 1\) and \(\cL(y_n)=x_n\). As the unit ball \(B\) is closed, there exists \(y\in B\) such that \(\norm{y_n-y}_s\to 0\). By continuity, we have \(\cL(y)=x\), which implies \(x\in \cL(B)\), completing the proof.
\end{proof}

\begin{lemma}\label{lem:LY-ineq}
There exist constants $A,B>0$ and $\lambda \in \left( 0,1\right) $ such that 
\begin{equation}
\norm{\cL^{n}f}_{s}\leq A\lambda ^{n}\norm{f}_{s}+B\norm{ f}_{L^{1}}  \label{SLY}
\end{equation}%
for every $f\in \fB$ and every $n\in \mathbb{N}$.
\end{lemma}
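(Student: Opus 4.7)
The plan is to prove this Lasota--Yorke type inequality by decomposing the strong norm \(\norm{\cdot}_s=\norm{\cdot}_{L^1_2}+\norm{1_D\,\cdot\,}_2\) into its two pieces, and to handle each piece with one of the two preparatory lemmas already available: Lemma~\ref{lem:pbound} supplies a genuine Lasota--Yorke inequality for the \(L^1_2\) component, while Lemma~\ref{lem:Rdregularization} (together with compactness of \(D\)) will absorb the \(L^2(D)\) component into the \(\norm{f}_1\) term with no contraction factor needed.

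For the first term, Lemma~\ref{lem:pbound} gives constants \(A_1,B_1>0\) and \(\lambda\in(0,1)\) with
\[
\norm{\cL^n f}_{L^1_2}\leq A_1\lambda^n\norm{f}_{L^1_2}+B_1\norm{f}_1\leq A_1\lambda^n\norm{f}_s+B_1\norm{f}_1 ,
\]
where the last inequality uses the trivial comparison \(\norm{f}_{L^1_2}\leq\norm{f}_s\) coming from the definition \eqref{eq:strong_norm}.

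For the second term, I would first treat the case \(n\geq 1\). Writing \(\cL^n f=\cL(\cL^{n-1}f)\) and applying the smoothing estimate \eqref{C1} of Lemma~\ref{lem:Rdregularization} to \(\cL^{n-1}f\in L^1\) yields
\[
\norm{\cL^n f}_\infty\leq\norm{\cL^n f}_{C^1}\leq C\norm{\cL^{n-1}f}_1\leq C\norm{f}_1,
\]
where the final step uses that \(\cL\) is a weak \(L^1\)-contraction (Lemma~\ref{lem:contraction}). Since \(D\) is compact, Hölder gives
\[
\norm{1_D\cL^n f}_2\leq |D|^{1/2}\norm{\cL^n f}_\infty\leq C|D|^{1/2}\norm{f}_1 .
\]
Setting \(B_2:=C|D|^{1/2}\) and summing the two bounds produces the claimed inequality for every \(n\geq 1\) with constants \(A=\max(A_1,1)\), \(B=B_1+B_2\). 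The case \(n=0\), if needed, is covered trivially by taking \(A\geq 1\), since then \(\norm{f}_s\leq A\norm{f}_s\).

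There is no serious obstacle: the inequality is essentially a repackaging of the two lemmas imported from \cite{FGGV24}. The only point that deserves a line of care is the use of compactness of \(D\) to pass from an \(L^\infty\) bound (which is the natural output of the smoothing estimate) to the \(L^2(D)\) bound required by the strong norm, and the observation that this bound does not need a decaying factor \(\lambda^n\) because it is already absorbed into the \(\norm{f}_1\) term on the right-hand side.
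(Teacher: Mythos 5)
Your proof is correct and follows essentially the same route as the paper: split \(\norm{\cdot}_s\) into its \(L^1_2\) and \(L^2(D)\) components, apply Lemma~\ref{lem:pbound} to the first, and absorb the second into \(B\norm{f}_1\) via an \(L^\infty\) bound on \(\cL^n f\) combined with compactness of \(D\) and the weak \(L^1\)-contraction. The only cosmetic difference is that you obtain the \(L^\infty\) bound from the \(C^1\)-smoothing estimate \eqref{C1}, whereas the paper uses the kernel bound \eqref{KF2}, i.e.\ \(\norm{\cL g}_\infty\leq\norm{\kappa}_\infty\norm{g}_1\); both are valid.
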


\begin{proof}
By Lemma \ref{lem:pbound}, \ $\exists\  A_{1},B_{1}\geq 0,\lambda \in (0,1)$
such that%
\begin{eqnarray*}
\norm{\cL^{n}f}_{s}
&=&\norm{\cL^{n}f}_{L_{2}^{1}}+\norm{1_{D}\cL^{n}f}_{{2}} \\
&\leq &A_{1}\lambda ^{n}\norm{f} _{L_{2}^{1}}+B_{1}\norm{
f}_{{1}}+\norm{1_{D}\cL^{n}f}_{{2}}.
\end{eqnarray*}

Furthermore, by \eqref{KF2} and Lemma~\ref{lem:contraction}, 
\begin{eqnarray*}
\norm{1_{D}\cL^{n}f}_{{2}} &\leq &\sqrt{m(D)}\norm{\cL^{n}f}_{\infty } \\
&\leq &\sqrt{m(D)}\norm{\kappa}_{\infty }\norm{\cL^{n-1}f}_{{1}} \\
&\leq &\sqrt{m(D)}\norm{\kappa}_{\infty }\norm{f}_{{1}}
\end{eqnarray*}%
which leads to $\eqref{SLY}$.\footnote{\(m(D)\) denotes the $d$-dimensional Lebesgue measure of \(D\). }
\end{proof}
Let us define the spaces of zero-average functions in \(\fB\) and \(L^1\) respectively as 
\begin{equation} \label{V_B}
    V_{\fB}=\{f\in \fB:~\int fdm=0\}
\end{equation}
and 
\begin{equation}\label{V_s}
    V_{L^1}=\{f\in L^1:~\int fdm=0\}.
\end{equation}

In the rest of this section, we prove results on the functional analytic properties of the operator \(\cL\), namely, the existence of eigenvalues on the unit circle, the spectral gap, and the boundedness of the resolvent. These results, along with other inferences, give us the uniqueness of the invariant measure of the system in the strong space and its convergence to equilibrium. To prove these results, we extensively use the positivity of the kernel \(k\) associated with the operator \(\cL\). 

\begin{lemma}\label{lem:simple_ev}
The transfer operator \(\cL\) has spectral gap on $\fB$ and it has 1 as its simple and only eigenvalue on the unit circle.
\end{lemma}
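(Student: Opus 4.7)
The strategy is to apply a Hennion-type quasi-compactness theorem and then exploit the strict positivity of the kernel $\kappa$ from Theorem~\ref{MPZ} to pin down the peripheral spectrum via a Perron--Frobenius-style argument.

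First, I would combine the Lasota--Yorke inequality \eqref{SLY} of Lemma~\ref{lem:LY-ineq}, the compactness of $\cL:\fB\to L^1$ from Proposition~\ref{prop:cpt_incl}, and the $L^1$-weak-contraction property of Lemma~\ref{lem:contraction}. A standard application of Hennion's theorem then gives that $\cL:\fB\to\fB$ is quasi-compact: its essential spectral radius on $\fB$ is bounded by $\lambda<1$, and the part of the spectrum outside the disc of radius $\lambda$ consists of finitely many eigenvalues of finite multiplicity, all lying in the closed unit disc (since $\norm{\cL f}_1\leq \norm{f}_1$ and $\fB\hookrightarrow L^1$ continuously through the weighted $L^1_2$ norm).

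Next, I would establish the existence of a strictly positive invariant density in $\fB$. Taking any nonnegative $f\in\fB$ with $\int f\,dm=1$ and forming the Cesaro averages $f_N=\frac{1}{N}\sum_{n=0}^{N-1}\cL^n f$, iteration of \eqref{SLY} yields a uniform $\fB$-bound, while Proposition~\ref{prop:cpt_incl} supplies an $L^1$-convergent subsequence of $\{\cL f_N\}$. A standard Kakutani--Markov argument then produces $f_0\in\fB$ with $\cL f_0=f_0$, $f_0\geq 0$ and $\int f_0\,dm=1$; in particular $1\in\sigma(\cL)$. From the fixed-point equation $f_0(y)=\int \kappa(x,y) f_0(x)\,dx$ together with the strict pointwise lower bound on $\kappa$ given by Theorem~\ref{MPZ}, one concludes $f_0>0$ almost everywhere (indeed, $f_0$ is continuous by Lemma~\ref{lem:Rdregularization}, hence everywhere strictly positive).

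Finally, I would show simplicity of the eigenvalue $1$ and the absence of other peripheral eigenvalues. Suppose $\cL g=e^{i\theta} g$ for some nonzero $g\in\fB$. The triangle inequality gives $|g|=|\cL g|\leq \cL|g|$ pointwise, and integral preservation forces $\cL|g|=|g|$ a.e. If $g$ is real-valued and changes sign, splitting $g=g^+-g^-$ with both parts nonzero on positive-measure sets, the strict positivity of $\kappa$ implies $\cL|g|(y)>|g|(y)$ on a set of positive measure, contradicting $\int(\cL|g|-|g|)\,dm=0$. Hence every real invariant function has constant sign, and by the standard argument (subtract suitable multiples of $f_0$ and apply the same reasoning) every invariant element is a scalar multiple of $f_0$, giving simplicity. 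For complex $g$, writing $g=u\,|g|$ with $|u|\equiv 1$ and $|g|=cf_0$, the equation $\int \kappa(x,y)u(x)f_0(x)\,dx = e^{i\theta} u(y)f_0(y)$ forces the equality case of the triangle inequality for a strictly positive kernel, which in turn forces $u$ to be constant and hence $e^{i\theta}=1$.

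\textbf{Main obstacle.} The routine point is the quasi-compactness step, which is automatic from the Lasota--Yorke inequality and compactness already in hand. The delicate point is transferring the classical Perron--Frobenius peripheral-spectrum arguments to the non-compact, weighted setting $\fB$. What makes this work cleanly is the combination of the strict positivity of $\kappa$ from Theorem~\ref{MPZ} with the $L^1\to C^1$ smoothing of Lemma~\ref{lem:Rdregularization}, which guarantees that any invariant element is a continuous function on $\bR^d$ so that the pointwise sign analysis is rigorous everywhere rather than merely almost everywhere.
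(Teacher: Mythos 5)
Your proof is correct and shares the paper's two pillars: quasi-compactness via Hennion's theorem (weak $L^1$ contraction, the Lasota--Yorke inequality of Lemma~\ref{lem:LY-ineq}, and the compactness from Proposition~\ref{prop:cpt_incl}), and the strict positivity of $\kappa$ from Theorem~\ref{MPZ} to control the peripheral spectrum. The execution of the second step, however, is genuinely different. The paper does not construct the positive invariant density inside this proof: for a putative peripheral eigenvalue $\theta\neq 1$ with eigenvector $f_\theta$ it sets $h=\cL f_\theta-f_\theta$, a nonzero element of $V_\fB$ whose iterates satisfy $\cL^n h=\theta^n h$ and hence have constant $L^1$ norm, while strict positivity of the kernel forces $\norm{\cL h}_1<\norm{h}_1$ for any nonzero zero-average $h$ --- a contradiction; simplicity is handled by the same strict-contraction observation applied to $h=u-v$. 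You instead run the classical Perron--Frobenius route: build $f_0>0$ by Ces\`aro averages, deduce $\cL|g|=|g|$ from the pointwise inequality $|g|\leq\cL|g|$ plus integral preservation, identify $|g|$ with a multiple of $f_0$ via the constant-sign argument, and invoke the equality case of the triangle inequality for a strictly positive kernel to force the phase to be constant and $e^{i\theta}=1$. Both arguments rest on the same mechanism --- strict positivity of $\kappa$ makes $\cL$ strictly contract the $L^1$ norm of sign-changing (or non-constant-phase) data --- but your route needs the strictly positive fixed point $f_0$ as an ingredient, which you correctly supply and which the paper only extracts afterwards as Proposition~\ref{prop:inv_den}, whereas the paper's telescoping trick bypasses it. A modest advantage of your version is that the strict-decrease step, which the paper asserts rather tersely, is made fully explicit through the triangle-inequality equality case and the $C^1$ regularity of invariant elements supplied by Lemma~\ref{lem:Rdregularization}.
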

\begin{proof}

    By Lemma~\ref{lem:contraction}, the operator \(\cL\) is integral preserving, which implies that \(1\) lies in the spectrum of \(\cL\).\\
   Recall that, by Lemma~\ref{lem:contraction} and Lemma~\ref{lem:LY-ineq}, the operator \(\cL\) satisfies the hypothesis of Hennion theorem (see \cite[Theorem~B.14]{DKL21}), which implies that the operator is quasi-compact which, in turn, implies that it has spectral gap on \(\fB\). Consequently, 1 is an eigenvalue of \(\cL\). \\
   Now, we want to prove that 1 is the unique eigenvalue of \(\cL\) on the unit circle. Let us suppose, to the contrary, that there exists an eigenvalue \(\theta\neq 1\) such that \(|\theta|=1\). Accordingly, there exists a corresponding eigenvector \(f_\theta\in\fB\), that is 
\[
\cL f_\theta=\theta f_\theta.
\] 
Then, for any \(n\in\bN\), 
\[
\begin{split}
    \norm{\cL^{n+1}f_\theta-\cL^n f_\theta}_{1}&=\norm{\theta^n\cL f_\theta-\theta^n f_\theta}_{1}\\
    & = \norm{\cL f_\theta-f_\theta}_{1}.
\end{split}
\]
Since \(\cL\) preserves the integral (Lemma~\ref{lem:contraction}), we have 
\[
\int (\cL f_\theta-f_\theta) = \int \cL f_\theta-\int f_\theta=\int f_\theta-\int f_\theta =0.
\]
 This implies \(\cL f_\theta-f_\theta(\neq 0)\in V_\fB\). Then, since the kernel \(k\) is positive (by Theorem~\ref{MPZ}), we get
\[
\begin{split}
    \norm{\cL^n(\cL f_\theta-f_\theta)}_{1}&<\norm{\cL^{n-1}(\cL f_\theta-f_\theta)}_{1}\\
&<\norm{\cL f_\theta-f_\theta}_{1}.
\end{split}
\]
This contradicts the earlier calculation. Hence, 1 is the only eigenvalue of \(\cL\) on the unit circle. \\
Now, to show that 1 is a simple eigenvalue, let \(u,v\in\fB\) be two distinct eigenvectors corresponding to the eigenvalue 1. Then 
\[\cL u=u,\  \cL v=v .\]
Since \(u\) and \( v\) are fixed points of \(\cL\), \(u-v\) is also a fixed point of \(\cL\) and hence an invariant measure. Notice that \(\int u-v=\int u-\int v=0\) which implies \(u-v\in V_\fB\). Again, since \(k\) is positive, we have \(\norm{\cL(u-v)}_s<\norm{u-v}_s\), which contradicts the fact that \(u-v\) is a fixed point. Consequently, 1 is a simple eigenvalue of \(\cL\) on \(\fB\).
\end{proof}

A first direct consequence of Lemma \ref{lem:simple_ev} is the uniqueness of the invariant probability measure of $\cL$.

\begin{proposition}\label{prop:inv_den}
    The operator has a unique  invariant probability density \(f\in\fB\). Hence satisfying \(\int f=1\) and \(\cL f=f\).
\end{proposition}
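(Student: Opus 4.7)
The plan is to combine Lemma~\ref{lem:simple_ev} (existence of a nontrivial fixed vector and simplicity of the eigenvalue \(1\)) with the strict positivity of the kernel \(\kappa\) given by Theorem~\ref{MPZ} to show that the fixed vector can be taken to be a probability density, and that such a normalization is unique.

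First, by Lemma~\ref{lem:simple_ev}, \(1\) is a simple eigenvalue of \(\cL\) on \(\fB\), so there exists a nonzero \(f\in\fB\) with \(\cL f=f\). The next step is to prove that \(f\) has constant sign. Writing \(f=f^+-f^-\) with \(f^\pm\geq 0\) and using that \(\cL\) is a positive (Markov) operator (Remark~\ref{rem:Markov_op}), one gets the pointwise inequality \(|\cL f|\leq \cL|f|\). Integrating and using that \(\cL\) preserves the integral of nonnegative functions (Lemma~\ref{lem:contraction}), together with \(\cL f = f\), gives
\[
\int |f|\,dm=\int|\cL f|\,dm\leq \int \cL|f|\,dm=\int|f|\,dm,
\]
so equality holds pointwise a.e. This forces \(\cL f^+(y)\,\cL f^-(y)=0\) for a.e.\ \(y\). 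Now the key input enters: by Theorem~\ref{MPZ}, \(\kappa(x,y)>0\) for all \(x,y\), so for any nonzero nonnegative \(g\in L^1\) one has \(\cL g(y)=\int\kappa(x,y)g(x)\,dx>0\) everywhere. Applied to \(g=f^\pm\), this means at least one of \(f^+,f^-\) must vanish a.e., so \(f\) has constant sign. Replacing \(f\) by \(-f\) if necessary and normalizing by the (now strictly positive) quantity \(\int f\,dm\) produces an invariant probability density in \(\fB\).

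For uniqueness, if \(f_1,f_2\in\fB\) are two invariant probability densities, then both are eigenvectors of \(\cL\) for the eigenvalue \(1\); by simplicity of this eigenvalue (Lemma~\ref{lem:simple_ev}) one has \(f_2=c f_1\) for some scalar \(c\), and the normalization \(\int f_1\,dm=\int f_2\,dm=1\) forces \(c=1\).

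The main obstacle is the sign argument: showing that the abstract eigenvector furnished by spectral-gap/quasi-compactness can be chosen to be nonnegative. This is where strict positivity of \(\kappa\) (as opposed to mere nonnegativity) is essential, since it rules out the possibility that \(\cL f^+\) and \(\cL f^-\) could be supported on disjoint nontrivial sets.
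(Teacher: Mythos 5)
Your proof is correct, and it follows the route the paper intends: the paper states this proposition without an explicit proof, presenting it as ``a first direct consequence'' of Lemma~\ref{lem:simple_ev}, and your argument supplies exactly the details that make that work --- existence and simplicity of the eigenvalue \(1\) from Lemma~\ref{lem:simple_ev}, strict positivity of \(\kappa\) from Theorem~\ref{MPZ}, and integral preservation from Lemma~\ref{lem:contraction}. Your sign argument (the \(f=f^+-f^-\) decomposition forcing \(\cL f^+\cdot\cL f^-=0\) a.e., hence a one-signed eigenvector that can be normalized) is the nontrivial step the paper glosses over, and it is handled correctly.
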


\begin{remark}
   We remark that the unique stationary measure $\mu$ of the SDE \eqref{eq:system1}
     must be also invariant for $\cL$, and this must be the unique invariant probability measure found above. This also implies that $\mu$ has a density in the strong space $\fB$ (see Remark~\ref{rem:no_t}). 
   We conclude that the unique stationary measure of the SDE is indeed equal to the invariant measure of the transfer operator.
\end{remark}

The spectral gap and the uniqueness of the invariant probability measure for $\cL$ imply the following two classical consequences, which will be used later.

\begin{proposition}\label{prop:bdd_res}
 For each $f\in V_\fB$ 
\begin{equation*}
\lim_{n\rightarrow +\infty }\norm{\cL^{n}(f)}_{{s}}=0,
\end{equation*}
and the resolvent operator $(Id-\cL)^{-1}:V_{\fB}\rightarrow V_{\fB}$ is a bounded operator.
\end{proposition}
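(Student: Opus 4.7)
\textbf{Proof plan for Proposition \ref{prop:bdd_res}.} The idea is to exploit the spectral gap of \(\cL\) on \(\fB\) established in Lemma~\ref{lem:simple_ev} together with the fact that \(V_\fB\) is invariant under \(\cL\). First, I would note that \(V_\fB\) is \(\cL\)-invariant: by Lemma~\ref{lem:contraction}, \(\cL\) preserves the integral, so if \(\int f\, dm=0\) then \(\int \cL f\, dm=0\). Moreover, \(V_\fB\) is closed in \(\fB\) since it is the kernel of the continuous linear functional \(f\mapsto \int f\, dm\) on \(\fB\) (continuity follows from \(\norm{f}_1\le \norm{f}_{L^1_2}\le \norm{f}_s\)).

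Next, I would use the spectral decomposition provided by quasi-compactness. By Lemma~\ref{lem:simple_ev}, the operator \(\cL\) has a spectral decomposition \(\cL=P+N\) on \(\fB\), where \(P\) is the one-dimensional projection onto the eigenspace \(\langle f_0\rangle\) associated with the simple eigenvalue \(1\) (with \(f_0\) the unique invariant density from Proposition~\ref{prop:inv_den}), and \(N\) commutes with \(P\), satisfies \(PN=NP=0\), and has spectral radius \(\rho<1\). Since \(P\) must be of the form \(Pf=\left(\int f\, dm\right) f_0\) (this is the only rank-one projection onto \(\langle f_0\rangle\) commuting with \(\cL\) and consistent with integral preservation), we have \(P|_{V_\fB}=0\), so \(\cL|_{V_\fB}=N|_{V_\fB}\). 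By the spectral radius formula, there exist \(C>0\) and \(\rho'\in(\rho,1)\) such that
\[
\norm{\cL^n|_{V_\fB}}_{s\to s}\le C(\rho')^n
\qquad\text{for all }n\in\bN,
\]
which immediately gives \(\norm{\cL^n f}_s\to 0\) for every \(f\in V_\fB\).

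Finally, for the resolvent, since \(\rho'<1\) the Neumann series
\[
\sum_{n=0}^\infty \cL^n|_{V_\fB}
\]
converges in the operator norm on \(V_\fB\) and defines a bounded operator equal to \((Id-\cL)^{-1}\) on \(V_\fB\), with norm bounded by \(C/(1-\rho')\). In particular, \((Id-\cL)^{-1}\) maps \(V_\fB\) into itself because each \(\cL^n\) does so by the invariance observed above.

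The only genuinely delicate point is identifying the spectral projection \(P\) with integration against Lebesgue measure; if a cleaner route is preferred, one can bypass this step by invoking the abstract Hennion--Ionescu-Tulcea-Marinescu framework, which directly yields the decomposition \(\fB=\langle f_0\rangle\oplus V_\fB\) with \(V_\fB\) being the \(\cL\)-invariant complement on which \(\cL\) has spectral radius strictly less than one. Either way, the two claims of the proposition follow at once.
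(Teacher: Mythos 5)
Your proposal is correct and follows essentially the same route as the paper: both use the spectral-gap decomposition from Lemma~\ref{lem:simple_ev} into a rank-one projection plus a part with spectral radius less than one, observe that the projection vanishes on \(V_\fB\) so that \(\cL^n\) decays geometrically there, and obtain the resolvent via the Neumann series. If anything, you are slightly more careful than the paper in justifying why the spectral projection annihilates zero-average functions and in passing from spectral radius to a norm bound \(C(\rho')^n\), points the paper's proof leaves implicit.
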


\begin{proof}
  By Lemma~\ref{lem:simple_ev} and the fact that the operator is positive and integral preserving, we get that \(\cL:\fB\to\fB\) has spectral gap, which implies it can be decomposed as
\[
\cL=\Pi+{\mathcal Q}
\]
where \(\Pi\) is the projection operator onto the one-dimensional eigenspace corresponding to the eigenvalue 1, and the spectral radius of \({\mathcal Q}\) is strictly less than 1, that is \(\norm{{\mathcal Q}}_{s}<1\). \\ 
Furthermore, for \(f\in V_\fB\) we have \(\cL^n f={\mathcal Q}^n f\), and
\[
\norm{{\mathcal Q}^n f}_{s}\leq \norm{{\mathcal Q}^n}_{s}\norm{f}_{s} \leq Ce^{-\lambda n}\norm{f}_{s}
\]
for some $C\geq0 $ and $\lambda \in (0,1)$. The first claim follows directly by this estimate. The second claim also follows from the estimate, since the resolvent operator \((Id-\cL)^{-1}:V_\fB\to V_\fB\) can be written as \((Id-\cL)^{-1}=\sum_{n=0}^\infty \cL|_{V_\fB}^n\).
\end{proof}

\begin{remark}\label{R20}
We have seen that the transfer operator is quasicompact when acting on $L^1_2$. 
In this remark we discuss the optimality of the result  and the importance of $L^1_2$ to obtain such a result, showing that the transfer operator neither always have a spectral gap on $L^2$, nor is always a compact operator on $L^1_2$ and hence we cannot rely on a simple spectral perturbation theory for such operators.

For simplicity let us illustrate this in the particularly simple, but meaningful case where $d=1$ and  $b(x)=-x$.

We hence have the following SDE on the real line:
\[
dX_t = -X_t \, dt + dW_t.
\]
We will exploit the fact that for such an SDE there are explicit formulas for the evolution of densities through the transfer operator (see e.g. \cite{gardiner2004handbook}) when the initial condition is distributed as a Gaussian.

It is well known in fact that if for such an SDE the initial condition is
\[
X_0 \sim \mathcal{N}(\mu_0, \sigma_0^2),
\]
then \( X_t \) has Gaussian distribution for all \( t \geq 0 \), i.e.
\[
X_t \sim \mathcal{N} \left( \mu(t), \sigma^2(t) \right),
\]
where the average is
$$
    \mu(t) = \mathbb{E}[X_t] = \mu_0 e^{-t},
$$
and the variance is
$$
    \sigma^2(t) = \mathrm{Var}(X_t) = \sigma_0^2 e^{-2t} + \frac{1}{2} \left( 1 - e^{-2t} \right).
$$

Let $V_{L^2}=\{f\in L^2:~\int fdm=0\}$.
By a reasoning similar to the one done in the proof of Lemma \ref{lem:simple_ev} and Proposition  \ref{prop:bdd_res}, if the transfer operator $\mathcal{L}$ had a spectral gap on $L^2$, there was an iterate $\mathcal{L}^{n_0}$ with $n_0$ big enough such that 
$||\mathcal{L}^{n_0}|_{V_{L^2}}||_2\leq
\frac{1}{2}$.
Let $x_n \in \mathbb{R} $ be a sequence of points of the type $x_n= 2e^n$, and let us consider a sequence of $L^2$ distributions in the unit sphere $f_n= \frac{\mathcal{N}(x_n, 1)-\mathcal{N}(-x_n, 1)}{||\mathcal{N}(x_n, 1)-\mathcal{N}(-x_n, 1)||_2}$.  Now, $n_0$ big enough corresponds to $t$ large enough in the Fokker-Planck equation, leading 
to the fact that $L^{n_0}(f_n)$ is the difference of two Gaussian distributions with variance which is smaller than $1$  while the two averages diverge as $n \to \infty $.  
We have then that $\lim_{n\to \infty}\frac{||L^{n_0}(f_n)||_2}{||f_n||_2}\geq 1$, contradicting spectral gap.

Similarly we can remark that   $\mathcal{L}$ is not in general a compact operator when acting  on $L^1_2$ by noting that the sequence  $g_n=\frac{\mathcal{N}(x_n, 1)}{||\mathcal{N}(x_n, 1)||_{L^1_2}}$ is a bounded sequence  such that $\mathcal{L} g_n$ has no converging subsequences.
\end{remark}

\section{Perturbations and linear response}\label{sec:perturb}
From now on, we denote the original transfer operator \(\cL\) defined above by \(\cL_0\).\\
As mentioned in Section~\ref{sec:intro}, to perturb the initial stochastic differential equation \eqref{eq:system1} that we began with, we perturb the associated transfer operator \(\cL_0\) (defined in Section~\ref{sec:to}). Since the transfer operator $\cL_0$ associated with a time discretisation $t$ of the system is a kernel operator (see Theorem \ref{MPZ} and \eqref{eq:stoctransfer}), we perturb the operator by perturbing the associated kernel. We would like to emphasise the fact that this method of perturbation has the limitation that the perturbed system may not correspond to a SDE. Rather, it signifies a ``local'' change in the initial SDE, whose nature is independent of the SDE model. 

The local kernel perturbations are conducted in the following way:\\
let \(D\subset \bR^d\) be a compact set, and let \(\bar{\delta}>0\) be given, then for every \(\delta\in[0,\bar\delta)\), define the family \(\kappa_\delta\in L^2(D\times D)\) of kernels as 
\begin{equation}\label{eq:pert_kernel}
    \kappa_\delta=\kappa_0+\delta\cdot \dot{\kappa}+ r_\delta
\end{equation}
where \(\dot{\kappa},\ r_{\delta }\in L^{2}(D\times D)\) with \(\norm{r_{\delta }}_{2}=o(\delta )\). 

 Finally, define the linear operators (Hilbert-Schmidt integral operators) \(\dot \cL,\ \cL_\delta: L^1\to L^1\) by 
\begin{equation}\label{eq:diff_to}
    \dot \cL f(y):= \int \dot \kappa(x,y)f(x)dx,
\end{equation}

\begin{equation}\label{pert_to}
     \cL_\delta f(y):= \int \kappa_\delta(x,y)f(x)dx.
\end{equation}
Let us assume that the operators \(\cL_\delta\) are integral preserving for every \(\delta\in [0,\bar\delta)\), that is, for each \(g\in L^1\)
\[
\int\cL_\delta g\ dm=\int g\ dm,
\]
where \(m\) is the Lebesgue measure.

\begin{lemma}\label{lem:int_pr}
    For \(\delta\in[0,\bar\delta)\), the operator \(\cL_\delta\) is integral preserving if and only if both \(\dot \kappa\) and \(r_\delta\) are zero average in in \(y\) direction, that is, for almost each $x$, \(\int \dot \kappa(x,y)dy= \int r_\delta(x,y)dy=0\).

\end{lemma}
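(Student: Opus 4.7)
The plan is to evaluate $\int \cL_\delta f\,dm$ by Fubini, isolate how $\dot\kappa$ and $r_\delta$ contribute, and then exploit the asymptotic relation $\norm{r_\delta}_2 = o(\delta)$ to separate the two conditions. Explicitly, for any $f\in L^1$,
\[
\int \cL_\delta f\,dm \;=\; \int f(x)\left(\int \kappa_\delta(x,y)\,dy\right)dx.
\]
Since $\cL_0$ is integral preserving (Lemma~\ref{lem:contraction}), we have $\int \kappa_0(x,y)\,dy = 1$ for a.e.\ $x$. Hence $\cL_\delta$ being integral preserving for every $f\in L^1$ is equivalent, by a standard duality argument (taking $f$ to be indicators of Borel subsets of $D$), to the pointwise condition
\[
\delta\, A(x) + B_\delta(x) = 0 \quad \text{for a.e.\ } x\in D,
\]
where $A(x) := \int \dot\kappa(x,y)\,dy$ and $B_\delta(x) := \int r_\delta(x,y)\,dy$, both supported in $D$ since $\dot\kappa, r_\delta \in L^2(D\times D)$.

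The direction $(\Leftarrow)$ is immediate from this formulation: if $A \equiv 0$ and $B_\delta \equiv 0$ a.e.\ in $D$, then $\int \kappa_\delta(x,y)\,dy = 1$ a.e., so $\cL_\delta$ preserves the integral.

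For the direction $(\Rightarrow)$, the key step is a Cauchy--Schwarz estimate on $D$: since $D$ is compact,
\[
\norm{B_\delta}_{L^1(D)} \;\leq\; \int_D\!\int_D |r_\delta(x,y)|\,dy\,dx \;\leq\; m(D)\,\norm{r_\delta}_{L^2(D\times D)} \;=\; o(\delta).
\]
Applying this to the identity $\delta A = -B_\delta$ (which is the integral preservation condition for a fixed $\delta\in(0,\bar\delta)$) yields
\[
\norm{A}_{L^1(D)} \;=\; \frac{\norm{B_\delta}_{L^1(D)}}{\delta} \;=\; o(1) \text{ as } \delta\to 0.
\]
Since $A$ does not depend on $\delta$, this forces $A=0$ a.e.\ in $D$. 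Substituting back, $B_\delta = -\delta A = 0$ a.e.\ in $D$ for every $\delta\in[0,\bar\delta)$.

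The only potential obstacle is the separation step: for a single $\delta>0$, integral preservation alone only gives the combined identity $\delta A + B_\delta = 0$, not the two conditions individually. This is precisely where the hypothesis $\norm{r_\delta}_2 = o(\delta)$ is essential — it is what allows the asymptotic ``extraction'' of $A$ as the leading-order coefficient. The rest of the argument is routine Fubini and duality.
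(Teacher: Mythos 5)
Your proof is correct and follows essentially the same route as the paper's: reduce via Fubini/duality to the pointwise identity $\delta\int\dot\kappa(x,y)\,dy+\int r_\delta(x,y)\,dy=0$ a.e., then use $\norm{r_\delta}_2=o(\delta)$ and the limit $\delta\to0$ to extract $\int\dot\kappa(x,y)\,dy=0$ and hence $\int r_\delta(x,y)\,dy=0$. Your version of the separation step (passing to $L^1(D)$ norms via Cauchy--Schwarz rather than arguing pointwise) is if anything slightly cleaner than the paper's, and you correctly identify that integral preservation must be invoked along $\delta\to0$, not just for a single $\delta$.
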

\begin{proof} The converse part is straight forward, therefore we prove the forward implication.
    \(\cL_\delta\) is integral preserving if and only if, for every \(g\in L^1\)
    \begin{align*}
          \int g\ dm &= \int \cL_\delta g\ dm  \\
  & = \int \int \kappa_\delta(x,y)g(x)\ dx dy \\
  & = \int\int (\kappa_0+\delta\dot \kappa+r_\delta)(x,y) g(x)\ dxdy \\
  & = \int\int \kappa_0(x,y) g(x)\ dxdy + \int\int (\delta\dot \kappa+r_\delta)(x,y) g(x)\ dxdy\\
  & = \int\int g(x)\ dxdy + \int\int (\delta\dot \kappa+r_\delta)(x,y) g(x)\ dxdy
  \end{align*}
   thus, the above equality reduces to , for every \(g\in L^1\), 
   \[
   \int \int (\delta\dot \kappa+r_\delta)(x,y)g(x)\ dx dy = 0 = \int \int (\delta\dot \kappa+r_\delta)(x,y)g(x)\ dy dx.
   \]

Hence, for almost every \(x\in\bR^d\), 
\[0= \int (\delta\dot \kappa+r_\delta)(x,y)g(x)\ dy = \int \delta\dot \kappa(x,y)g(x)\ dy + \int r_\delta(x,y)g(x)\ dy.\]
   That is, 
   \[\int \dot \kappa(x,y)g(x)\ dy = -\int\frac{r_\delta}{\delta}(x,y)g(x)\ dy\]
   Taking limit \(\delta\to 0\) both sides, we get 
   \[
   \lim\limits_{\delta\to 0}\int\dot \kappa(x,y)g(x) \ dy=-\lim\limits_{\delta\to 0}\int\frac{r_\delta}{\delta}(x,y)g(x) \ dy =0
   \]
   Since so is true for all \(g\in L^1\), we get that for almost all \(x\in\bR^d\), \(\int\dot \kappa(x,y)dy=0\) which implies \(\int r_\delta(x,y)\ dy=0\).

\end{proof}

Now recall that by Proposition~\ref{prop:inv_den}, there exists an invariant density \(f_0\in\fB\) (the eigenvector corresponding to the eigenvalue 1) of the operator \(\cL_0\). Consequently, we have the following lemma.
\begin{lemma}\label{ldot2} 
The operators \(\dot \cL\) and \(\cL_\delta\) restricted to the strong space \(\fB\) satisfy the following limit
\begin{equation*}
\lim_{\delta \rightarrow 0}\norm{\frac{\cL_{\delta }-\cL_{0}}{\delta }f_{0}-%
\dot{\cL}f_{0}}_{s}=0
\end{equation*}
where \(f_0\) is the invariant density for the operator \(\cL_0\).
\end{lemma}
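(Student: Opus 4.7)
The plan is to reduce the claimed limit to a direct estimate on the remainder kernel $r_\delta$, exploiting that $r_\delta$ is supported in $D \times D$ (a bounded set). Writing out the definitions of $\cL_\delta$, $\cL_0$ and $\dot\cL$ gives
\[
\left(\tfrac{\cL_\delta-\cL_0}{\delta} f_0 - \dot\cL f_0\right)(y) \;=\; \int \tfrac{r_\delta(x,y)}{\delta}\, f_0(x)\,dx \;=:\; h_\delta(y),
\]
and since $r_\delta$ vanishes outside $D \times D$, the function $h_\delta$ is supported in $D$. Thus $\|h_\delta\|_s = \|h_\delta\|_{L^1_2} + \|h_\delta\|_2$, and I only need to show that both terms vanish as $\delta \to 0$.

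First I would handle the $L^2$ part. By Cauchy--Schwarz in the $x$-variable (applied to $(x,y) \in D \times D$),
\[
|h_\delta(y)|^2 \;\leq\; \tfrac{1}{\delta^2}\int_D |r_\delta(x,y)|^2\,dx \cdot \int_D |f_0(x)|^2\,dx.
\]
Integrating in $y$ over $D$ and using that $f_0 \in \fB$ gives $\|1_D f_0\|_2 < \infty$, hence
\[
\|h_\delta\|_2^2 \;\leq\; \tfrac{\|r_\delta\|_2^2}{\delta^2}\,\|1_D f_0\|_2^{\,2},
\]
which tends to $0$ because $\|r_\delta\|_2 = o(\delta)$.

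For the $\|\cdot\|_{L^1_2}$ term I use that $h_\delta$ is supported in the compact set $D$. Setting $C_D := \sup_{x\in D}\rho_2(|x|) < \infty$ and applying Cauchy--Schwarz once more yields
\[
\|h_\delta\|_{L^1_2} \;=\; \int_D \rho_2(|y|)\,|h_\delta(y)|\,dy \;\leq\; C_D\, m(D)^{1/2}\, \|h_\delta\|_2,
\]
which also tends to $0$ by the previous step. Combining the two estimates gives $\|h_\delta\|_s \to 0$, as required. I don't expect any serious obstacle here: everything reduces to the compact support of the perturbation, the hypothesis $\|r_\delta\|_2 = o(\delta)$, and the fact that invariant densities in $\fB$ are automatically square-integrable on $D$ (this is exactly the role of the second term in the definition of the strong norm \eqref{eq:strong_norm}).
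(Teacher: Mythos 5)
Your proposal is correct and follows essentially the same route as the paper: both reduce the difference quotient to the remainder term $\int \frac{r_\delta(x,y)}{\delta} f_0(x)\,dx$, use the compact support of $r_\delta$ in $D\times D$ to control the $L^1_2$ part of the strong norm by the $L^2$ part (you via $\sup_D\rho_2$ and Cauchy--Schwarz, the paper via H\"older with $\norm{1_D\rho_2}_2$), and conclude from the Hilbert--Schmidt bound together with $\norm{r_\delta}_2=o(\delta)$ and $\norm{1_D f_0}_2\leq\norm{f_0}_s<\infty$. No gaps.
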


\begin{proof}
By definition 
\[
\begin{split}
\norm{\frac{\cL_{\delta }-\cL_{0}}{\delta }f_{0}-\dot{\cL}f_{0}}_{s} &=\norm{\int \frac{\kappa_{\delta }(x,y)-\kappa_{0}(x,y)}{\delta }f_{0}(x)\
dx-\int \dot{\kappa}(x,y)f_{0}(x)\ dx}_{s} \\
&=\norm{\int \frac{r_{\delta }(x,y)}{\delta }f_{0}(x)\ dx}_{s}\\
&=\norm{\int \frac{r_{\delta }(x,y)}{\delta }f_{0}(x)\ dx}_{L^1_2}
     + \norm{1_D\int \frac{r_{\delta }(x,y)}{\delta }f_{0}(x)\ dx}_{2}.
\end{split}
\]
Now using Holder's inequality and the fact that $r_{\delta }$ has compact support, there exists $C>0$ such that 
\[
\begin{split}
    \norm{\frac{\cL_{\delta }-\cL_{0}}{\delta }f_{0}-\dot{\cL}f_{0}}_{s}
    &\leq\norm{1_D\rho_2}_2\norm{\int \frac{r_{\delta }(x,y)}{\delta }f_{0}(x)\ dx}_{2}+     \norm{\int \frac{r_{\delta }(x,y)}{\delta }f_{0}(x)\ dx}_{2}\\
    &\leq(\norm{1_D\rho_2}_2+1)\norm{\int \frac{r_{\delta }(x,y)}{\delta }f_{0}(x)\ dx}_{2}\\
    &\leq (\norm{1_D\rho_2}_2+1)\norm{1_Df_{0}}_{2} \norm{\frac{r_{\delta }}{\delta }}_{2}\\
    &\leq  \frac{C}{\delta }\norm{1_Df_{0}}_{2}\rightarrow 0
\end{split}
\]
where \(\rho_2\) is as described in \eqref{WF}, \(C>0\) is some constant and the second last inequality comes from \eqref{KF}.
\end{proof}
The above lemma implies that the operator \(\dot\cL\) can be thought of as the derivative, with respect to \(\delta\), of the operator \(\cL_\delta\).
Now, we are interested to consider the linear response of our systems to such perturbations. But first, let's understand how close the perturbed operators \(\cL_\delta\) are to the initial operator \(\cL_0\).

\begin{lemma}\label{lem:close}
Let $\cL_{\delta }$ be a family of transfer operators associated with the kernels $\kappa_{\delta }$ as before. Then, there exist constants $\overline{\delta }, C\geq 0$ such that for each $\delta\in \lbrack 0,\overline{\delta })$ 
\begin{equation}
\norm{\cL_{0}-\cL_{\delta }}_{\fB\rightarrow L^{1}}\leq C\delta. 
\end{equation}
\end{lemma}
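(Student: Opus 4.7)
The plan is to exploit two features: the perturbation kernel is supported in $D \times D$ (so outputs live in the compact set $D$ in the $y$ variable), and the norm on $\fB$ controls the $L^2(D)$ norm of its elements. Concretely, write the difference of operators as a kernel operator
\[
(\cL_\delta - \cL_0) f(y) = \int_D \bigl(\delta \dot\kappa(x,y) + r_\delta(x,y)\bigr) f(x)\, dx,
\]
which is supported on $D$ in the variable $y$. Since $D$ is compact, Cauchy--Schwarz gives $\norm{(\cL_\delta - \cL_0)f}_{1} = \norm{(\cL_\delta - \cL_0)f}_{L^1(D)} \leq m(D)^{1/2}\, \norm{(\cL_\delta - \cL_0)f}_{L^2(D)}$, so the task reduces to an $L^2$ estimate.

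Next I would apply the standard Hilbert--Schmidt bound \eqref{KF} to the integral operator whose kernel is $\delta \dot\kappa + r_\delta \in L^2(D \times D)$. Because the integral in $x$ only sees $f$ on $D$, I can replace $f$ by $1_D f$ on the input side, giving
\[
\norm{(\cL_\delta - \cL_0) f}_{L^2(D)} \leq \norm{\delta \dot\kappa + r_\delta}_{L^2(D\times D)}\, \norm{1_D f}_{2} \leq \bigl(\delta \norm{\dot\kappa}_2 + \norm{r_\delta}_2\bigr)\, \norm{1_D f}_2.
\]
By the definition \eqref{eq:strong_norm} of the strong norm, $\norm{1_D f}_2 \leq \norm{f}_s$.

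Finally I would use the hypothesis $\norm{r_\delta}_2 = o(\delta)$ to pick $\bar\delta>0$ small enough that $\norm{r_\delta}_2 \leq \delta$ for all $\delta \in [0,\bar\delta)$. Then $\delta \norm{\dot\kappa}_2 + \norm{r_\delta}_2 \leq \delta (\norm{\dot\kappa}_2 + 1)$, and combining all the estimates yields
\[
\norm{(\cL_\delta - \cL_0) f}_1 \leq m(D)^{1/2}\bigl(\norm{\dot\kappa}_2 + 1\bigr)\, \delta\, \norm{f}_s,
\]
which is the claim with $C := m(D)^{1/2}(\norm{\dot\kappa}_2 + 1)$. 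There is no serious obstacle here: the argument is essentially a combination of Cauchy--Schwarz (made possible by the compactness of $D$), the Hilbert--Schmidt bound already recorded as \eqref{KF}, and the qualitative assumption $\norm{r_\delta}_2 = o(\delta)$. The only point worth checking carefully is that supports are handled correctly, so that the passage from $L^1(\bR^d)$ to $L^2(D)$ does not lose information.
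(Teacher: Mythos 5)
Your proposal is correct and follows essentially the same route as the paper's proof: restrict to $D$ via Cauchy--Schwarz using compactness, apply the Hilbert--Schmidt bound \eqref{KF} to the kernel $\delta\dot\kappa + r_\delta$, bound $\norm{1_D f}_2$ by $\norm{f}_s$, and absorb $r_\delta$ using $\norm{r_\delta}_2 = o(\delta)$. The only difference is cosmetic: you make the constant $C$ explicit, while the paper leaves it implicit.
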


\begin{proof}
    Let \(f\in \fB\) be such that $ \norm{f}_s \leq 1$. Since \(\delta\dot \kappa+r_\delta\) is compactly supported on \(D\times D\), it implies that \((\delta\dot \kappa+r_\delta)f\) is also compactly supported in $D$. Using the fact that inside the set \(D\), by Cauchy Schwartz inequality, the norms satisfy \(\norm{\cdot}_{1}\leq C_1 \norm{\cdot}_{2}\), for some $C_1>0$, we have  
\[
\begin{split}
     \norm{(\cL_\delta-\cL_0)f}_{1}&= \norm{\int(\delta \dot{\kappa}+r_\delta)(x,y)f(x)dx}_{1}\\
    &\leq C_1 \norm{\int (\delta \dot{\kappa}+r_\delta)(x,y)f(x)dx}_{2}\\
    & =C_1\norm{\delta\dot \cL f + \int r_\delta(x,y)f(x)dx}_{2}\\
    & \leq \delta C_1\norm{\dot \kappa}_{2}\norm{f|_D}_{2}+C_1\norm{r_\delta}_{2}\norm{f|_D}_{2},
\end{split}
\]
where we have used that \(\dot{\kappa}, r_\delta\in L^2(D\times D)\) 
and in the last inequality we used \eqref{KF}. Since \(||r_\delta||_2=o(\delta)\), there exists \(C>0\) such that \(\norm{\delta C_1 \dot \kappa+ C_1 r_\delta }_{2}\leq C\delta\). Finally, using \(\norm{f|_D}_{2}\leq \norm{f}_s\leq 1\), we have the result.
\end{proof}

To obtain a result on linear response, we need the perturbed operators to possess certain properties, such as the Lasota-Yorke inequality, the existence of an invariant density, and a bounded resolvent. We prove these properties in the following lemmas.
 
\begin{lemma}\label{lem:LY_delta}
Let $\cL_{\delta }$ be the family of transfer operators associated with the kernels $\kappa_{\delta }$ as above. Then there exist constants $A,B,\overline{\delta }>0$, $\lambda<1$ such that for each $\delta \in \lbrack 0,\overline{\delta })$, $n\geq 0$ 
\begin{equation}
\norm{\cL_{\delta }^{n}f}_{s}\leq A\lambda ^{n}\norm{f}_{s}+B\norm{f}_{1} .
\end{equation}
\end{lemma}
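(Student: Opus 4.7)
The strategy is a Keller--Liverani-type perturbative argument built on Lemma~\ref{lem:LY-ineq} (Lasota--Yorke for $\cL_0$) and Lemma~\ref{lem:close}. A first preparatory step is to upgrade Lemma~\ref{lem:close} from a $\fB\to L^1$ estimate to a strong-to-strong estimate. Indeed, $P_\delta f:=(\cL_\delta-\cL_0)f$ is supported in the compact set $D$, so both parts of $\norm{\cdot}_s$ on $P_\delta f$ are controlled by $\norm{P_\delta f}_{L^2(D)}$, and the Hilbert--Schmidt bound $\norm{P_\delta f}_{L^2(D)}\leq\norm{\delta\dot\kappa+r_\delta}_2\,\norm{\mathbf{1}_D f}_2\leq C\delta\,\norm{f}_s$ gives $\norm{\cL_\delta-\cL_0}_{\fB\to\fB}\leq C\delta$. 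In particular, there is a uniform constant $\bar M$ with $\norm{\cL_\delta}_{\fB\to\fB}\leq\bar M$ for all $\delta\in[0,\bar\delta)$.

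The core of the proof is a telescoping estimate for a fixed high iterate. I would choose $n_0$ large enough that $A\lambda^{n_0}\leq 1/4$ in Lemma~\ref{lem:LY-ineq}, and expand
\[
\cL_\delta^{n_0}-\cL_0^{n_0}=\sum_{k=0}^{n_0-1}\cL_0^{\,n_0-1-k}\,P_\delta\,\cL_\delta^{\,k}.
\]
For each summand, the uniform $\fB\to\fB$ bound on $\cL_\delta^k$, the closeness $\norm{P_\delta}_{\fB\to\fB}=O(\delta)$, and the Lasota--Yorke of Lemma~\ref{lem:LY-ineq} applied to $\cL_0^{n_0-1-k}$ combine to give a total error of order $O(n_0\bar M^{n_0}\delta)\norm{f}_s$. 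Shrinking $\bar\delta$ if necessary makes this at most $\norm{f}_s/4$, and together with the Lasota--Yorke bound for $\cL_0^{n_0}$ this yields
\[
\norm{\cL_\delta^{n_0}f}_s\leq\tfrac{1}{2}\norm{f}_s+B'\norm{f}_1 \qquad\text{for all }\delta\in[0,\bar\delta).
\]

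To conclude, I would bootstrap this $n_0$-step inequality to arbitrary $n$. Writing $n=q n_0+r$ with $0\leq r<n_0$ and iterating the super-step inequality $q$ times, while absorbing the residual $\cL_\delta^r$ via $\norm{\cL_\delta^r}_{\fB\to\fB}\leq\bar M^{n_0}$, produces a bound of the form $A\lambda^n\norm{f}_s+B\sum_{j=0}^{q-1}(1/2)^j\norm{\cL_\delta^{jn_0+r}f}_1$. The main obstacle is controlling these weak-norm terms: unlike $\cL_0$, the operator $\cL_\delta$ is not a priori an $L^1$-contraction, since the perturbation can destroy positivity. I would handle this by combining the super-step inequality with the auxiliary estimate $\norm{\cL_\delta g}_1\leq\norm{g}_1+C\delta\norm{g}_s$ (which follows from integral preservation of $\cL_\delta$ together with Lemma~\ref{lem:close}), and analyzing the coupled recursion on $(\norm{\cL_\delta^{jn_0}f}_s,\norm{\cL_\delta^{jn_0}f}_1)$. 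The contracting factor $1/2$ dissipates the strong norm, so the perturbative growth of the $L^1$ part stays bounded and an induction extracts the desired uniform constants $A$, $B$ with $\lambda<1$.
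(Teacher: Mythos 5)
Your proposal is correct and follows essentially the same route as the paper: a strong-to-strong closeness bound $\norm{\cL_\delta-\cL_0}_{\fB\to\fB}=O(\delta)$ coming from the compact support of the perturbing kernel and the Hilbert--Schmidt estimate, a comparison of $\cL_\delta^{n}$ with $\cL_0^{n}$ (the paper asserts $\norm{\cL_\delta^{n}f-\cL_0^{n}f}_s\leq\delta C(n)\norm{f}_s$, which is exactly your telescoping sum), a super-step inequality obtained by choosing $n_1$ large and then $\delta$ small, and a bootstrap to all $n$. If anything you are more careful than the paper at the final stage: the paper simply asserts the iterated bound $\norm{\cL_{\delta}^{mn_{1}}f}_{s}\leq(2/3)^m\norm{f}_{s}+(B+2)\norm{f}_{1}$, whereas you correctly flag that the weak-norm terms $\norm{\cL_\delta^{jn_0}f}_1$ need an auxiliary estimate (since $\cL_\delta$ is not a priori positive, hence not obviously an $L^1$-contraction) and sketch the coupled recursion that closes the argument.
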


\begin{proof}
We have that 
\begin{eqnarray*}
\norm{\cL_{\delta }f-\cL_{0}f}_{s} &= &\norm{\cL_{0}f+\delta
\int \dot{\kappa}(x,y)f(x)dx+\int {r_{\delta }(x,y)}f(x)\
dx-\cL_{0}f}_{s} \\
&= &\norm{\int (\delta \dot{\kappa}(x,y)+r_{\delta }(x,y))f(x)dx}_{s} \\
& \leq &\norm{\int \left( \delta \dot{\kappa}+ r_\delta \right) f(x) dx}_{L^1_2} + {\norm{1_D\int ( \delta \dot{\kappa}+ r_\delta ) f(x) dx}_{2}}\\
&\leq  &(\norm{1_D\rho_2}_2+1)\norm{\int(\delta \dot{\kappa}+ r_\delta)f(x)dx}_2.
\end{eqnarray*}%
 where in the last inequality we have used Holder's inequality and the fact that $\dot{\kappa}(x,y)$ and $ {r_{\delta }(x,y)}$ are supported only in $D$. Now, using \eqref{KF} and the fact that \(\norm{r_\delta}_2=o(\delta)\), there exists \( C_1>0\) such that
\begin{equation}\label{eq:diff_1}
    \norm{\cL_{\delta }f-\cL_{0}f}_{s} \leq \delta
C_{1}\norm{1_{D}f}_{{2}} \leq \delta C_{1}\norm{f}_{s}.
\end{equation}

Hence, there exists $C(n)\geq 0$ (depending on $n$) such that%
\begin{equation*}
\norm{\cL_{\delta }^{n}f-\cL_{0}^{n}f}_{s}\leq \delta
C(n)\norm{f}_{s}.
\end{equation*}

By Theorem \ref{lem:LY-ineq}
\begin{eqnarray*}
\norm{\cL_{\delta }^{n}f}_{s} &\leq&\norm{\cL_{\delta }^{n}f-\cL_0^n f}_{s}+\norm{\cL_{0 }^{n}f}_{s}\\
&\leq&\norm{\cL_{0}^{n}f}_{s}+\delta C(n)\norm{f}_{s} \\
&\leq &A\lambda ^{n}\norm{f}_{s}+B\norm{f}_{{1}}+\delta C(n)\norm{f}_{s}\\
&\leq &(A\lambda ^{n}+\delta C(n))\norm{f}_{s}+B\norm{f}_{{1}}.
\end{eqnarray*}%

Taking $n_{1}$ large enough, so that $A\lambda ^{n_{1}}\leq\frac{1}{3}$ and $\delta\in [0,\bar\delta)$ so small such that  $\delta C(n_1)\leq\frac{1}{3}$, we get 
\begin{equation*}
\norm{\cL_{\delta}^{n_{1}}f}_{s}\leq
\frac23\norm{f}_{s}+B\norm{f}_{{1}}.
\end{equation*}
Then, for every \(m\in \bN\), we get 
\[\norm{ \cL_{\delta }^{mn_{1}}f}_{s}\leq \left(\frac 23\right)^m\norm{
f}_{s}+(B+2)\norm{f}_{{1}}.\]
Similarly, for any \(n<n_1\), \(\norm{\cL^{n_1+n}_\delta f}_s\leq \frac{2}{3}(3\delta C_1+A\lambda)^n\norm{f}_s + C_2\norm{f}_{1}\), proving a uniform Lasota-Yorke inequality for $\cL_{\delta }.$
\end{proof}

\begin{proposition}\label{prop:inv_msr}
For the operators \(\cL_\delta\), and for $\delta $ small enough, there exists a unique invariant density $f_{\delta }\in \fB$ with $\int f_{\delta }=1$, that is, \(L_{\delta }f_{\delta }=f_{\delta }\).
  \end{proposition}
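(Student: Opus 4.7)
The plan is to obtain existence and uniqueness of $f_\delta$ via a perturbative spectral argument, combining the uniform Lasota-Yorke inequality of Lemma \ref{lem:LY_delta} with the closeness estimate of Lemma \ref{lem:close}.

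First I would verify that each $\cL_\delta$ is quasi-compact on $\fB$. For $\cL_0$ this was achieved in Lemma \ref{lem:simple_ev} through Hennion's theorem, using Lemma \ref{lem:LY-ineq} and the compact mapping property of Proposition \ref{prop:cpt_incl}. For $\cL_\delta$ the Lasota-Yorke piece is Lemma \ref{lem:LY_delta}, while the compact mapping property extends to $\cL_\delta$ because the perturbation $\cL_\delta-\cL_0$ is an integral operator with an $L^2$ kernel supported in $D\times D$; it therefore sends the unit ball of $\fB$ into an $L^2$-bounded set of functions supported on $D$, hence into a bounded subset of $BV_2$. Combined with the compact embedding $BV_2\hookrightarrow L^1$ of Proposition \ref{thm:embeddingRD} and the corresponding property for $\cL_0$, Hennion's theorem applies uniformly in $\delta$ and yields essential spectral radius at most the constant $\lambda<1$ from the Lasota-Yorke bound.

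Next I would invoke the Keller-Liverani perturbation theorem, whose hypotheses are exactly the uniform Lasota-Yorke inequality and the weak-norm closeness $\norm{\cL_\delta-\cL_0}_{\fB\rightarrow L^1}\leq C\delta$ from Lemma \ref{lem:close}. Since $\cL_0$ has $1$ as a simple isolated eigenvalue with spectral gap (Lemma \ref{lem:simple_ev}), Keller-Liverani gives, for $\delta$ small, a simple eigenvalue $\lambda_\delta$ of $\cL_\delta$ near $1$, together with convergence of the associated one-dimensional spectral projectors $\Pi_\delta\to\Pi_0$ in the $\fB\to L^1$ operator norm.

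To identify $\lambda_\delta=1$ I would use integral preservation of $\cL_\delta$. For any nonzero eigenvector $f_\delta\in\mathrm{Range}(\Pi_\delta)$,
\[
\int f_\delta \;=\; \int\cL_\delta f_\delta \;=\; \lambda_\delta\int f_\delta.
\]
The projector convergence $\Pi_\delta f_0\to\Pi_0 f_0=f_0$ in $L^1$ shows that for $\delta$ small, $\Pi_\delta f_0$ is nonzero and $\int\Pi_\delta f_0\to 1\neq 0$; taking $f_\delta=\Pi_\delta f_0$ and normalizing gives $\lambda_\delta=1$ and $\int f_\delta=1$. Uniqueness of the invariant density then follows from the one-dimensionality of $\Pi_\delta$. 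I expect the main subtlety to be the uniform extension of Proposition \ref{prop:cpt_incl} to $\cL_\delta$ and ensuring nonvanishing of $\int f_\delta$; the first is handled by the compact support of the perturbation kernel, the second by the Keller-Liverani projector convergence.
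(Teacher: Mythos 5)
Your proposal follows essentially the same route as the paper: the uniform Lasota--Yorke inequality of Lemma~\ref{lem:LY_delta} together with the closeness estimate of Lemma~\ref{lem:close} feed into the Keller--Liverani theorem \cite{KL}, which produces a simple isolated eigenvalue \(\lambda_\delta\) near \(1\); integral preservation then forces \(\lambda_\delta=1\), and simplicity of the eigenvalue gives uniqueness. The only substantive difference is how \(\lambda_\delta=1\) is pinned down: the paper observes that \(1\) always lies in the spectrum of \(\cL_\delta\) because the dual operator fixes the constant function, whereas you argue through the eigenvector, using \(\int f_\delta=\lambda_\delta\int f_\delta\) and the projector convergence \(\Pi_\delta f_0\to f_0\) in \(L^1\) to rule out \(\int f_\delta=0\); both are standard and correct. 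One step of yours is wrong as written, though harmlessly so: an \(L^2(D)\)-bounded family of functions supported on \(D\) need \emph{not} be bounded in \(BV_2\), since the oscillation seminorm in \eqref{def:BValpha} is not controlled by the \(L^2\) norm (consider \(\sin(ny)\,1_D\)), so your justification of the compact mapping property for \(\cL_\delta\) via Proposition~\ref{thm:embeddingRD} does not go through. Fortunately this step is dispensable: the Keller--Liverani theorem does not require a priori quasi-compactness of the perturbed operators (it delivers the spectral picture of \(\cL_\delta\) outside the disc of radius \(\lambda\) as a conclusion), and if one does want \(\cL_\delta(B)\) precompact in \(L^1\), it follows instead from the fact that the perturbation \(\cL_\delta-\cL_0\) is a Hilbert--Schmidt, hence compact, operator on \(L^2(D)\), composed with the bounded map \(f\mapsto 1_Df\) from \(\fB\) to \(L^2(D)\).
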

\begin{proof}
Firstly, recall that for all \(\delta\in[0,\delta)\), the operators \(\cL_\delta\) are integral preserving, which implies 1 lies in the spectrum of \(\cL_\delta\). Indeed, consider the dual operators \(\cL_\delta^*\); the fact that the operators are integral preserving implies that the Lebesgue measure is invariant for the dual operator \(\cL_\delta^*\). Accordingly, \(\cL_\delta^*1=1\), that is, 1 is in the spectrum of the dual operators and hence in the spectrum of the operators \(\cL_\delta\) for all \(\delta\geq 0\). \\
 Now, recall that by Lemma~\ref{lem:simple_ev}, 1 is the only eigenvalue of \(\cL_0\) on the unit circle.
Observe that for every \(\delta\in[0,\bar\delta)\), the operator \(\cL_\delta\) is a weak contraction on \(L_1\) and satisfies Lasota-Yorke inequality (by Lemma~\ref{lem:LY_delta}). Further, using Lemma~\ref{lem:close}, and the fact that \(\fB\) is compactly immersed inside \(L^1\) (Lemma~\ref{prop:cpt_incl}), we get, using \cite[Theorem 1]{KL}, that the isolated eigenvalues of \(\cL_\delta\) are arbitrarily close to that of \(\cL_0\), that is, the leading eigenvalue \(\lambda_\delta\) of \(\cL_\delta\) is also simple and satisfies that 
\[
\lim\limits_{\delta\to 0}\lambda_\delta = 1,
\] 
 which, in turn, implies that \(\lambda_\delta=1\) for small enough \(\delta\in[0,\bar\delta)\). Accordingly, there exists \(f_\delta\in \fB\) such that \(\cL_\delta f_\delta=f_\delta\) satisfying \(\int f_{\delta }=1\) being a probability density. Note that such an invariant density is unique. Indeed, if there exists another invariant density \(g_\delta\in\fB\), then note that \(\int(f_\delta-g_\delta)=\int f_\delta-\int g_\delta=1-1=0\), which implies 
 \[
 \int |f_\delta-g_\delta|=0=\norm{f_\delta-g_\delta}_1.
 \]
  Accordingly, using \eqref{KF2},
 \[
 \begin{split}
     \norm{f_\delta-g_\delta}_\infty
     &=\norm{\cL_\delta f_\delta-\cL_\delta g_\delta}_\infty\\
     &= \norm{\cL_\delta(f_\delta-g_\delta)}_\infty\\
     &\leq \norm{\kappa_\delta}_\infty\norm{f_\delta-g_\delta}_1\\
     &\leq \norm{\kappa_\delta}_\infty\norm{f_\delta-g_\delta}_1\\
     &= 0.
 \end{split}
 \]
 Hence the uniqueness.
\end{proof}

To be able to state the linear response result, the last ingredient we need is the bounds on the resolvent of perturbed operators, and hence the following lemma. Recall the definition of \(V_\fB\) and \(V_{L^1}\) given by \eqref{V_B} and \eqref{V_s} respectively.

\begin{lemma}\label{lem:pert_res}
For \(\delta>0\) small enough, the resolvent operators of the perturbed operators \(\cL_\delta\), given by \((Id-\cL_\delta)^{-1}\), are well-defined and satisfy
\[\norm{(Id-\cL_\delta)^{-1})}_{V_\fB\to V_{L^1}}<+\infty\]
and 
\[
\lim\limits_{\delta \to 0}\norm{(Id-\cL_\delta)^{-1}-(Id-\cL_\delta)^{-1}}_{V_\fB\to V_{L^1}}=0.
\]
\end{lemma}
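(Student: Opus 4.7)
The plan is to combine the uniform spectral stability coming from the Keller--Liverani machinery (already invoked in Proposition~\ref{prop:inv_msr}) with the second resolvent identity, after strengthening Lemma~\ref{lem:close} so that \(\cL_\delta-\cL_0\) maps \(\fB\) into \(\fB\) with operator norm \(O(\delta)\), not merely into \(L^1\). I interpret the second display in the statement as \(\norm{(Id-\cL_\delta)^{-1}-(Id-\cL_0)^{-1}}_{V_\fB\to V_{L^1}}\to 0\), correcting what appears to be a typo.

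First I would establish a uniform resolvent bound. By Lemma~\ref{lem:LY_delta}, Lemma~\ref{lem:close}, and the compact embedding of Proposition~\ref{prop:cpt_incl}, the Keller--Liverani theorem \cite{KL} yields a uniform spectral decomposition: for all small \(\delta\) the spectrum of \(\cL_\delta\) on \(\fB\) is contained in \(\{1\}\cup\overline{D(0,\rho)}\) for a fixed \(\rho\in(0,1)\), with \(1\) a simple eigenvalue. The associated spectral projection \(\Pi_\delta f=(\int f\,dm)\,f_\delta\) has kernel exactly \(V_\fB\), so \(\cL_\delta\) leaves \(V_\fB\) invariant and \(\norm{\cL_\delta^n|_{V_\fB}}_s\leq C\rho^n\) uniformly in \(\delta\). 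The Neumann series
\[
R_\delta:=(Id-\cL_\delta)^{-1}|_{V_\fB}=\sum_{n\geq 0}\cL_\delta^n|_{V_\fB}
\]
then converges in \(V_\fB\to V_\fB\) operator norm with a uniform bound; composing with the continuous embedding \(V_\fB\hookrightarrow V_{L^1}\) proves the first claim of the lemma.

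Next I would upgrade the closeness estimate. Since \(\delta\dot\kappa+r_\delta\in L^2(D\times D)\) is supported in \(D\times D\), the operator \(\cL_\delta-\cL_0\) sends any \(g\in\fB\) to a function supported in \(D\) lying in \(L^2(D)\), with the Hilbert--Schmidt bound \(\norm{(\cL_\delta-\cL_0)g}_{L^2(D)}\leq\norm{\delta\dot\kappa+r_\delta}_2\,\norm{1_Dg}_2\leq C\delta\norm{g}_s\) via \eqref{KF} and \(\norm{r_\delta}_2=o(\delta)\). Cauchy--Schwarz on the compact set \(D\) gives \(\norm{\cdot}_{L^1_2}\leq C_D\norm{\cdot}_{L^2(D)}\) for functions supported in \(D\), so combining both summands of \eqref{eq:strong_norm} yields the sharper estimate
\[
\norm{(\cL_\delta-\cL_0)g}_s\leq C_1\delta\norm{g}_s.
\]

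Finally I would apply the second resolvent identity \(R_\delta-R_0=R_\delta(\cL_\delta-\cL_0)R_0\) to \(f\in V_\fB\). Proposition~\ref{prop:bdd_res} gives \(R_0f\in V_\fB\) with \(\norm{R_0f}_s\leq C_0\norm{f}_s\); the previous paragraph gives \((\cL_\delta-\cL_0)R_0f\in\fB\) with strong norm \(\leq C_1C_0\delta\norm{f}_s\), and this element lies in \(V_\fB\) because both \(\cL_\delta\) and \(\cL_0\) are integral preserving. The first step then bounds \(\norm{R_\delta(\cL_\delta-\cL_0)R_0f}_s\leq C'\delta\norm{f}_s\), and \(\norm{\cdot}_1\leq\norm{\cdot}_s\) delivers \(\norm{R_\delta-R_0}_{V_\fB\to V_{L^1}}=O(\delta)\). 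The main obstacle is the middle step: the bare Keller--Liverani resolvent convergence only outputs an \(\fB\to L^1\) estimate, but here \(R_\delta\) must act on the perturbation increment, forcing an upgrade of Lemma~\ref{lem:close} to an \(\fB\to\fB\) bound; this is possible precisely because the perturbation kernels are square integrable and compactly supported in \(D\).
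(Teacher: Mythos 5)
Your proof is correct, but it takes a genuinely different route from the paper. The paper's entire proof is one sentence: it checks the hypotheses of the Keller--Liverani theorem (uniform Lasota--Yorke from Lemma~\ref{lem:LY_delta}, the weak-norm closeness $\norm{\cL_0-\cL_\delta}_{\fB\to L^1}\leq C\delta$ from Lemma~\ref{lem:close}) and cites \cite[Theorem~1]{KL}, which directly outputs both the uniform boundedness of the resolvents and their convergence in the mixed $V_\fB\to V_{L^1}$ norm. You instead use Keller--Liverani only for the uniform spectral gap on $V_\fB$, then upgrade the closeness estimate to $\norm{(\cL_\delta-\cL_0)g}_s\leq C\delta\norm{g}_s$ and conclude via the second resolvent identity. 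Two remarks. First, the upgrade you flag as ``the main obstacle'' is not new: it is exactly inequality \eqref{eq:diff_1}, already established inside the paper's proof of Lemma~\ref{lem:LY_delta} by the same Hilbert--Schmidt-plus-compact-support argument you give. Second, once that strong-norm estimate is in hand, your route buys strictly more than the paper's: you get an explicit $O(\delta)$ rate and convergence in the stronger $V_\fB\to V_\fB$ operator norm, and in fact the Keller--Liverani input becomes dispensable for this particular lemma, since $(Id-\cL_\delta)|_{V_\fB}=(Id-\cL_0)\bigl[Id-(Id-\cL_0)^{-1}(\cL_\delta-\cL_0)\bigr]|_{V_\fB}$ is invertible with uniformly bounded inverse by a Neumann series, using only Proposition~\ref{prop:bdd_res} and the fact that $\cL_\delta-\cL_0$ preserves $V_\fB$ (both operators preserve integrals). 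The trade-off is that the paper's citation-based proof is shorter and reuses hypotheses already verified for Proposition~\ref{prop:inv_msr}, while yours is self-contained and quantitative. You are also right that the second display in the statement contains a typo and should read $(Id-\cL_\delta)^{-1}-(Id-\cL_0)^{-1}$.
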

\begin{proof}
    Note that the operators \(\cL_\delta\) are weak contractions on \(L_1\), satisfy Lasota-Yorke inequality (by Lemma~\ref{lem:LY_delta}), and, by the fact that \(\norm{\cL_0-\cL_\delta}_{\fB\to L^1}\leq C\delta\) (by Lemma\ref{lem:close}), we can use \cite[Theorem 1]{KL} to get the result.
\end{proof}
 Finally, we can state a response result adapted to our kind of systems and perturbations.

\begin{theorem}\label{thm:res_diff}
     For the operators \(\cL_\delta\), for $\delta >0$ small enough, we have linear response, that is
\begin{equation*}
\lim_{\delta \rightarrow 0}\norm{\frac{f_{\delta }-f_{0}}{\delta }
-(Id-\cL_{0})^{-1}\dot{\cL}f_{0}}_{1}=0.
\end{equation*}
\end{theorem}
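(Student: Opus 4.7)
The plan is to carry out the classical derivation of the linear response formula and then control the error by invoking the technical ingredients already established. Starting from the two fixed-point equations $\cL_\delta f_\delta = f_\delta$ and $\cL_0 f_0 = f_0$, subtracting and adding $\cL_\delta f_0$ yields the key identity
\begin{equation*}
(Id - \cL_\delta)(f_\delta - f_0) = (\cL_\delta - \cL_0)\, f_0.
\end{equation*}
Both sides lie in $V_\fB$: the left-hand side since $f_\delta, f_0$ are probability densities in $\fB$ by Propositions~\ref{prop:inv_den} and~\ref{prop:inv_msr}, and the right-hand side since the perturbation kernel has zero $y$-average (Lemma~\ref{lem:int_pr}) and compact support in $D\times D$. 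Invoking Lemma~\ref{lem:pert_res}, which provides a bounded inverse $(Id-\cL_\delta)^{-1}\colon V_\fB \to V_{L^1}$ for $\delta$ small, and dividing by $\delta$, I obtain
\begin{equation*}
\frac{f_\delta - f_0}{\delta} \;=\; (Id - \cL_\delta)^{-1}\,\frac{\cL_\delta - \cL_0}{\delta}\, f_0.
\end{equation*}

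Next I would subtract $(Id-\cL_0)^{-1}\dot\cL f_0$ and decompose the result as $E_1(\delta) + E_2(\delta)$, where
\begin{equation*}
E_1(\delta) = \bigl[(Id-\cL_\delta)^{-1} - (Id-\cL_0)^{-1}\bigr]\,\tfrac{\cL_\delta - \cL_0}{\delta}\, f_0,
\end{equation*}
\begin{equation*}
E_2(\delta) = (Id-\cL_0)^{-1}\Bigl[\tfrac{\cL_\delta - \cL_0}{\delta} f_0 - \dot\cL f_0\Bigr].
\end{equation*}
For $E_2$, Lemma~\ref{ldot2} gives $\norm{\tfrac{\cL_\delta - \cL_0}{\delta} f_0 - \dot\cL f_0}_s \to 0$, and Proposition~\ref{prop:bdd_res} together with the continuous embedding $\fB \hookrightarrow L^1$ makes $(Id-\cL_0)^{-1}\colon V_\fB \to L^1$ bounded, so $\norm{E_2(\delta)}_1 \to 0$. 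For $E_1$, the operator-norm convergence $\norm{(Id-\cL_\delta)^{-1} - (Id-\cL_0)^{-1}}_{V_\fB \to V_{L^1}} \to 0$ from Lemma~\ref{lem:pert_res} is combined with the uniform boundedness of $\norm{\tfrac{\cL_\delta - \cL_0}{\delta} f_0}_s$, itself an immediate consequence of Lemma~\ref{ldot2}, giving $\norm{E_1(\delta)}_1 \to 0$.

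The main bookkeeping hurdle is ensuring that each object to which a resolvent is applied genuinely lies in $V_\fB$, so that Lemmas~\ref{ldot2} and~\ref{lem:pert_res} can legitimately be invoked. This reduces to observing that the perturbation kernels $\dot\kappa$ and $r_\delta$, being supported in $D\times D$ with zero $y$-integral (Lemma~\ref{lem:int_pr}), produce outputs that are compactly supported in $D$, belong to $L^2(D)$, and have zero total mass --- precisely the conditions defining $V_\fB$. Once this is in place, combining the two operator-norm bounds above yields the claimed convergence of $\tfrac{f_\delta - f_0}{\delta}$ to $(Id - \cL_0)^{-1} \dot\cL f_0$ in $L^1$.
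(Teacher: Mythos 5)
Your proposal is correct and follows essentially the same route as the paper's own proof: the identical fixed-point identity $(Id-\cL_\delta)(f_\delta-f_0)=(\cL_\delta-\cL_0)f_0$, the same resolvent decomposition into the two error terms controlled by Lemma~\ref{lem:pert_res} and Lemma~\ref{ldot2}, and the same attention to membership in $V_\fB$. Your write-up is, if anything, slightly more explicit about the second error term, which the paper handles implicitly via the boundedness of $(Id-\cL_0)^{-1}$.
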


Though in this work we consider operators which are not necessarily positive, the proof of the above statement is similar to many other linear response results (see e.g. \cite{FG23} Theorem 12, Appendix A).
We include it for completeness.

{
 \begin{proof}
 By Proposition~\ref{prop:inv_msr}, we know that for each \(\delta\in[0,\bar\delta)\), the operator \(\cL_\delta\) has a fixed point \(f_\delta\). Accordingly, we get
 \[
 \begin{split}
     (Id-\cL_\delta)\frac{f_\delta-f_0}{\delta}&=\frac{f_\delta-f_0}{\delta}-\frac{\cL_\delta f_\delta-\cL_\delta f_0}{\delta}\\
     &= \frac{-f_0+\cL_\delta f_0}{\delta}\\
     &= \frac{1}{\delta}(\cL_\delta-\cL_0)f_0.
 \end{split}
 \]    
 By the preservation of integral, for each $\delta ,$ $L_{\delta }$
 preserves $V_{\fB}$. Since for all $\delta >0$, $\frac{L_{\delta }-L_{0}}{\delta }f_{0}\in V_{\fB}$ and by Lemma~\ref{lem:pert_res}, for $\delta $ small enough, $(Id-L_{\delta})^{-1}:V_{\fB}\rightarrow V_{L^1}$ is a uniformly bounded operator. Accordingly, we can apply the resolvent to both sides of the expression above to get
 \begin{align}\label{xx1}
 (Id-\cL_\delta )^{-1}(Id-\cL_\delta )\frac{f_\delta -f_0}{\delta }
 &=(Id-\cL_\delta )^{-1}\frac{\cL_\delta -\cL_0}{\delta }f_{0}  
\\
 &=[(Id-\cL_{\delta })^{-1}-(Id-\cL_{0})^{-1} + (Id-\cL_{0})^{-1}]\frac{\cL_{\delta }-\cL_{0}}{\delta }f_{0}.
 \end{align}
 Since, by Lemma~\ref{lem:pert_res}, $\norm{(Id-\cL_{\delta })^{-1}-(Id-\cL_{0})^{-1}}_{V_{\fB}\rightarrow V_{L^1}}\rightarrow 0$, we have 
 \begin{align*}
 \norm{[(Id-\cL_\delta)^{-1}-(Id-\cL_0)^{-1}]\frac{\cL_\delta-\cL_0}{\delta}f_0}_{L^1} &\leq \norm{(Id-\cL_\delta)^{-1}-(Id-\cL_0)^{-1}}_{V_\fB\to V_{L^1}}\norm{\frac{\cL_\delta-\cL_0}{\delta}f_0}_s\\
 &\rightarrow 0.
 \end{align*}
 Further, Lemma~\ref{ldot2} implies $\lim_{\delta \rightarrow 0}\frac{\cL_\delta-\cL_0}{\delta }f_0$
 converges in $V_\fB,$ then \eqref{xx1} implies that
 \begin{align*}
 0&=\lim_{\delta \rightarrow 0}\norm{\frac{f_\delta-f_0}{\delta } -(Id-\cL_0)^{-1}\frac{\cL_\delta-\cL_0}{\delta }f_0}_{L^1} \\
 &=\lim_{\delta \rightarrow 0}\norm{\frac{f_\delta-f_0}{\delta }-(Id-\cL_0)^{-1}(\dot{\cL}f_0)}_{L^1}.
 \end{align*}
 \end{proof}
}

\section{Optimisation of response}\label{sec:optimal}

 \subsection{Optimization of the expectation of an observable} \label{sec:opt_exp} 
Consider the set \(P\subset L^2(D\times D)\) of allowed infinitesimal perturbations $\dot{\kappa}$ (as described in Section~\ref{sec:perturb}) to the kernel $\kappa_0$. Fix an observable $\phi\in L^\infty$; we are interested in finding, from the set of allowed perturbations, an \emph{optimal} perturbation $\dot{\kappa}_{opt}$ which maximises the rate of change of the expectation of \(\phi\). To perform the optimisation, we assume that \(P\) is a bounded, closed and convex subset of the Hilbert space \( L^2(D\times D)\). \\
We believe that the above hypotheses on \(P\) are natural; convexity is so because if two different perturbations of a system are possible, then their convex combination -- applying the two perturbations with different intensities -- should also be possible.\\

Let $\phi\in L^{\infty}(\bR^d,\mathbb{R})$ be an observable. If $\phi$ were the indicator function of  a certain set, for example, one could control the invariant density towards the support of $\phi$. \\
Given a family of kernels $\kappa_{\delta }=\kappa_0+\delta\dot{\kappa}+r_\delta$ (as in Lemma~\ref{ldot2}) with associated transfer operators \(\cL_\delta\) and invariant densities $f_{\dot{\kappa},\delta }$, we denote the response of the system to $\dot{\kappa}$ by 
\begin{equation}\label{R}
R(\dot{\kappa})=\lim\limits_{\delta \rightarrow 0}\frac{f _{\dot{\kappa},\delta}-f _{0}}{\delta }.
\end{equation}%
This limit converges in $L^{1}$ as proved in Theorem \ref{thm:res_diff}.
\begin{lemma}\label{lem:R_cty}
    The operator \(R:L^2(D\times D)\to L^1\) defined in \eqref{R} is continuous.
\end{lemma}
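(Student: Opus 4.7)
The approach is to use the explicit formula from Theorem~\ref{thm:res_diff}, namely $R(\dot\kappa) = (Id-\cL_0)^{-1}\dot\cL f_0$, where $\dot\cL$ is the integral operator with kernel $\dot\kappa$. Since $R$ is linear in $\dot\kappa$, continuity reduces to a bound of the form $\norm{R(\dot\kappa)}_1 \leq C\norm{\dot\kappa}_{L^2(D\times D)}$.

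First I would establish a Hilbert--Schmidt style estimate controlling $\norm{\dot\cL f_0}_s$ by $\norm{\dot\kappa}_{L^2(D\times D)}$. Since $\dot\kappa$ is supported in $D\times D$, the function $\dot\cL f_0$ is supported in $D$, and the Cauchy--Schwarz inequality gives, for $y\in D$,
\[
|\dot\cL f_0(y)|^2 \leq \left(\int_D |\dot\kappa(x,y)|^2\,dx\right)\norm{1_D f_0}_2^2.
\]
Integrating in $y\in D$ and applying Fubini yields $\norm{1_D\dot\cL f_0}_2 \leq \norm{f_0}_s\,\norm{\dot\kappa}_{L^2(D\times D)}$. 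The $L^1_2$-part of the strong norm is then controlled by a second Cauchy--Schwarz application, using that $\rho_2$ is bounded on the compact set $D$, giving $\norm{\dot\cL f_0}_{L^1_2} \leq \norm{1_D\rho_2}_2\,\norm{1_D\dot\cL f_0}_2$. Combining, there is $C_1>0$ such that $\norm{\dot\cL f_0}_s \leq C_1\norm{\dot\kappa}_{L^2(D\times D)}$.

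To conclude, I would verify that $\dot\cL f_0 \in V_\fB$, which is needed in order to apply the bounded resolvent from Proposition~\ref{prop:bdd_res}. Since $R$ is defined only on integral-preserving perturbations, Lemma~\ref{lem:int_pr} gives $\int\dot\kappa(x,y)\,dy = 0$ for almost every $x$, and Fubini then yields $\int\dot\cL f_0(y)\,dy = \int f_0(x)\int\dot\kappa(x,y)\,dy\,dx = 0$. Combined with the strong-norm estimate above, the boundedness of $(Id-\cL_0)^{-1}:V_\fB\to V_\fB$ together with the obvious inequality $\norm{\cdot}_1\leq \norm{\cdot}_s$ gives in fact Lipschitz continuity of $R$. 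The proof is essentially routine once the formula of Theorem~\ref{thm:res_diff} is in hand; the main technical point is the bookkeeping with the mixed-norm structure of $\fB$, where the compactness of $D$ is used essentially both to dominate the $L^1_2$-component by the $L^2$-component via the bounded weight $\rho_2$, and to ensure that the Hilbert--Schmidt bound provides finite constants.
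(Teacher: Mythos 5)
Your proposal is correct and follows essentially the same route as the paper: write $R(\dot\kappa)=(Id-\cL_0)^{-1}\dot\cL f_0$ via Theorem~\ref{thm:res_diff}, bound $\norm{\dot\cL f_0}_s$ by $C\norm{\dot\kappa}_{L^2(D\times D)}$ using Cauchy--Schwarz and the compact support of $\dot\kappa$ (with the weight $\rho_2$ bounded on $D$ to control the $L^1_2$ part), and then invoke the bounded resolvent of Proposition~\ref{prop:bdd_res}. Your explicit check that $\dot\cL f_0\in V_\fB$ via Lemma~\ref{lem:int_pr} is a point the paper leaves implicit, but it does not change the argument.
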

\begin{proof}

        Using Theorem~\ref{thm:res_diff}, we have 
    \[
    R(\dot{\kappa})=\lim\limits_{\delta \rightarrow 0}\frac{f _{\dot{\kappa},\delta}-f _{0}}{\delta }=(Id-\cL_0)^{-1}\dot{\cL}f_0,
    \]
   which implies 
    \[
    \norm{R(\dot{\kappa})}_{1}\leq \norm{R(\dot{\kappa})}_{s}\leq \norm{(Id-\cL_0)^{-1}}_{s}\norm{\dot\cL f_0}_{s}.
    \]
     Thanks to Proposition \ref{prop:bdd_res}, the operator $(Id- \mathcal{L}_0)^{-1}$ is bounded, and thus $\norm{(Id- \mathcal{L}_0)^{-1}}_s$ is finite. Consider the following 
    \[
    \begin{split}
        \norm{\dot\cL f_0}_{s}
        &= \norm{\int\dot \kappa(x,y)f_0(x)dx}_{s}\\
        &= \norm{\int\dot \kappa(x,y)f_0(x)dx}_{L^1_2}+\norm{1_D\int\dot \kappa(x,y)f_0(x)dx}_{2}.
    \end{split}
    \] 
    Using Holder's inequality and the fact that \(\dot \kappa\) has compact support, we get
    \[
    \norm{\int\dot \kappa(x,y)f_0(x)dx}_{L^1_2}\leq \norm{1_D\rho_2}_2\norm{\int\dot \kappa(x,y)f_0(x)dx}_{2},
    \]
    where \(\rho_2\) is the same as given in \eqref{WF}.
    Finally, using \eqref{KF2} and again the fact that \(\dot \kappa\) has compact support, there exists \(C\geq 0\) such that 
    \[
    \begin{split}
        \norm{\dot\cL f_0}_{s}
        &\leq (\norm{1_D\rho_2}_2+1)\norm{\int\dot \kappa(x,y)f_0(x)dx}_{2}\\
        &\leq (\norm{1_D\rho_2}_2+1) \norm{1_Df_0}_{2}\norm{\dot \kappa}_{2}\\
        &\leq C \norm{1_Df_0}_{2}\\
        &\leq C \norm{f_0}_s.
    \end{split}
    \]
     The above calculation implies the operator \(R\) is bounded and hence continuous.
\end{proof}

 Under our assumptions, since $\phi\in L^\infty$, we easily get 
\begin{equation}\label{LRidea2}
\lim_{\delta \rightarrow 0}\frac{\int \ \phi(x)f_\delta (x) \ dx-\int \
\phi(x)f_{0}(x) \ dx}{ \delta }=\int \ \phi(x)R(\dot{\kappa})(x)\ dx.  
\end{equation}

Hence the rate of change of the expectation of $\phi$ with respect to $\delta$ is given by the linear response of the system under the given perturbation. To take advantage of the general results of Appendix~\ref{appendix}, we perform the optimisation of $\dot{\kappa}$ over the closed, bounded and convex subset \(P\) of the Hilbert space $\fH=L^2(D\times D)$ containing the zero perturbation. To maximise the RHS of \eqref{LRidea2} we set 
\[
\mathcal{J}(\dot \kappa):=-\int \phi(x)\cdot R(\dot{\kappa})(x)\ dx
\]
and consider the following problem:
\begin{problem}\label{prob:max}
Find the  solution $\dot{\kappa}_{opt}$ to 
   \begin{eqnarray}  
   \max \left \lbrace  \mathcal{J}(\dot{\kappa}) \ : \  \dot{\kappa} \in L^2 (D\times D), \ \norm{\dot{\kappa}}_2 \leq 1\right \rbrace .
\end{eqnarray} 
\end{problem}

To answer the above problem, we state a rather general following result:

\begin{proposition}\label{prop:gen_cnvx}
Let \(P\subset L^2\) be a closed, bounded and strictly convex set such that its relative interior contains the zero vector. If $\mathcal{J}$ is not uniformly vanishing on \(P\), then the problem 
$$
     \max \left \lbrace \mathcal{J}(\dot{\kappa})\ : \   \dot{\kappa} \in P\right \rbrace 
$$
has a unique solution.
\end{proposition}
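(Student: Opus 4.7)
The plan is to observe that, despite appearances, $\mathcal{J}$ is a continuous linear functional on the Hilbert space $\fH := L^2(D\times D)$, so that the problem reduces to maximising a continuous linear functional over a closed, bounded, strictly convex set. Existence and uniqueness then both follow from classical convex analysis, along the lines of the results collected in Appendix~\ref{appendix}.

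First I would establish linearity and continuity of $\mathcal{J}$. By Theorem~\ref{thm:res_diff} we have $R(\dot{\kappa}) = (Id - \cL_0)^{-1}\dot{\cL} f_0$. The assignment $\dot{\kappa} \mapsto \dot{\cL}$ is linear by the defining formula~\eqref{eq:diff_to}, while $(Id - \cL_0)^{-1}$ is a fixed bounded operator on $V_{\fB}$ by Proposition~\ref{prop:bdd_res}. Hence $R$ is linear in $\dot{\kappa}$, and since $\phi \in L^\infty$ the functional $\mathcal{J}(\dot{\kappa}) = -\int \phi \cdot R(\dot{\kappa})\, dx$ is linear. Continuity of $\mathcal{J}$ follows from $|\mathcal{J}(\dot{\kappa})| \leq \|\phi\|_\infty \|R(\dot{\kappa})\|_1$ combined with the continuity of $R$ from Lemma~\ref{lem:R_cty}.

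For existence, I would invoke reflexivity of $\fH$: by Banach--Alaoglu the closed bounded convex set $P$ is weakly compact, and a continuous linear functional on a Hilbert space is weakly continuous, so $\mathcal{J}$ attains its supremum on $P$.

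For uniqueness, I would argue by contradiction. Suppose $\dot{\kappa}_1 \neq \dot{\kappa}_2$ in $P$ are both maximizers. By linearity of $\mathcal{J}$, every convex combination $t\dot{\kappa}_1 + (1-t)\dot{\kappa}_2$ with $t \in (0,1)$ is also a maximizer. But strict convexity of $P$ places each such combination in the relative interior of $P$. Since $0 \in \operatorname{relint}(P)$ by hypothesis, the affine hull of $P$ is a linear subspace $V \subseteq \fH$, and the non-vanishing hypothesis on $\mathcal{J}$ guarantees that $\mathcal{J}|_V$ is a non-trivial linear functional on $V$. Such a functional cannot attain its maximum at a relative interior point of $P$ (an open subset of $V$), since moving in any direction within a small relative neighbourhood would otherwise force the functional to be constant on $V$, hence identically zero. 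This contradiction yields uniqueness. I expect this last step --- the impossibility of a non-trivial linear functional attaining its maximum at a relative interior point of a convex set --- to be the main technical checkpoint, but it is a standard fact from convex analysis and is covered by the tools of Appendix~\ref{appendix}.
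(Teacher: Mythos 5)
Your proposal is correct and follows essentially the same route as the paper: establish that $\mathcal{J}$ is a continuous linear functional on $L^2(D\times D)$ via the linearity and boundedness of $R$ (Lemma~\ref{lem:R_cty}) together with $\phi\in L^\infty$, and then apply the standard convex-optimisation facts of Appendix~\ref{appendix} (Propositions~\ref{prop:exist} and~\ref{prop:uniqe}). The only difference is that you spell out the proofs of those appendix results (weak compactness for existence, the relative-interior argument for uniqueness) rather than citing them, and your sketches of both are sound.
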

\begin{proof}
     The linearity of \(R\) from Lemma~\ref{lem:R_cty} implies that the function \(\dot \kappa\mapsto \int \phi(x)R(\dot \kappa)(x)dx\) is continuous. Since \(P\) satisfies the hypothesis, we can apply Proposition~\ref{prop:exist} and Proposition~\ref{prop:uniqe} to get the result.
\end{proof}

Finally, to address Problem~\ref{prob:max}, note that the perturbations \(\dot \kappa\) lie in the closed unit ball of \(L^2(D\times D)\), which is a closed, bounded and a strictly convex set having the zero vector in its interior. Thus, we get the unique kernel perturbation \(\dot{\kappa}_{opt}\) using Proposition~\ref{prop:gen_cnvx}. 

\subsection{Numerical scheme for the approximation of the optimal perturbation}
\label{sec: numerical scheme for the approximation of the opt. pertb.}
In this section, we present a simple numerical recipe to obtain a Fourier approximation of the optimal perturbation $\dot{\kappa}_{opt}$ in Problem \ref{prob:max}. Similar methods, based on  Fourier approximation on suitable Hilbert spaces  were used in \cite{FG23}, and \cite{GN25}.
 
 By Lemma \ref{lem:R_cty}, and the fact that $\phi\in L^\infty$, we obtained in the proof of Proposition \ref{prop:gen_cnvx} that $\mathcal J:L^2(D\times D)\to \mathbb{R}$ is continuous. 
 Thus, $L^2(D\times D)$ being a Hilbert space, by  using Riesz's representation theorem, there exists \(g\in L^2\) such that 
\[
\mathcal{J}(\dot{\kappa})= \langle g, \dot{\kappa} \rangle,
\]
which implies that 
\[
\dot{\kappa}_{opt}=\frac{g}{\norm{g}}_{2}.
\]

Now we state how to approximate \(g\) by computing its Fourier coefficients. Consider an orthonormal basis
\(\{h_i\}_{i\in \bN}\) of\footnote{One can consider the basis consisting of indicator functions, or that of Hermite polynomials etc.} \(L^2(D\times D)\) and define
\[
G_i = \langle g, h_i \rangle = \mathcal{J}(h_i) = \int \phi~dR(h_i).
\]
 We have then that the sequence \(\Sigma_{i=0}^n G_ih_i\) converges to \(g\) as \(n\to\infty\).\\

\section{Numerical experiments}\label{sec:numerics}
In this section, we present numerical experiments related to Problem~\ref{prob:max}.

We find the optimal infinitesimal perturbation that maximises the expected value of an observable, as discussed in Section \ref{sec:opt_exp}. 
We will consider a suitable SDE on the real line and on this simple example we  search for the optimal perturbation in order to maximize the increase of a certain observable.

Specifically, in Section \ref{sec: sde and fpe} we present and describe the details of the gradient SDE that will express the dynamics of each of our experiments. Section \ref{sec: fd for fpe} is aimed at describing a classical finite difference method used to solve the partial derivative equation of the parabolic type that identifies the solution law of the SDE. Section \ref{sec: numerical scheme optimal problem} presents how to numerically find the solution to the problem presented in Section \ref{sec: numerical scheme for the approximation of the opt. pertb.}. Finally, Section \ref{sec: results} presents and explains the results of our experiments. They consist of taking observables as probability density functions of Gaussian variables: symmetric with respect to the domain in the first case, and asymmetric in the second. 

\subsection{Gradient system and Fokker-Planck equation}
\label{sec: sde and fpe}
We consider, for a noise intensity $\varepsilon>0$ and a final time $T>0$, the SDE given by
\begin{equation}
\begin{split}
    dY_t^x &= - V'(Y_t^x)dt + \varepsilon dW_t, \quad t \in (0,T)\\
    Y_0^x &= x,
\end{split}
\label{eq: SDE gradient type}
\end{equation}
where $V(y) = \frac{y^4}{4}- \frac{y^2}{2}$ is a symmetric double well potential. $(W_t)_t$ denotes a Brownian Motion (BM) and $x \in \mathbb{R}$ is a deterministic initial condition. The final time $T>0$ will be the time in which we investigate the optimal response in the following sections. {In other words, we will numerically investigate the properties of the the transfer operator associated with the evolution of the SDE from time $t=0$ to time $t = T$.}

This SDE, which belongs to the class of gradient systems, is often used to describe bistability in many applications, such as phase separation in physics or abrupt climate shifts in paleoclimate studies. If $\varepsilon = 0$, the dynamical system given by \eqref{eq: SDE gradient type} presents three fixed points: $\pm 1$ and $0$. The former are asymptotically stable and correspond to the minimum points of $V$; the latter is unstable, since it corresponds to a local maximum point of $V$.

Considering the case $\varepsilon>0$, the existence and uniqueness of a solution is classical. Indeed, since the drift term $y \mapsto y - y^3$ is locally Lipschitz, the SDE \eqref{eq: SDE gradient type} admits, locally in time, an almost everywhere (a.e.) continuous strong solution, see \cite{Baldi2017} for more details. Further, since the potential $V$ is coercive, finite time explosion of the solution can be ruled out (\cite[Theorem 3.5]{Mao1997}), resulting in the existence and uniqueness of a strong solution $(Y_t^x)_{0 \leq t \leq T}$, for any $T>0$. Furthermore, since the potential $V$ is regular and coercive, Hormander's Theorem assures that the law $\mathcal{L}_{Y_t^x}$ of $Y_t^x$ has a $C^\infty$ density $y \mapsto p^x(y,t)$ with respect to the Lebesgue measure on $\mathbb{R}$, see \cite{Hormander1967} and \cite[Section 7]{Hairer2010}. It satisfies, in the weak sense, the Fokker-Planck equation (FPE) (\cite{Risken1984,Otto1998, Gardiner2009})
\begin{equation}
\left \lbrace
\begin{aligned}
     \partial_t p^x  -\frac{\varepsilon^2}{2} \Delta_y p^x + \partial_y \left( pb\right) &= 0, &x\quad &(y,t) \in  \mathbb{R} \times (0,T), \\
    p(0,y) & = \delta_x (y), &\quad & y \in \mathbb{R}, 
\end{aligned}
\right. 
\label{eq: fpe}
\end{equation}
where $b(y) = - V'(y) = y-y^3$ is the drift term  of the SDE \eqref{eq: SDE gradient type} and $\delta_x$ denotes the Dirac delta concentrated in $x$. Note that, given the final time $T>0$, which is the same time at which we investigate the linear response, the kernel $\kappa(x,y)$ defining the transfer operator $\mathcal{L}_0$ is given by
$$
\kappa(x,y) := p^x(y,T).
$$
A numerical approximation of the kernel is represented in Figure \ref{subfig: kernel}, obtained for $T = 1$ and $ \varepsilon = 0.25$. Given an initial condition $x$, the probability density function (PDF) $p$ is concentrated around either $1$ or $-1$, depending on the sign of the initial condition. This bistability is clear also in Figure \ref{subfig: invarian density f_0}, which depicts the invariant density $f_0$ for $\mathcal{L}_0$. In fact, it is symmetric, with two maximum points in $\pm 1$. In the next section, we are going to describe our numerical approximation for the solution of the FPE and how to use it to study the linear response problem.

\subsection{Finite difference method for the Fokker-Planck equation}
\label{sec: fd for fpe}
We apply an implicit finite-difference (FD) method (\cite{Quarteroni, Thomas1995}) to approximate the solution of the parabolic problem \eqref{eq: fpe} in the finite domain $\Omega \times (0,T) = (-a,a) \times (0,T)$, with the constraint to choose a sufficiently large $a>0$ such that $p$, which describes a PDF, is negligible on $\Omega ^c$. The FPE \eqref{eq: fpe} is a second-order parabolic PDE, and to consider a solution, we need to complete it with boundary conditions at the end of the domain $\Omega$. We choose reflecting boundary conditions, which means that no probability mass can escape the domain $\Omega = (-a,a)$ (\cite{Gardiner2009}). This leads to 
\begin{equation}
\left \lbrace
\begin{aligned}
     \partial_t p^x - \frac{\varepsilon^2}{2} \Delta_y p^x +\partial_y \left( pb\right)&= 0, &\quad &(y,t) \in  \Omega \times (0,T), \\
 \frac{\varepsilon^2}{2} \partial_y p(-a,t) -p(-a,t) b(-y)&=0, &\quad &t>0, \\
\frac{\varepsilon^2}{2} \partial_y p(a,t) - p(a,t) b(y)  & =0,  &\quad & t>0,\\
    p(0,y) & = \delta_x (y), & \quad & y \in \Omega .
\end{aligned}
\right.
\label{eq: FPE with reflecting boundary conditions}
\end{equation}

\begin{figure}
\begin{subfigure}[h]{0.49\linewidth}
\includegraphics[width=\linewidth]{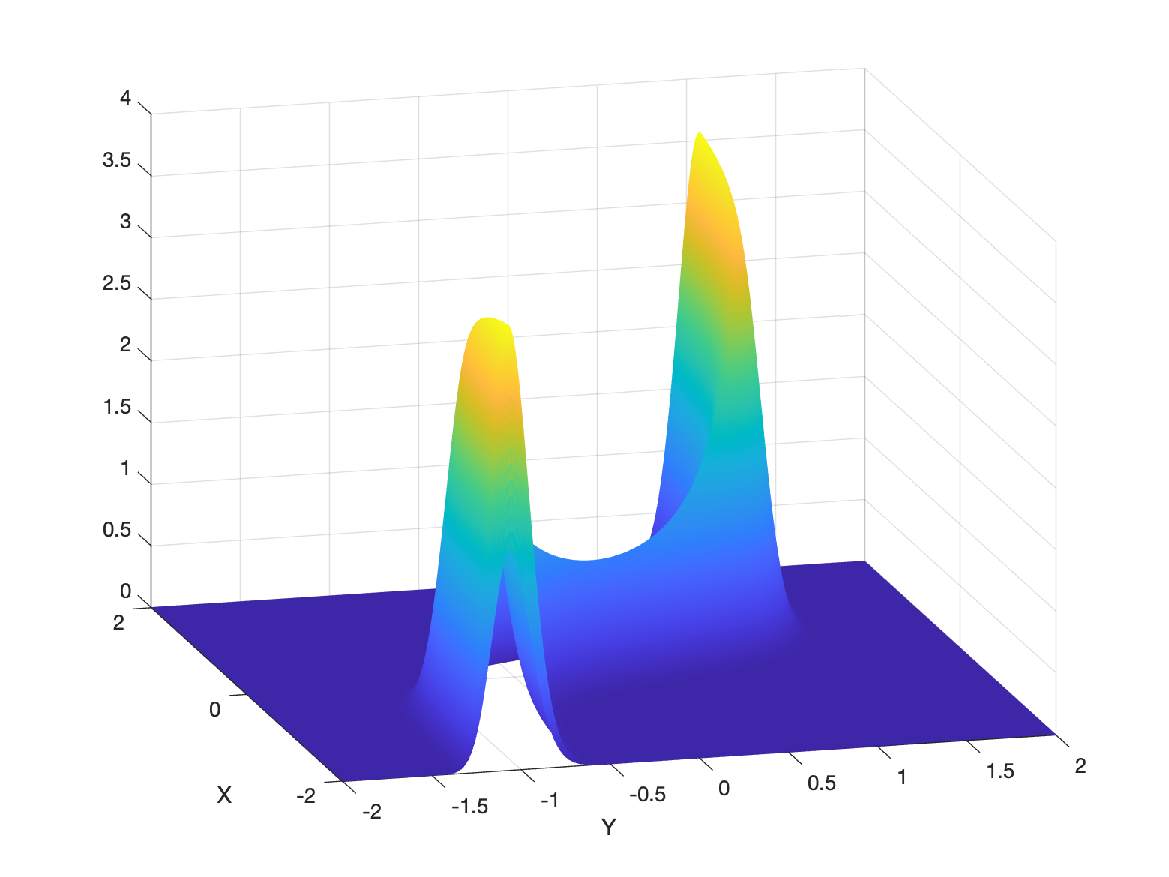}
\caption{Kernel $(x,y) \mapsto \kappa(x,y)$}
\label{subfig: kernel}
\end{subfigure}
\hfill
\begin{subfigure}[h]{0.49\linewidth}
\includegraphics[width=\linewidth]{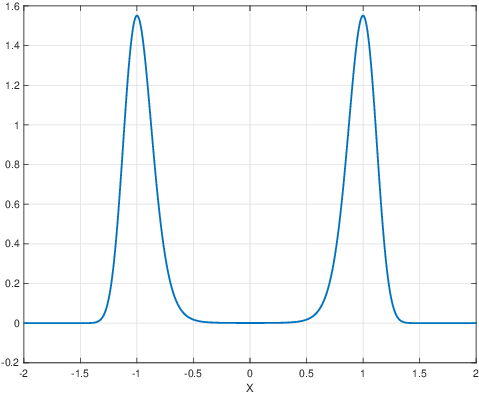}
\caption{Invariant density for $\mathcal{L}_0$}
\label{subfig: invarian density f_0}
\end{subfigure}%
\caption{Numerical approximation for (a) kernel $\kappa(x,y) = p^x(y,T)$, where $p$ is a solution of the FPE \eqref{eq: FPE with reflecting boundary conditions}, and (b) invariant density for the transfer operator $\mathcal{L}_0$. The final time is $T =1,$ the noise intensity is $\varepsilon = 0.25$, the domain is $\Omega = (-2,2)$, while the mesh sizes are $\Delta x = \Delta x = 2\cdot 10^{-3}$.  }
\end{figure}

First, we consider a uniform mesh of the space and time domain, i.e., we fix
$$
y_i := -a + i \Delta y, \quad i =0,...,m,\quad  \Delta y = \frac{2a}{n},
$$
and
$$
t_j := j \Delta t, \quad j = 0,...,m, \quad  \Delta t = \frac{T}{m},
$$
where $n,m >0$ determine the number of points in the meshes. We denote by
$$
p_{ij} := u(y_i, t_j), \quad i = 0,...,n, \; j = 0,...,m,
$$
the numerical approximation for the solution of \eqref{eq: FPE with reflecting boundary conditions}. Similarly, we set
$$
b_i = b(x_i), \quad i = 0,...,n.
$$

Second, we use the centered formula to approximate the second derivative, i.e.
$$
\Delta_y p_{\mid (y,t) = (y_i, t_j)} \approx \frac{p_{i-1,j+1}- 2p_{i,j+1}+p_{i+1,j+1}}{\Delta x^2},
$$
and a centered difference for the first derivative
$$
\partial_y(pb)_{\mid (y,t) = (y_i, t_j)} \approx \frac{p_{i+1,j+1}b_{i+1}-p_{i-1,j+1}b_{i-1}}{2\Delta x}.
$$
Note that in the previous equations, when we are at the boundary of $\Omega$, there appear the terms $p_{-1,j}, p_{n+1,j}, b_{-1},$ and $ b_{n+1}  $. The former are just auxiliary unknowns, since the solution $p$ of the FPE is not defined outside $\Omega \times (0,T)$; the latter are defined as
$$
b_{-1} = b(y_{-1}), \quad b_{n+1} = b(y_{n+1}).
$$
In addition to this, to approximate the Dirac Delta we consider the same mesh for the initial condition domain, i.e., we set
$$
x_l = y_l, \quad l = 0,...,n.
$$
Furthermore, for each $l =0,...,n$ we approximate the Dirac Delta $\delta_{x_i}$ with the PDF $f_{\mu_l, \sigma^2}$ of a Gaussian random variable with mean $\mu_l = x_l$ and standard deviation $\sigma = 10^{2} \cdot \Delta x$.

In conclusion, for each $l >0$, the implicit FD that we apply can be written as
\begin{align}
\begin{cases}
        \frac{p_{i,j+1}-p_{i,j}}{\Delta t} &= \frac{\varepsilon}{2\Delta x^2} \left(p_{i-1,j+1}- 2p_{i,j+1}+ p_{i+1,j+1} \right)\\
        &-\frac{p_{i+1,j+1}b_{i+1} - p_{i-1,j+1}b_{i-1}}{2\Delta x}, \;i =  0,...,n,\; j = 1,..., m, \\
         0 & =  
        \frac{\varepsilon^2}{2} \left(\frac{p_{1,j}- p_{-1,j}}{2\Delta x} \right) - p_0b_0, \quad j =1,...,m  \\
        0&= 
        \frac{\varepsilon^2}{2} \left(\frac{p_{n+1,j}- p_{n-1,j}}{2\Delta x} \right) - p_nb_n , \quad j =1,...,m\\
        p_{i,0} &= f_{\mu_l, \sigma }(y_i), \quad i =0,...,n.
            \label{eq: implicit FD FPE}
\end{cases}
    \end{align}

\subsection{Numerical scheme for the optimal response problem}
\label{sec: numerical scheme optimal problem}
In this section, we describe how to numerically approximate the solution of
\begin{equation}
\min \left \lbrace \mathcal{J}(\Dot{\kappa}) \; \mid \; \dot \kappa \in L^2 (D),\;  \norm{{\dot \kappa}}_{L^2(D)}  \leq 1\right \rbrace,
\label{eq: optimal problem}
\end{equation}
where $D = [-d,d] \subsetneq \Omega$. We know that, given an observable $\phi \in L^\infty$, the optimal perturbation is given by
$$
g = \frac{\sum_{r=0}^{+\infty} G_r h_r}{\left(\sum_{r = 0}^{+\infty} G_r^2 \right)^{1/2}},
$$

where $(h_r)_r$ denotes an orthonormal basis of $L^2(D\times D)$ and
$$
G_r = \int_D \phi(y) (Id- \mathcal{L}_0)^{-1} \int_D h_r(x,y) f_0(x) dx dy.
$$
Further, $f_0$ denotes the invariant density for $\mathcal{L}_0$. In particular, we consider as basis of $L^2(D \times D)$ the sine-cosine wavelets $\mathcal{B}$ defined as
\begin{equation*}
    \begin{split}
        \mathcal{B} = \bigcup_{i\geq 1, j \geq 0} & \left \lbrace 
\frac{1}{\sqrt{2d}} \cos( i \frac{\pi }{d}x) \cdot \cos (j \frac{\pi}{d}y), \frac{1}{\sqrt{2d}} \cos( i \frac{\pi }{d}x) \cdot \sin (j \frac{\pi}{d}y), \right. \\
& \left. \frac{1}{\sqrt{2d}} \sin( i \frac{\pi }{d}x) \cdot \cos (j \frac{\pi}{d}y), \frac{1}{\sqrt{2d}} \sin( i \frac{\pi }{d}x) \cdot \sin (j \frac{\pi}{d}y)
\right \rbrace .
\end{split}
\end{equation*}
Note that we avoid inserting the wavelets with index $i=0$ in the basis $\mathcal{B}$ defining the vector space of the allowed perturbation, since we impose that any perturbation $\dot \kappa$ needs to satisfy $\int_D \dot \kappa(x,y) dx = 0$. Then, for each $r$, we approximate the coefficient $G_r$ as follows.

Let $I_1 = \left \lbrace x_i \mid x_i \in D \right \rbrace = \left \lbrace x_i'
 \right \rbrace_{i=0,...,n_1}$ and $ \vert I_1 \vert = n_1 +1$. We adopt the same notation for $I_2 = \left \lbrace y_j \mid y_j \in D \right \rbrace = \left \lbrace y_j'
 \right \rbrace_{j=0,...,n_1}$. First, the definite integral defining $G_r$ is approximated by using the composite Simpson's $1/3$ rule (\cite{Quarteroni2007}), which leads to
\begin{equation}
    \begin{split}
        \int_{-d}^d \eta_1(x) dx &\approx  S_{1/3}(\Vec{\eta}_1) =: \frac{\Delta x}{3} \left( \eta_1(y_0') + 4 \eta_1(y_1') + 2\eta_1(y_2') + 4 \eta_1(y_3') +2\eta_1(y_4')+\cdots \right. \\
        &\left.
        +2 \eta_1(y_{n-2}') + 4 \eta_1(y_{n-1}') + \eta_1(y_{n_{1}}') \right),
    \end{split}
\end{equation}
with 
$$
\eta_1(y_j ') =\phi(y_j') \cdot (Id- \mathcal{L}_0)^{-1} \int_D h_r(x,y_j')f_0(x) dx :=\phi(y_j') \cdot \eta_2(y_j').
$$

Second, the vector $\Vec{\eta}_2 = (\eta_2(y_j'))_j \in \mathbb{R}^{n_1 +1}$ is the solution of the linear system
$$
\left(Id_{n_1 +1} - L_0^T \right) \Vec{\eta}_2 = \Vec{d}.
$$
The matrix $L_0 = (l_{ij})_{ij} \in \mathbb{R}^{(n_1+1)\times (n_1+1)}$ represents the numerical approximation, via the same Simpson rule recalled before, of the operator $\mathcal{L}_0$. Its elements are given by
$$
l_{ij} = \begin{cases}
    \frac{\Delta x}{3} \kappa (x_i', y_j') \quad &\text{ if }i=0, j \geq 0, \\
    4\frac{\Delta x}{3} \kappa (x_i', y_j') \quad &\text{ if }i \text{ is odd}, j \geq 0, \\
    2\frac{\Delta x}{3} \kappa (x_i', y_j') \quad &\text{ if }i \text{ is even}, j \geq 0.
\end{cases}
$$
Further, the constant term vector $\Vec{d} =(d_j)_j \in \mathbb{R}^{n_1 +1}$ is obtained by approximating the innermost integral of $G_r$ as follows
$$
d_j = \int_D h_r(x, y_j) f_0(x) dx \approx  S_{1/3}(\Vec{\eta}_3),
$$
with $\Vec{\eta}_3^{\,(j)} = (\eta_3^{\,(j)}(x_i))_i \in \mathbb{R}^{n_1 +1}$ and $\eta_3^{\,(j)}(x_i) = h_r(x_i', y_j') \cdot f_0(x_i').$

Third, the invariant density $f_0$ is obtained by computing the left eigenvector $\Vec{f}_0 = (f_{0,i})_{i = 0,...,n}$ corresponding to the eigenvalue $1$ of the matrix
$$
K \cdot dx,
$$
where the entries of the matrix $K = (\kappa_{ij})_{ij} \in \mathbb{R}^{(n+1) \times (n+1)}$ are given by
$$
\kappa_{ij} = \kappa(x_i, y_j).
$$
Indeed, this is equivalent to find a vector $\Vec{f}_0 \in \mathbb{R}^{n+1}$ such that
$$
\Vec{f}_0 \cdot K \cdot dx = \Vec{f}.
$$
Note that the left-hand side is an approximation of the transfer operator $\mathcal{L}_0$ evaluated on $f_0$. In fact, the previous equation, component-wise, reads as
$$
dx \cdot \sum_{i} \kappa(x_i', y_j') f_{0,i} = f_j, \quad \forall j = 0,..., n, $$ with the left side being an approximation of $\mathcal{L}_0f$. The existence of a such eigenvector for the eigenvalue $1$ is a consequence of the Perron-Frobenius Theorem for row stochastic matrices, see \cite{Bini2005}.

Lastly, the sum defining $g$ is truncated, and we consider the approximation of $g$ given by the elements in the truncated basis $B_{I,J}$ defined as 
\begin{equation*}
    \begin{split}
        \mathcal{B}_{I,J} = \bigcup_{i= 1,...,I} \bigcup_{j=0,...,J} & \left \lbrace 
\frac{1}{\sqrt{2d}} \cos( i \frac{\pi }{d}x) \cdot \cos (j \frac{\pi}{d}y), \frac{1}{\sqrt{2d}} \cos( i \frac{\pi }{d}x) \cdot \sin (j \frac{\pi}{d}y), \right. \\
& \left. \frac{1}{\sqrt{2d}} \sin( i \frac{\pi }{d}x) \cdot \cos (j \frac{\pi}{d}y), \frac{1}{\sqrt{2d}} \sin( i \frac{\pi }{d}x) \cdot \sin (j \frac{\pi}{d}y)
\right \rbrace .
\end{split}
\end{equation*}
Thus, our approximation of the solution $g$ is given by
$$
g_{I,J} = \frac{\sum_{r \, : h_r \in \mathcal{B}_{I,J}} G_r h_r}{\left(\sum_{r \, : h_r \in \mathcal{B}_{I,J}} G_r^2 \right)^{1/2}}   .
$$

\subsection{Results}
\label{sec: results}
In this section, we present our numerical experiments to approximate the solution of the optimal control problem \eqref{eq: optimal problem} using two different observables, \( \phi \). Both observables are selected from the class of PDFs \( f_{\mu, \sigma^2} \) of Gaussian random variables with mean \( \mu \) and variance \( \sigma^2 \).

In the first experiment, we use a symmetric observable with \( \mu = 0 \) and \( \sigma = 0.1 \). In the second experiment, we use an asymmetric observable with \( \mu = -0.5 \) and \( \sigma = 0.1 \). Similar results can be achieved by varying \( \mu \), \( \sigma \), and the class of the observable (such as continuous bump functions or polynomials). However, using non-continuous observables (like indicator functions) may introduce minor numerical errors due to the finite selection of basis elements. We will discuss the results of the symmetric experiment, shown in Figure \ref{fig: symmetric experiment}, in detail. The results for the asymmetric case, shown in Figure \ref{fig: asymmetric experiment}, follow the same explanation and are not repeated here.

First, Figure \ref{fig: observable symmetric} depicts the symmetric observable used in the first experiment. Figure \ref{fig: optimal perturbation symmetric} represents the approximation $g_{I,J}$ of the optimal perturbation $g$ that solves the problem \eqref{eq: optimal problem}. This optimal perturbation, \( g_{I,J}\), provides the infinitesimal adjustment to the kernel needed to maximize \( \int_D \phi(y) f_{\delta}(y) \, dy \). It is worth pointing out that the perturbation \( g_{I,J} \) is significant (i.e., far from zero) only where the observable is significant. Furthermore, for any given value \( y \) where \( g_{I,J} \) is significant, for example, \( y = 0 \), the restriction of the perturbation \( x \mapsto g_{I,J}(x,y) \) redistributes mass from areas where the invariant density \( f_0 \) is small (away from \( x = \pm 1 \)) to areas where the invariant density is large (around \( x = \pm 1 \)). Additionally, due to the choice of basis $\mathcal{B}_{I,J}$, it can be numerically verified that \( \int_D g_{I,J}(x,y) \, dx = 0 \) for any \( y > 0 \).

Figure \ref{fig: perturbed kernel symmetric} visually represents the perturbed kernel \( \kappa_\delta = \kappa + \delta \cdot g_{I,J} + r_\delta \) with \( \delta = \frac{1}{2} \) and \( r_\delta = 0 \).

Lastly, Figure \ref{fig: invarian densities symmetric} compares the invariant densities \( f_0 \) and \( f_{1/2} \) for the transfer operators \( \mathcal{L}_0 \) and \( \mathcal{L}_{1/2} \), respectively. These two symmetric densities, both normalized to satisfy \( \norm{f_0}_{L^1(D)} = \norm{f_{1/2}}_{L^1(D)} \), weight the points in \( D \) differently. Specifically, \( f_0 \) is concentrated around \( \pm 1 \), whereas \( f_{1/2} \), while preserving the same maximum points as \( f_0 \), also shows a third local maximum at \( x = 0 \), where the observable is concentrated.

\begin{figure}[!htb]
 \centering
 \subfloat[Observable.]{%
 \includegraphics[width=0.43\textwidth]{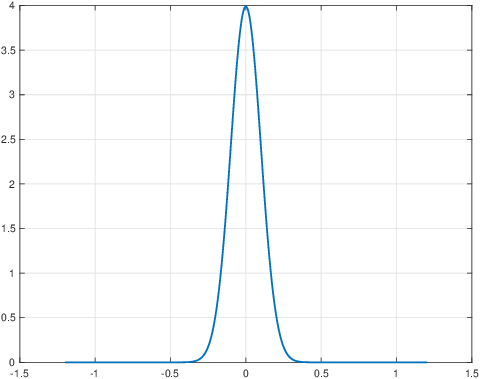}
       \label{fig: observable symmetric}
 }\quad
 \subfloat[Optimal perturbation $g_{I,J}$.]{%
 \includegraphics[width=0.5\textwidth]{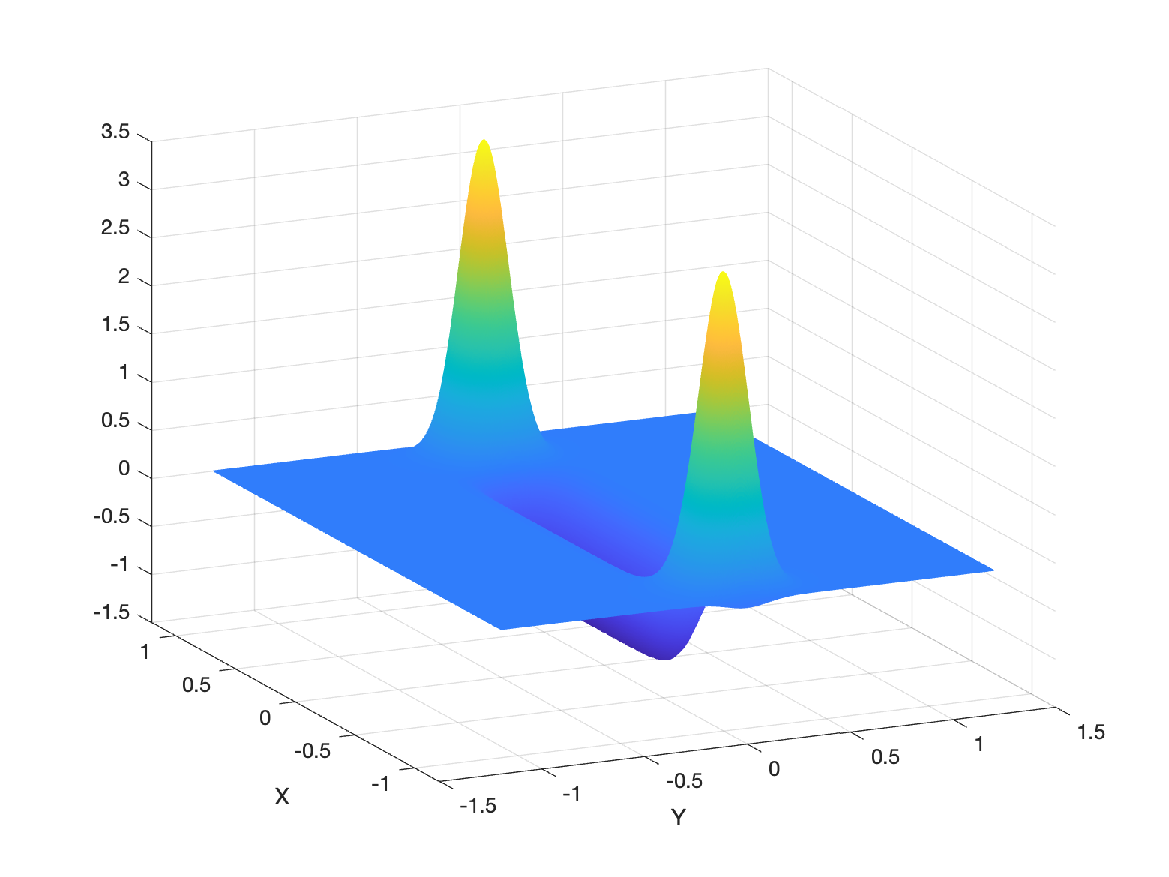}
            \label{fig: optimal perturbation symmetric}}
 \qquad
 \subfloat[Perturbed kernel $\kappa_{1/2} = \kappa + \frac{1}{2} g_{I,J}$.]{%
      \includegraphics[width=0.49\textwidth]{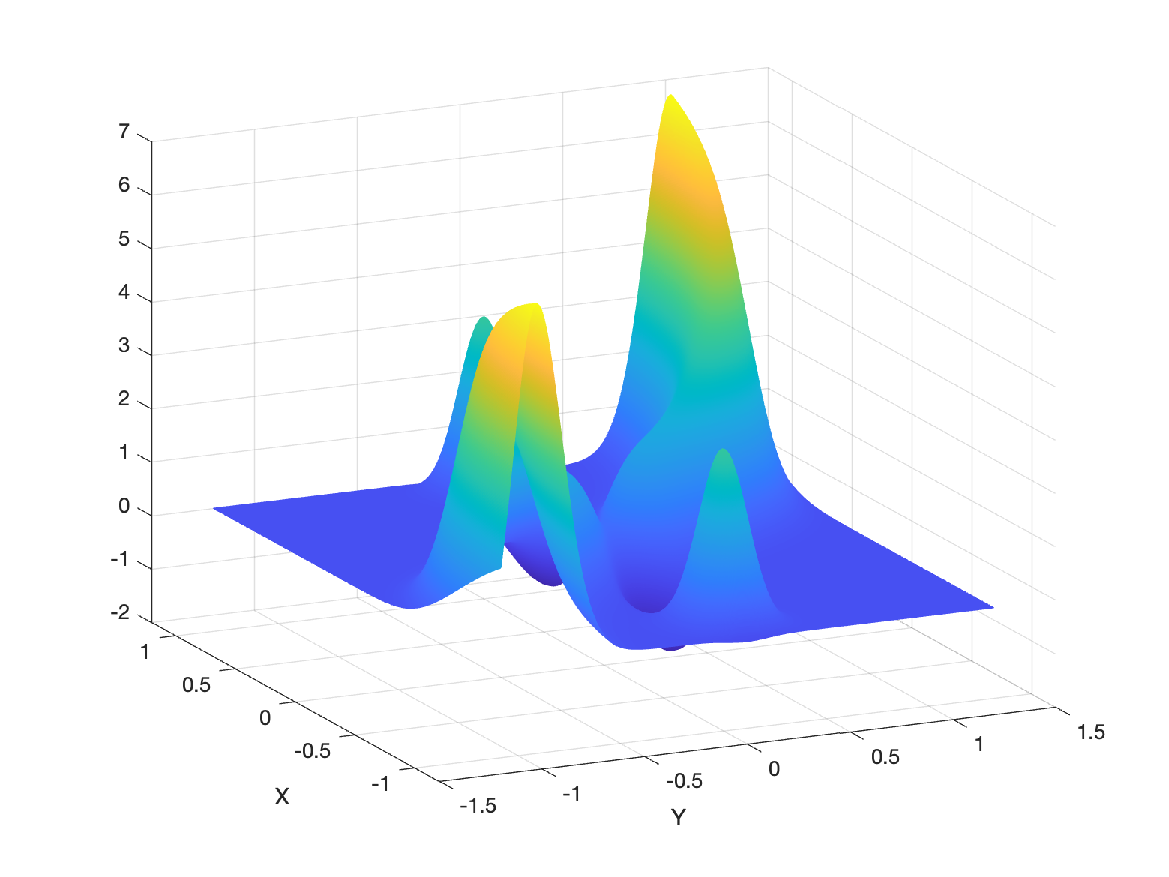}
            \label{fig: perturbed kernel symmetric}
      }
       \subfloat[Invariant densities $f_0$ and $f_{1/2}$.]{%
      \includegraphics[width=0.47\textwidth]{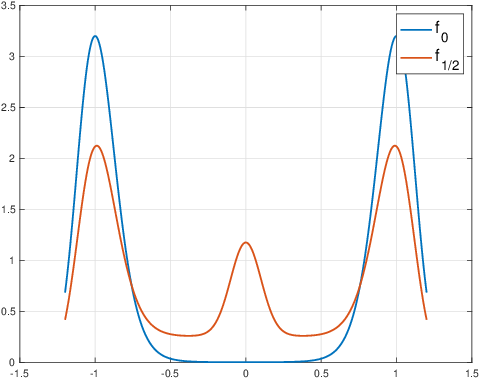}
            \label{fig: invarian densities symmetric}
      }
       \caption{Symmetric experiment, $\Delta x = \Delta t =  2 \cdot 10^{-3},$ $T = 1$, $D = [-1.2,1.2]$, $I = 35$, $J = 35$. (a) Symmetric observable $y \mapsto \phi(y)$. (b) Optimal perturbation $g_{I,J} = \frac{\sum_{r \, : h_r \in \mathcal{B}_{I,J}} G_r h_r}{   \left( \sum_{r \, : h_r \in \mathcal{B}_{I,J}} G_r^2 \right)^{1/2}} $. (c) Perturbed kernel $k_{1/2} = \kappa +\frac{1}{2} g_{I,J}$. (d) Invariant densities $f_0$ and $f_{1/2}$, for $\mathcal{L}_0$ and $\mathcal{L}_{1/2}$ respectively. }%
        \label{fig: symmetric experiment}
\end{figure}

\begin{figure}[!htb]
 \centering
 \subfloat[Observable.]{%
      \includegraphics[width=0.43\textwidth]{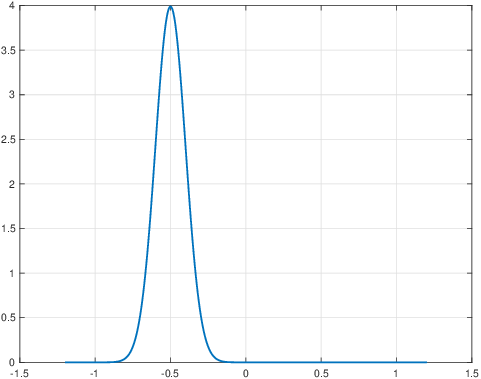}}
 \qquad
 \subfloat[Optimal perturbation $g_{I,J}$.]{%
      \includegraphics[width=0.49\textwidth]{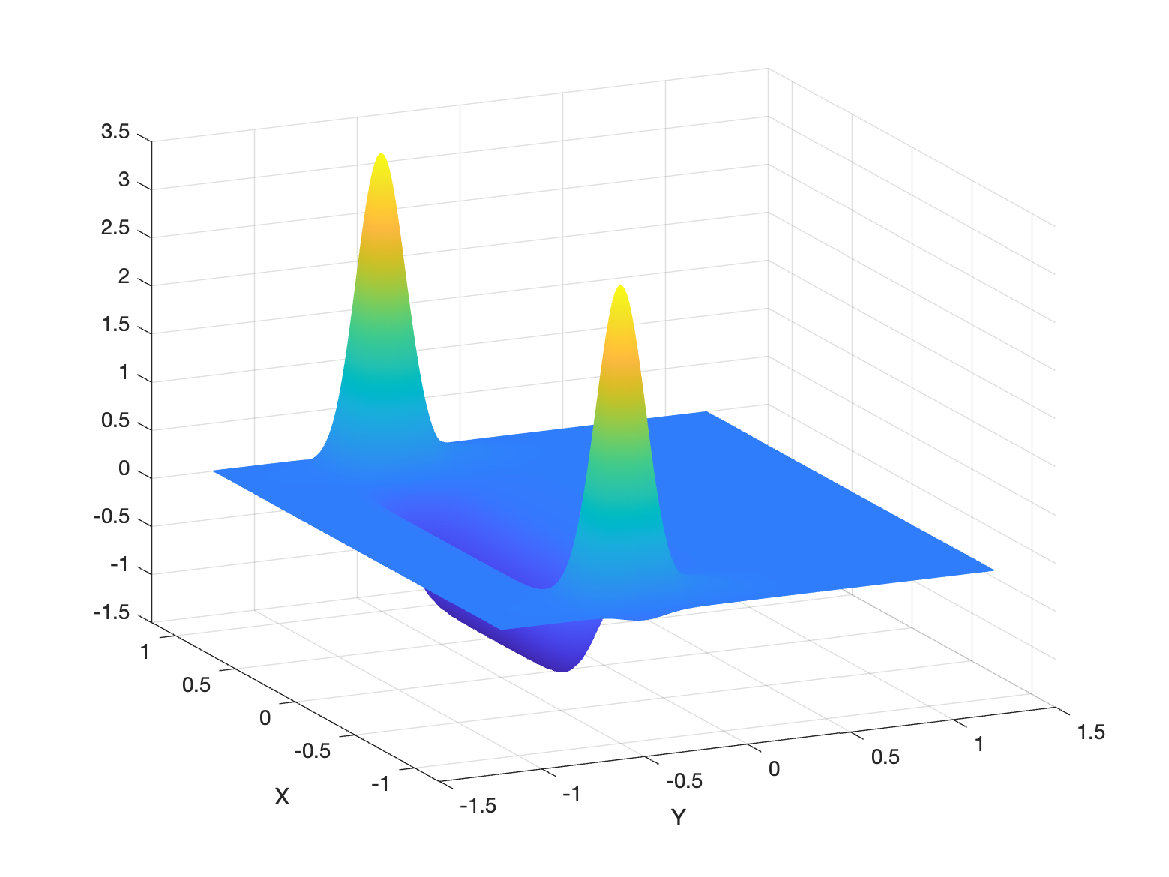}}
 \qquad
 \subfloat[Perturbed kernel $\kappa_{1/2} = \kappa + \frac{1}{2} g_{I,J}$.]{%
      \includegraphics[width=0.49\textwidth]{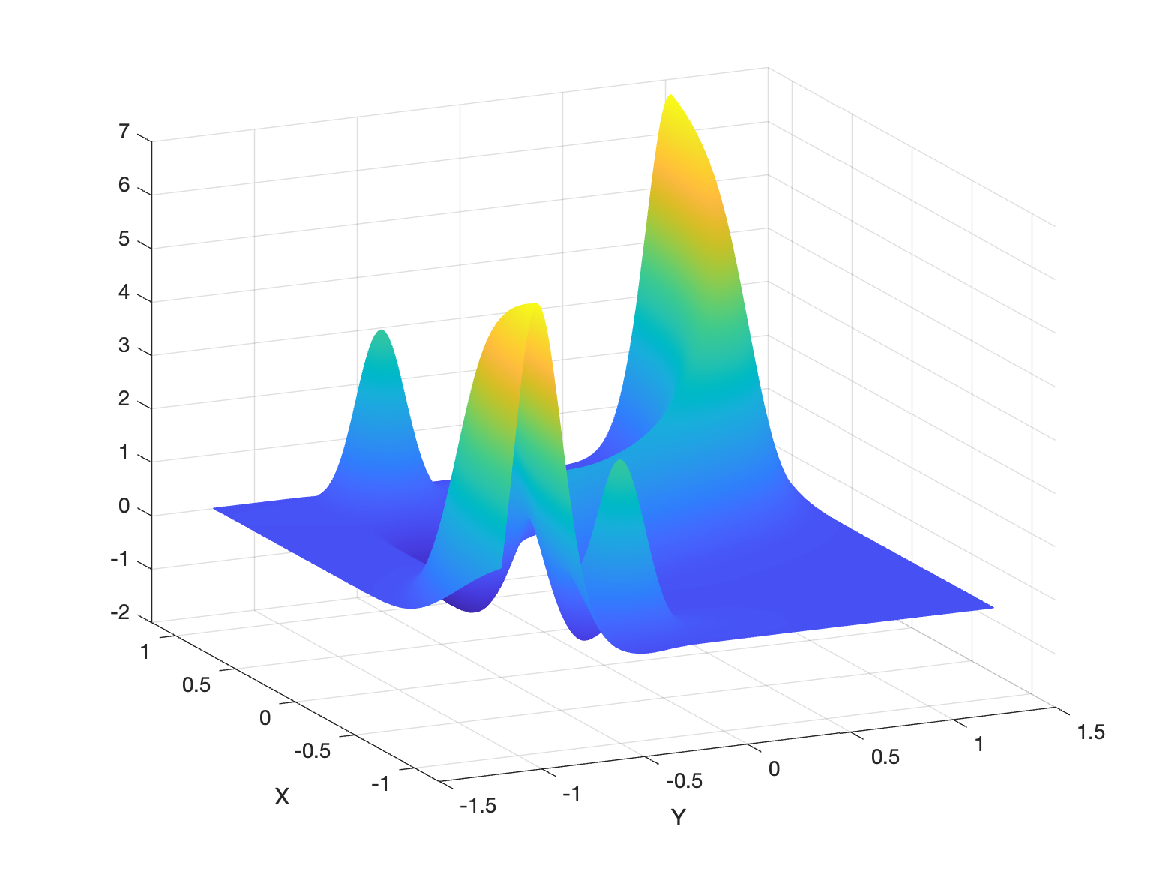}}
       \subfloat[Invariant densities $f_0$ and $f_{1/2}$.]{%
      \includegraphics[width=0.47\textwidth]{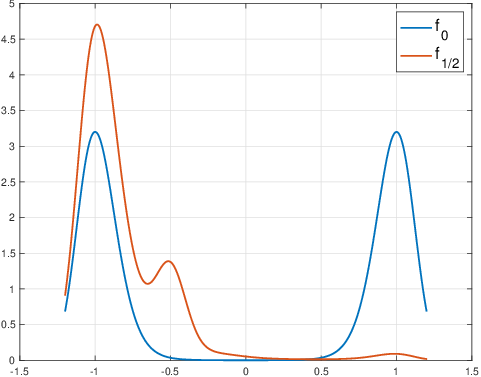}}
       \caption{Asymmetric experiment, $\Delta x = \Delta t =  2 \cdot 10^{-3},$ $T = 1$, $D = [-1.2,1.2]$, $I = 35$, $J = 35$. (a) Asymmetric observable $y \mapsto \phi(y)$. (b) Optimal perturbation $g_{I,J} = \frac{\sum_{r \, : h_r \in \mathcal{B}_{I,J}} G_r h_r}{   \left( \sum_{r \, : h_r \in \mathcal{B}_{I,J}} G_r^2 \right)^{1/2}} $. (c) Perturbed kernel $k_{1/2} = \kappa +\frac{1}{2} g_{I,J}$. (d) Invariant densities $f_0$ and $f_{1/2}$, for $\mathcal{L}_0$ and $\mathcal{L}_{1/2}$ respectively. }%
        \label{fig: asymmetric experiment}
\end{figure}

\section*{Code availability}
All material in the text and figures was produced by the authors using standard mathematical and numerical analysis tools. The only externally supplied code of this work consists of the implementation of the Simpson's rule for numerical integration by Damien Garcia (\href{https://www.mathworks.com/matlabcentral/fileexchange/25754-simpson-s-rule-for-numerical-integration}{link}), which we acknowledge.\\ The code for the numerical simulations performed in Section \ref{sec:numerics} is available at Zenodo (\href{https://doi.org/10.5281/zenodo.13820212}{https://doi.org/10.5281/zenodo.13820212}).\footnote{The code available on Zenodo performs the numerical experiments described in this work using $\Delta x = 8 \cdot 10^{-3}$ and $\Delta t = 2 \cdot 10^{-3}$. To reproduce the results of this work, i.e., with $\Delta x = 2 \cdot 10^{-3}$, the reader should modify the number of points in the spatial mesh at line 16 of the code.
}

\clearpage
\appendix

\renewcommand{\theequation}{A\arabic{equation}}
\setcounter{equation}{0}

\section{Recap of convex optimisation}\label{appendix}
In this section we recall some general result on the optimization of linear functions in convex sets adapted for our needs (see \cite{AFG22} for the proofs and other details).  
Let \(P\) be a bounded and convex subset of a Hilbert space \(\fH\).
\begin{definition}
\label{stconv}We say that a convex closed set $P\subseteq \fH$ is \emph{strictly convex} if for each pair $x,y\in P$ and for all $\gamma\in(0,1)$, the points $\gamma x+(1-\gamma)y\in \mathrm{int}(P)$, where \(\mathrm{int}(P)\) is the relative interior\footnote{The relative interior of a closed convex set $C$ is the interior of $C$ relative to the closed affine hull of $C$.} of \(P\).
\end{definition}

Let us briefly recall some relevant results from convex optimisation.\\
Suppose $\fH$ is a separable Hilbert space and $P\subset \fH$. Let $\mathcal{J}:\fH\rightarrow {\mathbb{R}}$ be a continuous linear
function. Consider the abstract problem to find $p_*\in P$ such that 
\begin{equation}
\mathcal{J}(p_*)=\max_{p\in P}\mathcal{J}(p) .  \label{gen-func-opt-prob}
\end{equation}%

The existence and uniqueness of an optimal perturbation follows from
properties of $P$ as stated in the following two propositions.

\begin{proposition}[Existence of the optimal solution]
\label{prop:exist} Let $P$ be bounded, convex, and closed in $\fH$.
Then problem~\eqref{gen-func-opt-prob} has at least one solution.
\end{proposition}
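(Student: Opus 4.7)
The plan is to use the standard functional-analytic argument for existence of maximizers of continuous linear functionals on bounded, closed, convex subsets of reflexive spaces, applied here to the Hilbert space $\fH$. The three ingredients are (i) weak compactness of bounded sets, (ii) weak closedness of closed convex sets, and (iii) weak continuity of continuous linear functionals.

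First I would take a maximizing sequence, i.e.\ pick $(p_n)_{n\in\bN}\subset P$ such that $\mathcal{J}(p_n)\to s:=\sup_{p\in P}\mathcal{J}(p)$. Since $P$ is bounded, the sequence $(p_n)_n$ is norm-bounded in $\fH$. Because $\fH$ is a Hilbert space (hence reflexive), the Banach--Alaoglu theorem guarantees that bounded sets are weakly relatively compact, and separability of $\fH$ ensures that the weak topology on bounded sets is metrizable, so we can extract a subsequence $p_{n_k}\rightharpoonup p_*$ for some $p_*\in\fH$.

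Next I would verify that $p_*\in P$. This is where Mazur's theorem enters: a convex subset of a normed space is norm-closed if and only if it is weakly closed. Since $P$ is convex and norm-closed, it is weakly closed, so the weak limit $p_*$ of the sequence $(p_{n_k})_k\subset P$ belongs to $P$.

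Finally, since $\mathcal{J}$ is a continuous linear functional on $\fH$, it is by definition weakly continuous, so $\mathcal{J}(p_{n_k})\to \mathcal{J}(p_*)$. Combined with $\mathcal{J}(p_{n_k})\to s$ this gives $\mathcal{J}(p_*)=s$, i.e.\ $p_*$ solves \eqref{gen-func-opt-prob}. There is no real obstacle here: the whole argument is a textbook application of weak compactness, and the only point worth emphasizing is why the Hilbert (or at least reflexive) setting is essential --- without reflexivity, bounded closed convex sets need not be weakly compact, and a maximizing sequence could fail to have a cluster point in $P$.
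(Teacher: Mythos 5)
Your argument is correct and complete: the chain of weak sequential compactness of bounded sets in the reflexive (Hilbert) space $\fH$, weak closedness of the norm-closed convex set $P$ via Mazur's theorem, and weak continuity of the continuous linear functional $\mathcal{J}$ is exactly the standard route, and the paper itself does not spell out a proof but defers it to the cited reference, where the same textbook weak-compactness argument is used. The only implicit assumption worth flagging is that $P$ is nonempty (needed for the supremum to be attained at all), which holds in all applications in the paper since $P$ contains the zero perturbation.
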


Upgrading convexity of the feasible set $P$ to strict convexity provides
uniqueness of the optimal solution.

\begin{proposition}[Uniqueness of the optimal solution]
\label{prop:uniqe} Suppose $P$ is closed, bounded, and strictly convex
subset of $\fH$, and that $P$ contains the zero vector in its
relative interior. If $\mathcal{J}$ is not uniformly vanishing on $P$, then the optimal solution to \eqref{gen-func-opt-prob} is unique.
\end{proposition}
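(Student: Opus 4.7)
The plan is to reduce the problem to the abstract convex-optimization framework recalled in Appendix~\ref{appendix}: Propositions~\ref{prop:exist} and~\ref{prop:uniqe} together show that a continuous linear functional attains a unique maximum on any closed, bounded, strictly convex subset of a separable Hilbert space whose relative interior contains the origin. Since $P\subset L^2(D\times D)$ already satisfies the geometric hypotheses by assumption, the task reduces to verifying that $\mathcal{J}:L^2(D\times D)\to \mathbb{R}$ is a continuous linear functional, and then invoking the abstract machinery.

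For linearity, I would rely on the explicit formula $R(\dot{\kappa})=(Id-\cL_0)^{-1}\dot{\cL}f_0$ given by Theorem~\ref{thm:res_diff}. The assignment $\dot{\kappa}\mapsto \dot{\cL}$ is manifestly linear from the definition \eqref{eq:diff_to}, post-composing with the bounded linear operator $(Id-\cL_0)^{-1}$ (Proposition~\ref{prop:bdd_res}) and then with the bounded linear functional $u\mapsto \int \phi\, u\, dx$ preserves linearity, so $\mathcal{J}$ is linear. For continuity, Lemma~\ref{lem:R_cty} provides a bound $\|R(\dot{\kappa})\|_1\leq C\|\dot{\kappa}\|_2$, and combining it with the estimate $\left|\int \phi\, R(\dot{\kappa})\, dx\right|\leq \|\phi\|_\infty\,\|R(\dot{\kappa})\|_1$, valid because $\phi\in L^\infty$ and $R(\dot{\kappa})\in L^1$, yields $|\mathcal{J}(\dot{\kappa})|\leq C\|\phi\|_\infty\, \|\dot{\kappa}\|_2$, which is the desired continuity.

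Having confirmed that $\mathcal{J}$ is a continuous linear functional on the Hilbert space $L^2(D\times D)$, I would apply Proposition~\ref{prop:exist} to produce a maximizer in $P$ and Proposition~\ref{prop:uniqe} to upgrade existence to uniqueness. The only delicate point is that Proposition~\ref{prop:uniqe} requires $\mathcal{J}$ not to vanish identically on $P$; this non-degeneracy hypothesis is tacitly needed in the statement (and appears explicitly in Proposition~\ref{prop:gen_cnvx}), since otherwise every element of $P$ is trivially optimal and uniqueness fails unless $P$ is a singleton. There is no serious obstacle beyond this bookkeeping: the substantive analytic content, namely the existence of the linear response operator $R$ and its continuity from $L^2(D\times D)$ into $L^1$, has already been assembled in Theorem~\ref{thm:res_diff} and Lemma~\ref{lem:R_cty}, so the final step is a direct invocation of the abstract results.
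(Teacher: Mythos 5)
Your proposal does not prove the statement in question; it proves a different one. The statement here is the abstract Proposition~\ref{prop:uniqe} itself --- the general fact that a continuous linear functional which is not uniformly vanishing on a closed, bounded, strictly convex set $P\subseteq\fH$ whose relative interior contains the origin attains its maximum over $P$ at a unique point. Your argument invokes Proposition~\ref{prop:uniqe} as a black box in its final step, which is circular. What you have actually written is, in substance, the proof of Proposition~\ref{prop:gen_cnvx}: the verification that $\mathcal{J}(\dot\kappa)=-\int\phi\,R(\dot\kappa)\,dx$ is a continuous linear functional on $L^2(D\times D)$ via Lemma~\ref{lem:R_cty} and Theorem~\ref{thm:res_diff}, followed by an appeal to the appendix. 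That verification is correct and matches how the paper proves Proposition~\ref{prop:gen_cnvx}, but it is not a proof of the proposition you were asked about. (For the record, the paper itself does not prove Proposition~\ref{prop:uniqe} either; it defers to \cite{AFG22}.)

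The missing content is the strict-convexity argument. Suppose $p_1\neq p_2$ both attain the maximum $M=\max_{p\in P}\mathcal{J}(p)$. By linearity, the midpoint $q=\tfrac12(p_1+p_2)$ also satisfies $\mathcal{J}(q)=M$, and by strict convexity $q$ lies in the relative interior of $P$. Because $0$ belongs to the relative interior of $P$, the closed affine hull of $P$ is a closed linear subspace $V$, and since every point of $P$ lies in $V$, the hypothesis that $\mathcal{J}$ is not uniformly vanishing on $P$ forces $\mathcal{J}|_V\not\equiv 0$; choose $v\in V$ with $\mathcal{J}(v)>0$. Since $q$ is interior to $P$ relative to $V$, we have $q+\epsilon v\in P$ for all sufficiently small $\epsilon>0$, and then $\mathcal{J}(q+\epsilon v)=M+\epsilon\,\mathcal{J}(v)>M$, contradicting maximality. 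Hence the maximizer is unique. Note that this is where the hypothesis $0\in$ relative interior of $P$ does real work: without it the affine hull need not be a linear subspace, $\mathcal{J}$ could be a nonzero constant on $P$, and every point of $P$ would be a maximizer.
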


Note that in the case when $\mathcal{J}$ is uniformly vanishing, all the elements of $P$ are solutions of the problem $\eqref{gen-func-opt-prob}.$

\subsection*{Declaration}

\subsubsection*{Acknowledgement}
The authors thank Carlangelo Liverani, Warwick Tucker, Andy Hammerlindl, Franco Flandoli, and Silvia Morlacchi for valuable discussions/suggestions. We also thank Caroline Wormell and Edoardo Lombardo for their help with numerical algorithms initially. G.D.S. was supported by the Italian national interuniversity PhD course in Sustainable Development and Climate Change for a part of the project, and acknowledges the current support of DFG project FOR 5528. S.J. was supported by  Università degli studi di Roma Tor Vergata for a part of this project, afterwards by ARC (DP220100492) and Monash University. S.G. and S.J. were funded by the PRIN grant “Stochastic properties of dynamical systems" (PRIN 2022NTKXCX), and extend their thanks to Centro de Giorgi and Scuola Normale Superiore for facilitating through a part of this project.

\subsubsection*{Conflict of interest} The authors have no relevant financial or non-financial conflict of interests to disclose.

\subsubsection*{Data availability} Data sharing not applicable to this article as no datasets were generated or analysed during the current study.
\clearpage
\bibliographystyle{plain}
 \bibliography{biblio}

\end{document}